\newtheorem{theorem}{Theorem}[section]
\newtheorem{proposition}[theorem]{Proposition}
\newtheorem{corollary}[theorem]{Corollary}
\newtheorem{remark}[theorem]{Remark}
\newtheorem{lemma}[theorem]{Lemma}
\newtheorem{example}[theorem]{Example}
\newtheorem{definition}[theorem]{Definition}
\numberwithin{equation}{section}
\begin{document}

\title[Lipeng Luo\textsuperscript{1}, Zhixiang Wu\textsuperscript{2}]{  Hopf action on vertex algebras}
\author{Lipeng Luo\textsuperscript{1} and Zhixiang Wu\textsuperscript{2}}

\address{\textsuperscript{1}Department of Mathematics, Zhejiang University, Hangzhou, Zhejiang Province,310027,PR China.}
\address{\textsuperscript{2}Department of Mathematics, Zhejiang University, Hangzhou, Zhejiang Province,310027,PR China.}

\email{\textsuperscript{1}luolipeng@zju.edu.cn}
\email{\textsuperscript{2}wzx@zju.edu.cn}

\keywords{Smash product, Hopf algebras, vertex algebras}
\subjclass[2010]{16S40, 16T05, 17B69}


\date{\today}
\thanks{ This work was supported by the National Natural Science Foundation of China (No. 11871421) and the Zhejiang Provincial Natural Science Foundation of China (No. LQ16A010011).}

\begin{abstract}
In present paper,   some properties of Hopf action  on vertex algebras will be given. We prove that $V\#H$ is an $\mathcal{S}$-local vertex algebra but it is not a vertex algebra when $V$ is an $H$-module vertex algebra.   In addition, we extend the $\mathcal{S}$-locality to the quantum vertex algebras.
\end{abstract}

\footnote{The second author is the corresponding author.}
\maketitle

\section{Introduction}\label{intro}

Vertex algebras, which were introduced by Borcherds in \cite{Bor1}, have close connections to Lie conformal algebras, infinite-dimensional Lie algebras satisfying the locality property in \cite{KacL}.  Vertex (operator) algebras are analogous to classical Lie algebras and associative algebras in many aspects, though there are also significant differences in many aspects. For a vertex (operator) algebra $V$, one  can study the action of a vertex (operator) algebra by a Hopf algebra $H$. On this aspect,  Manson and Dong have established the Galois theory of  vertex operator algebras, which is similar to  the classical Galois theory of fields action by groups (see \cite{DM}). Like to the Galois theory of fields related to fields and groups was generalized to the Galois theory related associative algebras and Hopf algebras, the previous Galois theory about vertex operator algebras and groups can be generalized to a theory about vertex operator algebras and Hopf algebras. At first step, one need to introduce the concept of $H$-module vertex operator algebras. This was completed in \cite{DW, HW} by Dong and Wang. They also introduced $H$-module vertex algebras for a Hopf algebra $H$ and a smash product  $V\#H$ for  an  $H$-module vertex operator algebra $V$.  The next step is to study the relations between $V^H$ (the fixed vertex operator subalgebra under the action of $H$), $V$ and $V\#H$.  However, the smash is not a vertex operator algebra but an $\mathcal{S}$-local vertex operator algebra.  The $\mathcal{S}$-locality is defined as follow. 
For any $a,b \in V$, there exists $n,m \in  \mathbb{Z^+}$, and $a^i,b^i \in V, i=1,2,...,m$ such that 
\begin{eqnarray}\label{eq1.1}
(z-w)^nY(a,z)Y(b,w)=(z-w)^n\sum_{i=1}^mY(b^i,w)Y(a^i,z)
\end{eqnarray}
holds. An $\mathcal{S}$-local vertex algebra satisfies all the axioms for vertex algebras except that the locality is replaced by identity (\ref{eq1.1}).
The $\mathcal{S}$-local vertex algebra is similar to $\mathcal{S}$-local vertex operator algebra introduced in \cite{DW, HW}. 
The final step is to introduce the $H$-extensions of vertex operator algebras. In present paper, we continue to study $V^H$ and $V\# H$ for the preparation of concept of $H$-extensions of vertex operator algebras.
We prove that an $\mathcal{S}$-local vertex algebra $V$ can induce a unital associative algebra $A(V)$ with respect to the $f$-product, which is introduced in \cite{DGK, EH}. The smash product $V\#H$ of a Hopf algebra $H$ and an $H$-module vertex algebra $V$ was constructed. We prove that $V\#H$ is an $\mathcal{S}$-local vertex algebra. Moreover, $A(V\#H)\cong A(V)\#H$, the smash product of $H$ with its module algebra $A(V)$. 

The rest of the paper is organized as follows. In Section 2, we introduce some basic definitions, notations, and related known results about vertex algebras from \cite{BV, DGK, HL3}, and propose the definition of $\mathcal{S}$-local vertex algebras and all kinds of modules over them. More interesting is that we prove that an $\mathcal{S}$-local vertex algebra $V$ can induce a unital associative algebra $A(V)$ with respect to the $f$-product, which is introduced in \cite{EH}. In Section 3, Hopf action on field algebras and some basic
properties of this action are characterized. For a Hopf algebra $H$ and an $H$-module field algebra $V$, the construction of the smash product $V\#H$  are characterized. Furthermore, we prove that $V\#H$ is still a field algebra. In Section 4, the smash product $V\#H$ of a Hopf algebra $H$ and an $H$-module vertex algebra $V$ was constructed. We prove that $V\#H$ is an $\mathcal{S}$-local vertex algebra. Moreover, the connection between a $V$-module $M$, which is also an $H$-module and a $V\#H$-module was characterized when $V$ is an $H$-module vertex algebra. In addition, we prove that $A(V\#H)\cong A(V)\#H$, which is the smash product of $H$ with its module algebra $A(V)$. In Section 5, we extend the $\mathcal{S}$-locality to the quantum vertex algebras. The construction of $\mathcal{S}$-local quantum vertex algebra were determined.

Throughout this paper, we use notations ${\bf k}$, $\mathbb{Z}$ and $\mathbb{Z^{+}}$ to represent a field of characterization zero, the ring of integers and the set of nonnegative integers, respectively. In addition, all vector spaces and tensor products are over ${\bf k}$. In the absence of ambiguity, we abbreviate $\otimes_{{\bf k}}$ to $\otimes$.

\section{Basic on filed algebras}

Vertex algebras are field algebras with some local conditions.
 In this section, we recall some basic definitions, notations and related results about field algebras and their representations for later use. 
 
 Let ${\bf k}$ be a field of characterization zero and $R={\bf k}[s]$ the polynomial ring with variable $s$. Then $R$ is a Hopf algebra with  the coproduct determined by with $\Delta(s)=s\otimes 1+1\otimes s$ and the  counit determined by $\varepsilon(s)=0$.  
For any vector space $W$ over ${\bf k}$, let $W[[z,z^{-1}]]:=\{\sum\limits_{n\in \mathbb{Z}}w_nz^{-n-1}|w_n\in W\}$, $W((z)):=\{\sum\limits_{i=-n}^{+\infty}w_iz^i| w_i\in W$ for some positive integer $n\}$ and $W[[z]]:=\{\sum\limits_{n=0}^{+\infty}w_nz^n|w_n\in W\}$. The formal delta function is defined to be
$$\delta(z,w)=\sum\limits_{n\in\mathbb{Z}}z^{-n-1}w^n\in{\bf k}[[z^{\pm1},w^{\pm1}]].$$
The ring morphism $$i_{z,w}:{\bf k}[[z,w]][z^{-1},w^{-1},(z-w)^{-1}]\to{\bf k}((z))((w))$$is defined by Taylor expansion of $(z-w)^n$ in positive power of $w$.

Now, we recall the definition of a field algebra and its properties following \cite{BV, DGK}.

\begin{definition}\label{def2.31}(Ref. \cite{BV, DGK})
\begin{em}
Suppose $V$ is a left $R$-module and ${\bf 1}\in V$ a fixed element. A \emph{state-field correspondence} on a pointed vector space $V$ is a linear map $Y: V\otimes V \to V((z))$, $a \otimes b \mapsto Y(a,z)b=\sum_na_nbz^{-n-1}$ and the action of $s$ on $V$, satisfying the following axioms:
\begin{enumerate}[(i)]
\item (Vacuum axioms) $Y({\bf 1},z)a=a$, and $Y(a,z){\bf 1} =a+(sa)z+\cdots\in V[[z]]$ for any $a \in V$.
\item (Translation convariance) $sY(a,z)b-Y(a,z)sb=Y(sa,z)b=\partial_zY(a,z)b$, where $sa:=\partial_zY(a,z){\bf 1} |_{z=0}$.
\end{enumerate}
The action of $s$ on $V$ induces a linear endomorphism of $V$, usually denoted by  $T$, is called the \emph{translation operator}.

\end{em}		
\end{definition}

Let $V$ be a unital algebra with a unit element ${\bf 1}$, and let $s$ be a derivation of $V$. Define 
\begin{align}
Y(a,z)b=(e^{zs}a)b, \quad a,b \in V.
\end{align}
It is easy to check that $(V,{\bf 1}, s)$ is a state-field correspondence. Actually, all state-field correspondences for which the fields $Y(a,z)$ are formal power series in $z$ are obtained in this way. We call such a state-field correspondence \emph{trivial}.

\begin{definition}\label{def2.32}(Ref. \cite{BV, DGK})
\begin{em}
Let $(V,{\bf 1})$ be a pointed vector space with a state-field correspondence $Y$. Then $V$ is called a \emph {field algebra} (which is equivalent to the notion of \emph {nonlocal vertex algebra} in \cite{HL3}) if $Y$ satisfies the \emph{associativity relation}:
for any $a,b,c\in V$, there exists $n \in  \mathbb{Z^+}$ such that 
\begin{eqnarray}\label{eq2.32}
(z-w)^ni_{z,w}Y(a,z-w)Y(b,-w)c=(z-w)^nY(Y(a,z)b,-w)c
\end{eqnarray}
holds. 

A strong field algebra is a field algebra satisfying $n$-th product axiom
$$Y(a_{n}b,z)=Y(a,z)_{n}Y(b,z)$$
for any $n\in\mathbb{Z}$, where 
$$Y(a,z)_{m}Y(b,z)=Res_x [Y(a,x),Y(b,z)](x-z)^m,$$
$$Y(a,z)_{-m-1}Y(b,z)=:\partial^m_zY(a,z)Y(b,z):/m!,$$
for any $m\in\mathbb{Z}^+$.

Here $:\  :$ stands for the normal ordered product (or  Wick product) given by 
$$:Y(a,z)Y(b,z):=Y(a,z)_+Y(b,z)+Y(b,z)Y(a,z)_-,$$
where $$Y(a,z)_+=\sum\limits_{j\leq -1}a_{(j)}z^{-j-1}, \qquad  Y(a,z)_-=\sum\limits_{j\geq 0}a_{(j)}z^{-j-1}.$$

\end{em}		
\end{definition}

Let $Y$ be a state-field correspondence on a pointed vector space $(V,{\bf 1})$. Define 
\begin{align}\label{eq2.33}
Y^{op}(a,z)b=e^{zs}Y(b,-z)a.
\end{align}
It is not difficult to check that $Y^{op}$ is also a state-field correspondence. About the state-field correspondence, we also have the following properties followed in \cite{DGK}.
\begin{proposition}\label{def2.34}(Ref. \cite{BV, DGK})
Let $Y$ be a state-field correspondence on a pointed vector space $(V,{\bf 1})$. Then $Y$ satisfies the associativity relation (\ref{eq2.32}) if and only if all pairs $(Y(a,z)$,$Y^{op}(b,z))$ are local on each $c \in V$, i.e., there exists $n \in  \mathbb{Z^+}$ such that
\begin{align}\label{eq2.34}
(z-w)^n[Y(a,z),Y^{op}(b,w)]c=0.
\end{align}
		
\end{proposition}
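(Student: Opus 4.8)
The plan is to reduce both conditions to one formal identity obtained by conjugating the translation exponential through $Y(a,z)$: after that conjugation the commutator $[Y(a,z),Y^{op}(b,w)]c$ becomes exactly the difference of the two sides of the associativity relation (\ref{eq2.32}) written for the triple $(a,c,b)$, and the proposition then falls out upon quantifying over all $a,b,c$.

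First I would record the conjugation formula
$$Y(a,z)\,e^{ws}=e^{ws}\,i_{z,w}Y(a,z-w),$$
valid as an identity of operators on $V$, extended coefficientwise to formal series. It follows from translation covariance in the standard way: since $[s,Y(a,z)]=\partial_z Y(a,z)$, the operator $F(w):=e^{-ws}Y(a,z)e^{ws}$ satisfies $\partial_w F=-\partial_z F$ with $F(0)=Y(a,z)$, so $F(w)=\sum_{k\ge 0}\tfrac{(-w)^k}{k!}\partial_z^k Y(a,z)$, and a direct comparison of coefficients identifies this series with $i_{z,w}Y(a,z-w)$. From (\ref{eq2.33}) we also have $Y^{op}(b,w)c=e^{ws}Y(c,-w)b$ and, extended coefficientwise in $z$, $Y^{op}(b,w)\big(Y(a,z)c\big)=e^{ws}Y\big(Y(a,z)c,-w\big)b$.

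Next I would compute, using the conjugation formula,
$$Y(a,z)Y^{op}(b,w)c=Y(a,z)\,e^{ws}Y(c,-w)b=e^{ws}\,i_{z,w}Y(a,z-w)Y(c,-w)b,$$
and hence
$$[Y(a,z),Y^{op}(b,w)]c=e^{ws}\Big(i_{z,w}Y(a,z-w)Y(c,-w)b-Y\big(Y(a,z)c,-w\big)b\Big).$$
All of the series appearing here lie in one common space (formal series in $z^{\pm1}$ whose every $z$-coefficient is a Laurent series in $w$) on which $e^{ws}$ acts invertibly, coefficientwise in $z$, and commutes with multiplication by the polynomial $(z-w)^n$. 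Peeling off $e^{ws}$ therefore gives, for fixed $a,b,c$,
$$(z-w)^n[Y(a,z),Y^{op}(b,w)]c=0\ \Longleftrightarrow\ (z-w)^n i_{z,w}Y(a,z-w)Y(c,-w)b=(z-w)^n Y\big(Y(a,z)c,-w\big)b,$$
and the right-hand equality is precisely the associativity relation (\ref{eq2.32}) for the triple $(a,c,b)$. Since on both sides one only asks for the existence of such an $n$, and since $(a,c,b)$ runs over all ordered triples exactly as $(a,b,c)$ does while $c$ runs over all of $V$, this equivalence---quantified over all $a,b,c$---yields both implications of the proposition.

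I do not expect a conceptual obstacle here; the whole content is this one identity. The step that needs genuine care is the formal bookkeeping in the third paragraph: $Y(a,z)Y^{op}(b,w)c$ and $Y^{op}(b,w)Y(a,z)c$ a priori live in different completions of $V[[z^{\pm1},w^{\pm1}]]$, so one must check that, individually, $i_{z,w}Y(a,z-w)Y(c,-w)b$ and $Y\big(Y(a,z)c,-w\big)b$ both lie in the space described above (each has $w$-support bounded below in each fixed $z$-degree), that the conjugation identity is valid there, and that $e^{ws}$ may be cancelled after multiplication by $(z-w)^n$. Once this is verified, the equivalence is immediate.
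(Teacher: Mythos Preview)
The paper does not supply its own proof of this proposition: it is stated with attribution to \cite{BV, DGK} and left unproved. Your argument is correct and is exactly the standard one found in those references (see in particular \cite{BV}, Proposition~4.2): conjugate $Y(a,z)$ past $e^{ws}$ using translation covariance, identify the commutator $[Y(a,z),Y^{op}(b,w)]c$ with $e^{ws}$ times the difference of the two sides of (\ref{eq2.32}) for the triple $(a,c,b)$, and then cancel the invertible operator $e^{ws}$. Your identification of the one genuinely delicate point---that the two individual terms live in a common space of formal series on which $e^{ws}$ acts invertibly and commutes with multiplication by $(z-w)^n$---is accurate and is precisely what the cited references spend their effort on; nothing is missing from your outline.
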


 Similar to the opposite state-field correspondence, if $(V, {\bf 1}, Y)$ is a field algebra, then $(V, {\bf 1}, Y^{op})$ is also a field algebra. Now, we give the definition of vertex algebra.
\begin{definition}\label{def2.3}(Ref. \cite{BV, DGK})
\begin{em}
A vertex algebra is a pointed vector space $(V,{\bf 1})$ with a state-field correspondence $Y: V\otimes V \to V((z))$, such that each pair of $(Y(a,z)$,$Y(b,z))(a,b \in V)$ is local, i.e., there exists $n \in  \mathbb{Z^+}$ such that
\begin{align}\label{eq2.3}
(z-w)^n[Y(a,z),Y(b,w)]=0.
\end{align}
\end{em}		
\end{definition}

It was proved that a vertex algebra $(V, {\bf 1}, Y)$ is a strong field algebra with $Y(a,z)b=e^{zs}Y(b,-z)a$ for any $a,b\in V$. Moreover, a field algebra is a vertex algebra if and only if $Y(a,z)b=e^{zs}Y(b,-z)a$ for any $a,b\in V$. 

\begin{example}
Suppose that $(V, {\bf 1}, Y)$ is a field algebra and $H$ is a Hopf algebra. Then $V\otimes H$ is a field algebra with ${\bf 1} \otimes 1$ as vacuum vector, $s(a\otimes h)=sa\otimes h$ and $$Y^H(a\otimes f,z)(b\otimes h)= Y(a,z)b\otimes fh=\sum\limits_{n\in\mathbb{Z}}a_{n}b\otimes fhz^{-n-1}.$$  If $(V,Y,{\bf 1})$ is a strong field algebra, then $(V\otimes H,Y^H,{\bf 1} \otimes 1)$ is a strong field algebra.
Furthermore, if $(V,Y,{\bf 1})$ is a vertex algebra and $H$ is a commutative Hopf algebra, then $(V\otimes H,Y^H,{\bf 1} \otimes 1)$ 
is a vertex algebra.
\end{example}

Actually, a vertex algebra is simply a field algebra (nonlocal vertex algebra) that satisfies the locality identity (\ref{eq2.3}). By generalizing the local property, we can obtain the definition of weak quantum vertex algebra in \cite{HL3}.
\begin{definition}\label{def2.2}
\begin{em}
A field algebra $V$ is called a \emph {weak quantum vertex algebra} if it also satisfies the following weak $\mathcal{S}$-locality: 
for any $a,b \in V$, there exists $m \in  \mathbb{Z^+}$, and $f_i(x)\in {\bf k}((x)), a^i,b^i \in V, i=1,2,...,m$ such that
\begin{eqnarray}\label{eq2.2}
(z-w)^nY(a,z)Y(b,w)=(z-w)^n\sum_{i=1}^mf_i(z-w)Y(b^i,w)Y(a^i,z)
\end{eqnarray}
holds for some nonnegative integer $n$.

In particular, $V$ is called an \emph {$\mathcal{S}$-local vertex algebra} if it satisfies the following $\mathcal{S}$-locality identity instead of weak $\mathcal{S}$-locality (\ref{eq2.2}):
\begin{eqnarray}\label{eq2.4}
(z-w)^nY(a,z)Y(b,w)=(z-w)^n\sum_{i=1}^mY(b^i,w)Y(a^i,z).
\end{eqnarray}

\end{em}		
\end{definition}

\begin{remark}\label{rm2.5}
\begin{em}
(1) It is revealed in \cite{HL1} that identity (\ref{eq2.32}) together with(\ref{eq2.4}) implies the following two identities.
\begin{align}
x^{-1}&\delta(\frac{z-w}{x})Y(a,z)Y(b,w)-x^{-1}\delta(\frac{-w+z}{x})\sum_{i=1}^mY(b^i,w)Y(a^i,z) \nonumber \\
&=z^{-1}\delta(\frac{w+x}{z})Y(Y(a,x)b,w),\label{eq2.5}\\ 
Y(a&,z)b=e^{zs}\sum_{i=1}^mY(b^i,-z)a^i. \label{eq2.6}
\end{align}
(2) $\mathcal{S}$-local vertex algebras are special examples of weak quantum vertex algebras defined in \cite{HL3}. 
\end{em}
\end{remark}


 Now, we consider an $\mathcal{S}$-local vertex algebra induced from $V$.

 \begin{example} \label{ex2.8}
 Suppose that $V$ is a vertex algebra. For any $n \in \mathbb{Z^+}$, set $M(n, V)=V \otimes M(n, {\bf k})$, where $M(n, {\bf k})$ is the full matrix algebra of order $n$ over ${\bf k}$. Then $M(n, V)$ is a field algebra with
 \begin{align}\label{eq5.3}
 Y^{M(n, V)}(A,z)B=(Y(v_{ij},z))B,
 \end{align}
 where $A=(v_{ij})_{i,j=1}^n, B=(u_{ij})_{i,j=1}^n$, and  $v_{ij}, u_{ij} \in V$.
 In particular, $(M(n, V), Y^{M(n, V)}, {\bf 1}  E_n, sE_n)$ is an $\mathcal{S}$-local vertex algebra. We use $E_n$ to denote the identity matrix and $u(a_{ij})_{i,j=1}^n=(a_{ij}u)_{i,j=1}^n$ for $u\in V, a_{ij} \in {\bf k}$.
 \end{example}

In \cite{DW, HW}, Dong and Wang investigated an associative algebra $A_n(V)$, which was induced from an ($\mathcal{S}$-local) vertex operator algebra $V$, under the multiplication $*_n$ with unite $[{\bf 1}]$. 
\begin{proposition}(Ref. \cite{DW, HW})\label{pro2.9}
Let $(V, Y, {\bf 1}, \omega)$ be an ($\mathcal{S}$-local) vertex operator algebra. For any $a,b \in V$ homogeneous, and $n \in \mathbb{Z}^+$, define \begin{align*}
&a{*_n}b=\sum_{m=0}^n(-1)^m\tbinom{m+n}{m} Res_zY(a,z)b\frac{(1+z)^{wtu+n}}{z^{n+m+1}},\\
&a{\circ_n}b=Res_zY(a,z)b\frac{(1+z)^{wtu+n}}{z^{2n+2}},
\end{align*}
extend $*_n, \circ_n$ to $V$ by bilinearity. Set $O_n(V)=Span\{ a{\circ_n}b, L(-1)a+L(0)a |  \forall a,b \in V\}$. Then the quotient $A_n(V)=V/O_n(V)$ is an associative algebra with respect to multiplication $*_n$ and $[{\bf 1}]$ is a unit.
\end{proposition}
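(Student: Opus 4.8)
The plan is to mirror Zhu's construction of $A_0(V)$ and its extension to arbitrary $n$ by Dong--Li--Mason, re-deriving every residue identity from the associativity relation (\ref{eq2.32}) and the $\mathcal{S}$-locality identity (\ref{eq2.4}) --- equivalently, from the delta-function identity (\ref{eq2.5}) that packages the two --- in place of ordinary locality. The first step is a family of reduction lemmas describing $O_n(V)$: for homogeneous $a$ and arbitrary $b$, each residue $Res_z\, Y(a,z)b\,(1+z)^{wt\,a+n+r}z^{-2n-2-r}$ with $r\ge 0$ lies in $O_n(V)$, and, for homogeneous $c$, one has $L(-1)c\equiv -(wt\,c)\,c \pmod{O_n(V)}$, so that $L(-1)$ applied to any coefficient of a field $Y(a,z)b$ may be traded for a scalar modulo $O_n(V)$. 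These follow by applying $L(-1)+L(0)$ to $a\circ_n b$ and manipulating inside the residue through translation covariance $Y(L(-1)a,z)=\partial_z Y(a,z)$ and integration by parts; this step is routine and does not feel the difference between locality and $\mathcal{S}$-locality.

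The second step is to show that $O_n(V)$ is a two-sided ideal for $*_n$. For left absorption, $a*_n(b\circ_n c)\in O_n(V)$ and $a*_n(L(-1)b+L(0)b)\in O_n(V)$, I would expand the iterated residue defining $*_n$, use the associativity relation to rewrite $Y(a,w)Y(b,z)c$ as an expression in $Y(Y(a,w-z)b,z)c$, and match the outcome coefficient by coefficient against a combination of $\circ_n$-products and $(L(-1)+L(0))$-terms, invoking the reduction lemmas. For right absorption, $(b\circ_n c)*_n a\in O_n(V)$ and $(L(-1)b+L(0)b)*_n a\in O_n(V)$, the $\mathcal{S}$-locality identity is used essentially: to move $Y(b,z)$ (or its relevant products) to the right of $Y(a,w)$ one applies (\ref{eq2.4}), replacing $Y(b,z)Y(a,w)$ by $\sum_i Y(a^i,w)Y(b^i,z)$, after which one checks that the resulting sum of $\circ_n$-type residues still lies in $O_n(V)$ --- which it does because the reduction lemmas hold for every vector, each $b^i$ included. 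Associativity in the quotient, $(a*_n b)*_n c\equiv a*_n(b*_n c)\pmod{O_n(V)}$, I would obtain by the same mechanism: write both sides as iterated residues, use the associativity relation to route each through $Y(Y(a,x)b,w)$ acting on $c$, expand the $(1+x)$-type factors by the binomial theorem, and read off that the difference is a sum of left and right $\circ_n$-multiples.

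The last step is that $[{\bf 1}]$ is a unit. The equality ${\bf 1}*_n a=a$ is immediate from $Y({\bf 1},z)=\mathrm{id}_V$, which leaves only the coefficient $a$. For $a*_n{\bf 1}$ I would use the vacuum axiom $Y(a,z){\bf 1}=e^{zL(-1)}a$ to write $a*_n{\bf 1}$ as an explicit finite sum $\sum_{m\ge 0}(-1)^m\tbinom{m+n}{m}\sum_{j\ge 0}\frac{1}{j!}\,c_{m,j}\,L(-1)^j a$ with computable coefficients $c_{m,j}$, then apply $L(-1)c\equiv -(wt\,c)\,c\pmod{O_n(V)}$ inductively to collapse each $L(-1)^j a$ to a scalar multiple of $a$, and verify by a short generating-function identity that the scalars sum to $1$, so $a*_n{\bf 1}\equiv a$. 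The hard part --- and the only place where the argument genuinely leaves the territory of the local construction --- will be the right-ideal property together with the associativity computation: both demand a careful simultaneous use of the associativity relation and $\mathcal{S}$-locality, and in particular control over the auxiliary vectors $a^i,b^i$ produced by (\ref{eq2.4}) once they are fed back through the reduction lemmas.
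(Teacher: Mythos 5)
First, a point of reference: the paper does not prove this proposition at all --- it is quoted from \cite{DW, HW} --- so the only meaningful comparison is with the proofs there, which indeed follow the Zhu/Dong--Li--Mason pattern you outline. Within that pattern, your reduction lemmas and unit computation are fine: the diagonal family $Res_z\,Y(a,z)b\,(1+z)^{wt\,a+n+r}z^{-2n-2-r}\in O_n(V)$ does generate, by binomial expansion and induction on the discrepancy of exponents, the full family with independently chosen powers that the later computations need, and $L(-1)c\equiv-(wt\,c)\,c$ is immediate because $(L(-1)+L(0))c$ lies in $O_n(V)$ by definition (no manipulation of $a\circ_n b$ is required).

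The genuine gap sits exactly at the point you flag as ``the hard part'' and then pass over: the interaction of $\mathcal{S}$-locality with the weight-dependent exponents. Every residue entering $*_n$ and $\circ_n$ carries a factor $(1+z)^{wt\,a+n}$ determined by the weight of the left input. When you invoke (\ref{eq2.4}) to replace $Y(b,z)Y(a,w)$ by $\sum_i Y(a^i,w)Y(b^i,z)$ in the right-ideal argument, or the skew-symmetry substitute (\ref{eq2.6}) in the analogue of Zhu's lemma expressing $u*_n v$ through fields $Y(v,z)u$, the exponents in the integrand are still those attached to $a$ and $b$, while the fields now involve $a^i,b^i$. To recognize the outcome as a combination of $\circ_n$-products, so that your reduction lemmas apply, you need the $a^i,b^i$ to be homogeneous with $wt\,a^i=wt\,a$ and $wt\,b^i=wt\,b$ (or at least a graded refinement of (\ref{eq2.4})). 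The bare identity (\ref{eq1.1})/(\ref{eq2.4}) gives no control whatsoever on these weights, so the assertion that membership in $O_n(V)$ follows ``because the reduction lemmas hold for every vector, each $b^i$ included'' does not close the argument: the mismatch is in the exponents, not in which vectors the lemmas apply to. In \cite{DW, HW} this difficulty is absent because the $\mathcal{S}$-locality of an $H$-module vertex operator algebra has the specific form $a^i=a$, $b^i=h_{1}b$ with $H$ acting by weight-preserving operators, so all weights match automatically. Your proof must either build this homogeneity into the definition of ``$\mathcal{S}$-local vertex operator algebra'' or derive it from the structure at hand; as written it does neither, and consequently the right-ideal property and the associativity of $*_n$ modulo $O_n(V)$ --- the two steps that carry the whole statement --- remain unproved.
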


As mentioned above, it is not difficult to think that we can do the similar thing in an ($\mathcal{S}$-local) vertex algebra. However, we are committed to get a new algebra structure by changing the multiplication $*_n$ in $V$. Ekeren and Heluani determined another associative quotients of vertex algebras in \cite{EH} as follows:
\begin{proposition}(Ref. \cite{EH})\label{pro2.10}
Let $(V, Y, {\bf 1}, s)$ be a vertex algebra. For any $a,b \in V$, and $f(z)=z^{-1}$, or $f(z)=c\frac{e^{cz}}{e^{cz}-1}$ for some $c\neq 0$, and $g=\partial f$, we define the $f$-product
\begin{align}\label{eq2.7}
a_{(f)}b=Res_zf(z)Y(a,z)b.
\end{align}
Set $V_{(g)}V=\langle a_{(g)}b | \forall a,b \in V\rangle$. Then the quotient $A(V)=V/V_{(g)}V$ is an associative algebra with respect to the $f$-product and $[{\bf 1}]$ is a unit.
\end{proposition}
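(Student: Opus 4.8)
The plan is to follow the strategy of Ekeren--Heluani in \cite{EH} but streamline it by isolating the single algebraic identity on $f$-products that makes everything work. First I would record the elementary behavior of the operation $a_{(f)}b=\operatorname{Res}_zf(z)Y(a,z)b$ under the translation operator $s$: using translation covariance $Y(sa,z)b=\partial_zY(a,z)b$ and $sY(a,z)b-Y(a,z)sb=\partial_zY(a,z)b$, together with integration by parts for residues ($\operatorname{Res}_z\partial_z(\cdots)=0$), one gets $(sa)_{(f)}b=-a_{(\partial f)}b=-a_{(g)}b$ and $s(a_{(f)}b)-a_{(f)}(sb)=a_{(g)}b$. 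In particular $(sa)_{(f)}b\in V_{(g)}V$, and modulo $V_{(g)}V$ the operator $s$ acts trivially on the left and, more importantly, the image of $V_{(g)}V$ is an ideal for the relevant products so that the quotient map is compatible with $*:=\,_{(f)}$-multiplication. The vacuum axiom $Y({\bf 1},z)a=a$ gives ${\bf 1}_{(f)}a=\operatorname{Res}_zf(z)a$, which equals $a$ when $f(z)=z^{-1}$ (and more generally the constant term of $f$ is $1$ in the exponential case), so $[{\bf 1}]$ is a left unit; the right unit property $a_{(f)}{\bf 1}\equiv a$ follows from the skew-type relation once associativity is in hand, or directly from $Y(a,z){\bf 1}\in V[[z]]$ together with the $s$-identities above.

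The core of the argument is associativity: I must show $(a_{(f)}b)_{(f)}c\equiv a_{(f)}(b_{(f)}c)\pmod{V_{(g)}V}$. Here I would invoke the Borcherds/Jacobi identity for vertex algebras — available since $V$ is genuinely a vertex algebra, not merely $\mathcal{S}$-local — in its iterate form, namely the associativity relation $(z-w)^nY(Y(a,z)b,-w)c=(z-w)^n i_{z,w}Y(a,z-w)Y(b,-w)c$ from \eqref{eq2.32}, or equivalently the commutator formula expressing $Y(a,z)Y(b,w)-Y(b,w)Y(a,z)$ in terms of $\delta$-functions. The computation of both $(a_{(f)}b)_{(f)}c$ and $a_{(f)}(b_{(f)}c)$ produces iterated residues of $Y(a,\cdot)Y(b,\cdot)c$ against products and substitutions of $f$; the difference is expressible as a residue of $Y(a,z)(\text{something}_{(g)}\text{-type})$ plus terms of the form $(\cdots)_{(g)}(\cdots)$, which is exactly what is killed in the quotient. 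The key place where the specific choice of $f$ enters is the functional equation: for $f(z)=z^{-1}$ one has $f(z+w)=f(z)+f(w)$ only up to lower-order terms, and for $f(z)=c\,e^{cz}/(e^{cz}-1)$ one has the genuine identity relating $f(z+w)$, $f(z)$, $f(w)$, $g=f'$ (essentially $f(z)f(w)=f(z+w)(f(w)-f(z))+\cdots$ up to $g$-terms), and it is precisely this that converts the associator into $V_{(g)}V$.

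The main obstacle I anticipate is organizing this associativity computation cleanly. There are two moving parts that must be handled simultaneously: (i) commuting a residue past the vertex operator $Y(a,z)$ in the expression $(a_{(f)}b)_{(f)}c$ requires the locality/associativity of $V$ to move $Y(Y(a,z)b,w)c$ into the region where the expansion $i_{z,w}$ is valid, introducing $(z-w)^{-N}$ poles that must be controlled; and (ii) the chosen $f$'s functional equation only holds modulo terms involving $g$, so one has to track those correction terms carefully and confirm they all land in $\langle a_{(g)}b\rangle$. I would set up the bookkeeping by writing $F(z,w)$ for the two-variable kernel appearing in each associator, decomposing $F(z,w)=f(z-w)f(w)$-type expressions via the functional equation into a principal part (which matches the other associator) plus a remainder proportional to $g$, and then using the $s$-identities from the first paragraph to recognize the $g$-remainder as an element of $V_{(g)}V$. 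Once associativity and the unit are established, bilinearity is automatic from the bilinearity of $Y$, and the proof is complete; I would also remark that the two cases of $f$ can be treated uniformly by noting $g=\partial f$ satisfies $g=-f^2+cf$ (with $c=0$ in the first case), which is the single identity that drives the whole argument.
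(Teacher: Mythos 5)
Your proposal follows essentially the same route as the paper, which simply invokes the Ekeren--Heluani construction \cite{EH} for this statement and reproduces its machinery in the proof of Theorem \ref{tm2.6}: the Borcherds/Jacobi identity plus residue bookkeeping, with the special property of $f$ (your Riccati identity $g=\partial f=-f^2+cf$, equivalently the closure condition $f(z)\partial_z^{(j)}g(-z),\,g(z)\partial_z^{(j)}f(z)\in\langle\partial_z^{(k)}g(z)\mid k\in\mathbb{Z^+}\rangle$ used there) forcing both the ideal property of $V_{(g)}V$ and the associator into $V_{(g)}V$, and $\operatorname{Res}_z f(z)=1$ giving the unit. Only harmless slips: the translation identity should read $s(a_{(f)}b)-a_{(f)}(sb)=-a_{(g)}b$, and for $f(z)=z^{-1}$ the relevant functional equation is not approximate additivity of $f$ but the exact identity $f(z)f(w)=f(z+w)\bigl(f(z)+f(w)-c\bigr)$ with $c=0$.
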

As we described in Definition\ref{def2.2}, we can obtain a new algebra by changing the locality identity. Inspired by \cite{DLM,EH,HW,YZ}, we can obtain the following theorem induced by Proposition \ref{pro2.9} and \ref{pro2.10}.

\begin{theorem}\label{tm2.6}
Let $(V, Y, {\bf 1}, s)$ be an $\mathcal{S}$-local vertex algebra. Then the quotient $A(V)=V/V_{(g)}V$ is an associative algebra with respect to the $f$-product and $[{\bf 1}]$ is a unit, where all the other conditions are the same as described in Proposition \ref{pro2.10}.
\end{theorem}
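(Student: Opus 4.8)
The plan is to mirror the proof of Proposition \ref{pro2.10} from \cite{EH}, checking at each step where the hypothesis of locality was used and verifying that $\mathcal{S}$-locality suffices as a replacement. First I would recall that the key structural input for the associativity of the $f$-product is the Borcherds-type identity; in the $\mathcal{S}$-local setting this is precisely identity (\ref{eq2.5}) from Remark \ref{rm2.5}, together with the skew-symmetry relation (\ref{eq2.6}). So the first step is to extract from (\ref{eq2.5}), by taking appropriate residues in $x$ and $z$, the commutator and iterate formulas in the form
\begin{align*}
a_{(f)}(b_{(f)}c)-\sum_{i=1}^m b^i_{(f)}(a^i_{(f)}c)=\big((a_{(f)}b)\big)_{(f)}c + (\text{correction terms}),
\end{align*}
where the correction terms involve $\partial f = g$ and hence lie in $V_{(g)}V$. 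The point is that in an ordinary vertex algebra one gets $a_{(f)}(b_{(f)}c)-b_{(f)}(a_{(f)}c)\in V_{(g)}V$, whereas here the commutator is replaced by the $\mathcal{S}$-local sum $\sum_i b^i_{(f)}(a^i_{(f)}c)$, which is exactly what the associative algebra $A(V)$ needs since the commutator is not required to vanish in a general associative algebra.

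Next I would carry out the computation proving associativity modulo $V_{(g)}V$. The crucial observation, as in \cite{EH}, is the identity $f(z)f(w)-f(z-w)f(w)-f(z-w)f(-z) = $ (something expressible via $g$), more precisely the functional equation satisfied by both $f(z)=z^{-1}$ and $f(z)=c e^{cz}/(e^{cz}-1)$, namely that $f(z)f(w)$ decomposes along $i_{z,w}$ and $i_{w,z}$ expansions into terms of the form $f(z-w)f(w)$, $f(z-w)f(-z)$ plus a total-derivative term $g(\cdot)$. Applying this to the residue of (\ref{eq2.5}) against $f(x)$ and $f(w)$ and using that $Y(Y(a,x)b,w)c$ produces the iterate $(a_{(f)}b)_{(f)}c$ after integrating, I obtain $a_{(f)}(b_{(f)}c) - (a_{(f)}b)_{(f)}c \in V_{(g)}V$, which is associativity in the quotient. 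The unit axiom $[{\bf 1}]_{(f)}[a]=[a]=[a]_{(f)}[{\bf 1}]$ follows from the vacuum axioms exactly as in the vertex algebra case: $Y({\bf 1},z)a=a$ gives $\mathbf{1}_{(f)}a = \mathrm{Res}_z f(z) a z^0$-type contributions, and $Y(a,z)\mathbf{1}\in V[[z]]$ together with the pole structure of $f$ handles the other side, with the discrepancy again landing in $V_{(g)}V$; here one also needs that $s a = a_{(g)}\mathbf 1$ type elements are in $V_{(g)}V$, which is immediate.

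The one genuinely new point relative to \cite{EH} — and the step I expect to be the main obstacle — is bookkeeping the $\mathcal{S}$-local sum $\sum_{i=1}^m$ consistently through the residue manipulations: one must check that replacing $Y(b,w)Y(a,z)$ by $\sum_i Y(b^i,w)Y(a^i,z)$ does not interfere with the delta-function calculus used to derive and then integrate (\ref{eq2.5}), and in particular that the "correction" terms produced on the $\mathcal{S}$-local side are still of the form (something)$_{(g)}$(something) so that they die in the quotient. This is plausible because $\partial f = g$ is unchanged and the $\mathcal{S}$-local sum only affects the right-hand side of the commutator formula, not the iterate side; nevertheless it requires care to confirm that $V_{(g)}V$ is a two-sided ideal-compatible subspace under these manipulations, i.e.\ that the quotient map is well defined on all the products appearing. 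Once that is verified, the proof of Theorem \ref{tm2.6} is complete, being otherwise identical to \cite{EH} with (\ref{eq2.3}) replaced throughout by (\ref{eq2.4}).
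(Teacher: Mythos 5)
Your proposal follows essentially the same route as the paper: derive the $\mathcal{S}$-local Jacobi/Borcherds-type identity (\ref{eq2.5}) together with the skew-symmetry (\ref{eq2.6}), take residues against $f$ to get the commutator and iterate formulas with the $\mathcal{S}$-local sum in place of the ordinary commutator, and then run the argument of \cite{EH}, with the main point of care being that the correction terms land in $V_{(g)}V$ so the product is well defined on the quotient. The paper phrases this last step via the opposite correspondence $Y^{op}$ and the condition $f(z)\partial_z^{(j)}g(-z),\ g(z)\partial_z^{(j)}f(z)\in\langle\partial_z^{(k)}g(z)\rangle$, but this is the same verification you identify, so your approach matches the paper's.
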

\begin{proof}
For any $a,b,c \in V$, set 
\begin{align*}
A(z,w)=Y(a,z)Y(b,w)c,\  B(z,w)=\sum_{i=1}^rY(b^i,w)Y(a^i,z)c\  \ and \  \ C(z,w)=Y(Y(a,z)b,w)c.
\end{align*}
According to the associativity relation (\ref{eq2.32}) and $\mathcal{S}$-locality identity (\ref{eq2.4}), we can obtain that 
\begin{align*}
(z-w)^NA(z,w)=(z-w)^NB(z,w)\ \  and \ \ (x+w)^Ni_{x,w}A(x+w,w)=(x+w)^NC(x,w).
\end{align*}
By Lemma 2.3 in \cite{DGK}, we can obtain that
\begin{align*}
&i_{z,w}\delta(x,z-w)Y(a,z)Y(b,w)c-i_{w,z}\delta(x,z-w)\sum_{i=1}^rY(b^i,w)Y(a^i,z)c\\
&=i_{z,x}\delta(w,z-x)Y(Y(a,x)b,w)c.
\end{align*}
Taking $Res_z$, we obtain that
\begin{align*}
&Res_z(i_{z,w}\delta(x,z-w)Y(a,z)Y(b,w)c-i_{w,z}\delta(x,z-w)\sum_{i=1}^rY(b^i,w)Y(a^i,z)c)=Y(Y(a,x)b,w)c,
\end{align*}
which is similar to the Borcherds identity.
Applying $Res_xx^n$ to both sides,we can obtain that 
\begin{align}\label{eq2.14}
&Res_z(i_{z,w}(z-w)^nY(a,z)Y(b,w)c-i_{w,z}(z-w)^n\sum_{i=1}^rY(b^i,w)Y(a^i,z)c)=Y(a_nb,w)c,
\end{align}
which is similar to the $n$-th product axiom. Similar to the proof of Proposition 4.1(a) in \cite{BV}, we can deduce that $Y$ satisfies the formula (\ref{eq2.14}) if and only if the following formula 
\begin{align}\label{eq2.15}
Y(a,z)Y^{op}(b,w)c-Y^{op}(b,w)Y(a,z)c=\sum_{i=1}^r\sum_{j\geq0}Y^{op}(a^i_jb,w)c^i\partial^{(j)}_w\delta(z-w)
\end{align}
 holds. According to the formula (\ref{eq2.33}) and (\ref{eq2.6}), we have 
\begin{align}\label{eq2.16}
Y^{op}(a,z)b=e^{zs}Y(b,-z)a=e^{zs}e^{-zs}\sum_{i=1}^rY(a^i,z)b^i=\sum_{i=1}^rY(a^i,z)b^i.
\end{align}

Using (\ref{eq2.14}), (\ref{eq2.15}) and (\ref{eq2.16}), we can deduce that the only universal way to guarantee $(a_{(g)}b)_{(f)}c, a_{(f)}(b_{(g)}c)  \in V_{(g)}V$ is to require 
$$f(z)\partial_z^{(j)}g(-z), \ g(z)\partial_z^{(j)}f(z) \in \langle \partial_z^{(k)}g(z) | k \in \mathbb{Z^+}\rangle$$
for all $j\in \mathbb{Z^+}$. 

The rest proof is similar to the proof of Proposition \ref{pro2.10}. 
\end{proof}

Now we give the module over the above algebras.


\begin{definition}
\begin{em}
A left module over a field algebra $(V,Y,{\bf 1})$ is a left $R$-module $M$ equipped with an operator $s \in EndM$ and a linear map

$$V\to EndM[[z,z^{-1}]], a\mapsto Y^M(a,z)=\sum\limits_{n\in\mathbb{Z}}a^M_{n}z^{-n-1},$$satisfying  
\begin{enumerate}[(i)]
\item For any $a\in V, v \in M$, $a_nv=0$ if $n \gg 0$.
\item (Vacuum axiom) $Y^M({\bf 1},z)=id_M$.
\item (Translation invariance) $sY^M(a,z)-Y^M(a,z)s=\partial_zY^M(a,z)$.
\item (Associativity axiom) $(z-w)^NY^M(Y(a,z)b,-w)v=(z-w)^Ni_{z,w}Y^M(a,z-w)Y^M(b,-w)v$ for $a,b\in V,v\in M$, $N\gg 0$.
\end{enumerate}
A right module $M$ over $(V,Y,{\bf 1})$, we replace the associative axiom of left module  with the following

($iv^{'}$) Associativity axiom, $$(z-w)^NY^M(e^{zs}Y(b,-z)a,-w)v=(z-w)^Ni_{z,w}Y^M(a,z-w)Y^M(b,-w)v$$ for $a,b\in V,v\in M$, $N\gg 0$.

When $(V,Y,{\bf 1})$ is a strong field algebra, in the above definition we replace the associative axiom with the following $n$-th axiom:
$$Y^M(a_{n}b,z)=Y^M(a,z)_{n}Y^M(b,z), \qquad n\in\mathbb{Z}.$$
\end{em}		
\end{definition}

Two modules $M_1$ and $M_2$ for $(V,Y,{\bf 1})$ are isomorphic if there is an invertible $R$-module map $f:M_1\to M_2$ such that $fY^{M_1}(a,z)=Y^{M_2}(a,z)f$ for $a\in V$.
The notation $M_1\simeq M_2$ indicates that $M_1$ and $M_2$ are isomorphic. There is the obvious notion of simple $(V,Y,{\bf 1})$-module, and in particular $(V,Y,{\bf 1})$ is called simple if it is itself a simple left $(V,Y,{\bf 1})$-module and simple right $(V,Y,{\bf 1})$-module.

For any subset $S\subseteq M$ of a representation of  the field algebra $(V,Y,{\bf 1})$, let $\langle S\rangle$ be the $(V,Y,{\bf 1})$-submodule of $M$ generated by $S$, i.e., the smallest $(V,Y,{\bf 1})$-submodule of $M$ contains $S$. Obviously, $\langle S \rangle $ is spanned by all elements of the form
$$v^1_{n_1}\cdots v^t_{n_t}a,\qquad v\in V, n_j\in \mathbb{Z}, a\in S, t\ge 0.$$
Similar to Proposition 4.1 of \cite{DM},  we can prove that $\langle S\rangle $ is spanned by 
$$v_{n}a,\qquad v\in V, n\in \mathbb{Z}, a\in S.$$

\begin{example}Suppose $(M,Y^M)$ is a representation of a field algebra $(V,Y,{\bf 1})$, and $N$ is a vector space. Then $M\otimes N$ is a representation of 
$(V,Y,{\bf 1})$ with $s(m\otimes x)=(sm)\otimes x$ and the linear mapping 
$Y^{M\otimes N}(a,z)=\sum\limits_{n\in\mathbb{Z}}a^{M\otimes N}_{n}z^{-n-1},$  where $a^{M\otimes N}_{n}(m'\otimes y)=a^M_{n}(m')\otimes y$.
\end{example}

Let  $g$ be left $R$-module  automorphism of  $(V,Y,{\bf 1})$. Then $g$ is called an automorphism of field algebra $(V,Y,{\bf 1})$ if $g({\bf 1})={\bf 1}$  
and $Y(g(a),z)g(b)=\sum\limits_{n\in\mathbb{Z}}g(a_{n}b)z^{-n-1}$, or equivalently, $g(a)_{n}g(b)=g(a_{n}b)$ for all $n\in\mathbb{Z}$.
If  $f\in End(V)$ is an endomorphism of the left $R$-module $V$, then $f$ is called a derivation of the field algebra $(V,Y,{\bf 1})$ if $Y(f(a),z)b+Y(a,z)f(b)=0$, $f({\bf 1})=0$.
That $Y(f(a),z)b+Y(a,z)f(b)=0$ means that $f(a_{n}b)=f(a)_{n}b+a_{n}f(b)$ for all $n\in\mathbb{Z}$.

A state-field correspondence $Y$ on a left $R$-module $(V, {\bf 1})$ is holomorphic if $Y(a,z)b\in V[[z]]$ for all $a,b\in V$. Suppose $(V,{\bf 1})$ is a torsion $R$-module. Then $Y$ is holomorphic.

\begin{proposition}Suppose $(M,Y^M)$ is a representation of a field algebra $(V, Y,{\bf 1})$. If $M$ is a finite-dimensional, then $Y^M(a,z)m\in M[[z]]$ for any $a\in V$ and $m\in M$.
\end{proposition}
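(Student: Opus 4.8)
The plan is to leverage the finite-dimensionality of $M$ to produce, for each $a\in V$ and $m\in M$, a single polynomial that simultaneously kills all the negative-power coefficients appearing in $Y^M(a,z)m$, hence forces $Y^M(a,z)m\in M[[z]]$. First I would recall that for any fixed $a\in V$ the operators $a^M_n\in \mathrm{End}\,M$ all commute? — no, that is false; instead I would use the associativity axiom together with the vacuum axiom. The key observation is: apply the associativity axiom (iv) with $b={\bf 1}$. Since $Y(a,z){\bf 1}=a+(sa)z+\cdots\in V[[z]]$, the right-hand side $(z-w)^N i_{z,w}Y^M(a,z-w)Y^M({\bf 1},-w)v=(z-w)^N i_{z,w}Y^M(a,z-w)v$ has a genuine expansion, and the left-hand side is $(z-w)^N Y^M(Y(a,z){\bf 1},-w)v$, which is a Laurent series in $w$ whose coefficients lie in $M[[z]]$. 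Comparing, after replacing $z-w$ by a single variable $x$ via $i_{x,w}$, shows $Y^M(a,x)v$ times $x^{N}$ lies in $M[[x]]$ — but that only bounds the pole order, which we already know. So that alone is not enough; the point of finite-dimensionality must enter differently.

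The real mechanism is a standard finiteness argument: consider the descending chain of subspaces of $M$
\[
M \supseteq \langle a^M_n v : n\ge k_0,\ v\in M\rangle \supseteq \cdots
\]
or, more precisely, for fixed $a$ and $m$ look at the $R$-submodule (or merely the ${\bf k}$-span) $W$ of $M$ spanned by all coefficients $a^M_n m$ for $n\in\mathbb{Z}$. Because $M$ is finite-dimensional, $W$ is finite-dimensional, so only finitely many of the $a^M_n m$ are linearly independent; combined with the truncation condition (i), $a^M_n m=0$ for $n\gg 0$, the series $Y^M(a,z)m=\sum_n a^M_n m\, z^{-n-1}$ is a Laurent series in $z$ with finitely many negative-degree... no — it always has finitely many positive-degree terms in $z^{-1}$; the issue is the infinitely many terms $a^M_n m$ with $n<0$, i.e.\ positive powers of $z$, which are fine, versus terms with $n\ge 0$ bounded above, i.e.\ negative powers of $z$, of which there are already only finitely many. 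Wait — that means $Y^M(a,z)m\in M((z))$ automatically and the claim $\in M[[z]]$ asks us to kill the finitely many terms $a^M_n m$ with $0\le n$. So the content is: $a^M_n m = 0$ for all $n\ge 0$ when $\dim M<\infty$.

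To prove $a^M_n m=0$ for $n\ge 0$, I would argue as follows. Translation invariance (iii) gives $[s, Y^M(a,z)] = \partial_z Y^M(a,z)$, equivalently $[s,a^M_n] = -n\, a^M_{n-1}$... let me instead use it to show that if $a^M_n m = 0$ for all $n\ge N$ (true for $N$ large) then one can decrease $N$. Apply the associativity axiom (iv) with the roles arranged so that $Y(a,z)b$ with $b$ chosen appropriately, together with the holomorphicity remark just before the proposition: the text notes "A state-field correspondence $Y$ on a left $R$-module $(V,{\bf 1})$ is holomorphic if $Y(a,z)b\in V[[z]]$ … Suppose $(V,{\bf 1})$ is a torsion $R$-module. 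Then $Y$ is holomorphic." The analogous statement for modules is exactly what we want: a finite-dimensional module is a torsion $R$-module (since $s$ acts as a finite-dimensional endomorphism, its minimal polynomial $p(s)$ annihilates $M$), and then translation invariance propagates this to kill the positive-frequency modes of every $Y^M(a,z)$. Concretely: for each $m\in M$ there is a nonzero $p(s)\in R$ with $p(s)m=0$; using (iii) repeatedly, $p(s)$ and its $z$-derivatives relate $a^M_n m$ across different $n$, and pushing the truncation (i) down through these relations forces $a^M_n m=0$ for $n\ge 0$.

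The main obstacle — and the step I would spend the most care on — is making the propagation argument precise: showing that the single relation "$M$ is $R$-torsion" plus translation invariance plus upper truncation genuinely implies $Y^M(a,z)m\in M[[z]]$, rather than merely bounding the pole order. I expect the cleanest route is to mimic verbatim the proof of the cited fact that a torsion $R$-module has holomorphic state-field correspondence: take $p(s)=\prod(s-\lambda_i)$ annihilating $m$, write $Y^M(sa,z)m = \partial_z Y^M(a,z)m + s Y^M(a,z)m - Y^M(a,z)sm$ and iterate, or dually use that $e^{ws}$ acts locally nilpotently on the generalized-eigenspace decomposition of $M$ so that the formal-variable manipulations in the associativity axiom converge and pin down the coefficients. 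So the proof is short: (1) finite-dimensional $\Rightarrow$ $R$-torsion; (2) invoke/adapt the torsion-implies-holomorphic argument for the module action, which hands us $Y^M(a,z)m\in M[[z]]$; the delicate point is step (2), ensuring the module version of that lemma holds with the same proof, which it does because the only properties used are translation invariance and the vacuum/truncation axioms, all of which are part of the module definition.
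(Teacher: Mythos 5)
You have correctly isolated what the statement really says (namely that $a^M_nm=0$ for all $n\ge 0$, the positive powers of $z$ being harmless) and you have picked the right mechanism: finite-dimensionality makes $M$ a torsion ${\bf k}[s]$-module, and translation covariance must then kill the singular coefficients. This is in substance the paper's own proof, which packages the singular part as the finite sum $[a,m]=\sum_{n\ge0}\frac{(-s)^n}{n!}\otimes 1\otimes_H a_{(n)}m$, notes the sesquilinearity $[a,sm]=(1\otimes s\otimes_H 1)[a,m]$ coming from the module axiom (iii), and concludes $[a,m]=0$ from finite-dimensionality. The genuine gap in your proposal is that the step you explicitly leave open (``making the propagation argument precise'') \emph{is} the proof, and the concrete mechanism you sketch for it would not work as written: the displayed identity $Y^M(sa,z)m=\partial_zY^M(a,z)m+sY^M(a,z)m-Y^M(a,z)sm$ is not one of the module axioms (the module definition only provides the commutator form $sY^M(a,z)-Y^M(a,z)s=\partial_zY^M(a,z)$), and it conflates two identities that cannot both be fed in this way. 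Moreover, the cited $V$-level remark ``torsion $\Rightarrow$ holomorphic'' is naturally proved using $Y(sa,z)b=\partial_zY(a,z)b$, i.e.\ torsion in the \emph{first} argument; here only the second argument ($M$, not $V$) is torsion, so that proof does not transfer ``verbatim'', and ``pushing the truncation down through these relations'' is exactly what needs an argument.

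The missing computation, which is what the paper's $[a,m]$ encodes, runs as follows. For fixed $a,m$ set $\phi_\lambda(m)=\sum_{n\ge0}\frac{\lambda^n}{n!}\,a^M_nm\in M[\lambda]$, a polynomial in $\lambda$ by the truncation axiom (i). Axiom (iii) in components reads $a^M_n(sm)=s\,a^M_nm+n\,a^M_{n-1}m$, which sums to the sesquilinearity $\phi_\lambda(sm)=(s+\lambda)\phi_\lambda(m)$. Choosing $0\ne p\in{\bf k}[s]$ with $p(s)m=0$ (e.g.\ the minimal polynomial of $s$ on $M$), one gets $0=\phi_\lambda(p(s)m)=p(s+\lambda)\phi_\lambda(m)$ in $M[\lambda]$; since $p(s+\lambda)$ is, up to a nonzero scalar, monic in $\lambda$ of degree $\deg p$, comparing top $\lambda$-coefficients forces $\phi_\lambda(m)=0$, i.e.\ $a^M_nm=0$ for all $n\ge0$. (An even more elementary variant: (iii) gives $[s,a^M_n]=-n\,a^M_{n-1}$; the span $W$ of $\{s^km\}_{k\ge0}$ is finite-dimensional and $s$-stable, so by (i) there is a single $N$ with $a^M_nW=0$ for $n\ge N$, and downward induction via $a^M_{n-1}=-\frac1n[s,a^M_n]$ kills $a^M_nW$ for all $n\ge0$.) With this propagation step carried out on the $m$-side rather than the $a$-side, your outline becomes a complete proof and coincides with the paper's.
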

\begin{proof} 
For any $a\in V$ and $m\in M$, let $[a,m]=\sum\limits_{n=0}^{+\infty}(-s)^{(n)}\otimes 1\otimes_Ha_{(n)}m$, where $(-s)^{(n)}=\frac{(-s)^n}{n!}$. Then
$[sa,m]=(s\otimes 1\otimes _H1)[a,m]$ and $[a,sm]=(1\otimes s\otimes_H1)[a,m]$. As $M$ is finite-dimensional, we have $[a,m]=0$ for any $a\in V, m\in  M$.
\end{proof}

Now, we need to introduce the following two definitions.
\begin{definition}\label{def2.8}(Ref. \cite{BV, DGK})
\begin{em}
A left module over a vertex algebra $(V, Y, {\bf 1}, s)$ is actually a left $V$-module $(M, Y^M)$ as field algebra, such that each pair of $(Y^M(a,z)$,$Y^M(b,z))(a,b \in V)$ is local, i.e., there exists $n \in  \mathbb{Z^+}$ such that
\begin{align}\label{eq2.333}
(z-w)^n[Y^M(a,z),Y^M(b,w)]=0.
\end{align}
\end{em}		
\end{definition}

\begin{definition}\label{def2.9}
\begin{em}
Let $V$ be an $\mathcal{S}$-local vertex algebra and let $(M, Y^M)$ be a left $V$-module as field algebra. Then $M$ is called a left module for $V$ as $\mathcal{S}$-local vertex algebra, if it satisfies the following property: for any $a,b \in V$, there exists $n,m \in  \mathbb{Z^+}$, and $a^i,b^i \in V, i=1,2,...,m$ such that the following $\mathcal{S}$-locality indentity
\begin{eqnarray}\label{eq2.10}
(z-w)^nY^M(a,z)Y^M(b,w)=(z-w)^n\sum_{i=1}^mY^M(b^i,w)Y^M(a^i,z).
\end{eqnarray}

\end{em}		
\end{definition}

\section{Hopf actions on field algebras}

In this section, we define the action and coaction of a Hopf algebra on a field algebra. Let $H$  be a Hopf algebra with coproduct $\Delta$, counit $\varepsilon$ and antipode $S$. For any $h\in H$, $\Delta(h)= \sum h_{1}\otimes h_{2}.$

\begin{definition}\label{def3.1}
Let $(V,Y,{\bf 1})$ be a field algebra and $H$ a Hopf algebra. We say that $V$ is an $H$-module field algebra, if 

(i) V is a left $H$-module.

(ii) $h{\bf 1}=\varepsilon(h){\bf 1}$.

(iii) $hsa=sha$.

(iv) $h(a_{n}b)=\sum (h_{1}a)_{n}(h_{2}b)$ for any $h\in H$ and $a,b\in V$.

\noindent $V$ is called an $H$-comodule field algebra if 

(i) $V$ is a right $H$-comodule with $\rho:V\to V\otimes H, v\mapsto \sum v_{0}\otimes v_{1}$.

(ii) $\rho({\bf 1})={\bf 1}\otimes 1$.

(iii) $\rho(a_{n}b)=\sum {(a_0)}_{n}(b_{0})\otimes a_{1}b_{1}$.

(iv) $\rho(sa)=\sum sa_{0}\otimes a_{1}$ for any $a,b\in V$ and any $n\in \mathbb{Z}$, where $\rho(a)=\sum a_{0}\otimes a_{1}\in V\otimes H$.

\noindent In particular, $V$ is called an $H$-module  (resp. $H$-comodule) ($\mathcal{S}$-local) vertex algebra if $V$ is a ($\mathcal{S}$-local) vertex algebra.
\end{definition}

\begin{example}Let $(V,Y, {\bf 1})$ be a field algebra. Then $(V\otimes H,Y^H,{\bf 1} \otimes 1)$ is an $H$-module field algebra with $Y^H(a\otimes g,z)(b\otimes h)=Y(a,z)b\otimes gh$ and  $h(a\otimes f):=\sum a\otimes h_{1}fS(h_{2})$. 

Suppose $(V,Y,{\bf 1})$ is an $H$-module field algebra and $H$ is cocommutative. Then $(V, Y^{op},{\bf 1})$ is also an $H$-module field algebra, where $Y^{op}(a,z)b:=e^{zs}Y(b,-z)a$.

Suppose that $G$ is a finite group. The $G$-graded VOA introduced in \cite{DM} is a ${\bf k}[G]$-comodule field algebra with the coaction given by $\rho(v)=v\otimes g$ for any $v\in V_{(g,n)}$.
\end{example}

Now, we give the following results for later use.

\begin{lemma}\label{lm2.16}(Ref. \cite{HL2})
Let $H$ be a Hopf algebra and $V$ an $H$-module field algebra. Set $V\#H=V\otimes H$ as vector space. If $(M, Y_M, s_M)$ is a $V$-module as well as an $H$-module such that $h(Y_M(a,z)m)=\sum Y_M(h_1a,z)h_2m$ and $hs_M(m)=s_M(hm)$ for any $h\in H$, $a\in V$, $m \in M$, where $\Delta(h)=\sum h_1\otimes h_2$. Then $(M, Y_M^{V\#H}, s_M)$ is a $V\#H$-module, where $Y_M^{V\#H}$ is defined as follows:
\begin{align*}
Y_M^{V\#H}(a\#h,z)m=Y_M(a,z)hm
\end{align*}
 for any $h\in H$, $a\in V$, $m \in M$.
\end{lemma}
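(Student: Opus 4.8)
The plan is to verify the three module axioms for $(M, Y_M^{V\#H}, s_M)$ directly, relying on the fact that $(V\#H, Y^{V\#H}, {\bf 1}\#1, s\#1)$ is already known to be a field algebra (the suppressed Lemma preceding this statement). First I would check the vacuum axiom: $Y_M^{V\#H}({\bf 1}\#1, z)m = Y_M({\bf 1}, z)(1\cdot m) = Y_M({\bf 1},z)m = m$, using the vacuum axiom for the $V$-module $M$ and that $1\in H$ acts as identity. Next, the translation invariance: I would compute $s_M Y_M^{V\#H}(a\#h,z)m - Y_M^{V\#H}(a\#h,z)s_M m = s_M(Y_M(a,z)hm) - Y_M(a,z)h s_M m$. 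Here I would use the hypothesis $h s_M(m') = s_M(h m')$ to rewrite $Y_M(a,z)hs_M m = Y_M(a,z)s_M(hm)$, and then apply the translation invariance of the $V$-module $M$ to get $\partial_z Y_M(a,z)(hm) = \partial_z Y_M^{V\#H}(a\#h,z)m$, which also matches the action of $s\#1$ coming from the field algebra structure on $V\#H$.

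The substantive step is the associativity axiom. I would take $a\#h, b\#g \in V\#H$ and $m\in M$, and must show
\begin{align*}
(z-w)^N Y_M^{V\#H}\bigl(Y^{V\#H}(a\#h, z)(b\#g), -w\bigr)m = (z-w)^N i_{z,w} Y_M^{V\#H}(a\#h, z) Y_M^{V\#H}(b\#g, -w)m
\end{align*}
for $N$ large. On the right side, by definition this is $(z-w)^N i_{z,w} Y_M(a,z)\bigl(h Y_M(b,-w) g m\bigr)$; using $h(Y_M(b,-w)m') = \sum Y_M(h_1 b, -w) h_2 m'$ this becomes $(z-w)^N i_{z,w} \sum Y_M(a,z) Y_M(h_1 b, -w)(h_2 g m)$. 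On the left side, $Y^{V\#H}(a\#h,z)(b\#g) = \sum Y(a,z)(h_1 b)\#h_2 g$, so the left side is $(z-w)^N Y_M^{V\#H}\bigl(\sum (Y(a,z)(h_1 b))\#h_2 g, -w\bigr)m = (z-w)^N \sum Y_M\bigl(Y(a,z)(h_1 b), -w\bigr)(h_2 g m)$. Now for each term, applying the associativity axiom of the $V$-module $M$ to the pair $a, h_1 b$ acting on the vector $h_2 g m$, the two sides agree — provided $N$ can be chosen uniformly. Since $\Delta(h)$ is a finite sum, finitely many pairs $(a, h_1 b)$ are involved, so one takes $N$ to be the maximum of the finitely many bounds; this is where I would be slightly careful, but it is the standard "finite sum" argument.

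The main obstacle, such as it is, is bookkeeping with the coproduct and making sure the $H$-equivariance hypotheses are invoked in the correct order; there is no deep difficulty since the hard structural work (that $V\#H$ is a field algebra) is imported from the cited lemma. I would also remark that the construction is consistent: restricting to $H = {\bf k}$ recovers the ordinary $V$-module structure, and restricting to $a = {\bf 1}$ recovers the $H$-module structure on $M$, so $Y_M^{V\#H}$ genuinely extends both. One final check worth stating explicitly is that $Y_M^{V\#H}(a\#h,z)m \in M((z))$: this holds because $Y_M(a,z)(hm)\in M((z))$ by the truncation property of the $V$-module $M$ applied to the vector $hm\in M$.
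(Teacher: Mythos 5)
The paper itself gives no proof of this lemma --- it is quoted from \cite{HL2} --- so there is nothing internal to diverge from; your direct verification of the truncation, vacuum, translation and associativity axioms is the standard argument and parallels the verifications the paper does write out for the closely related statements (Proposition \ref{pro3.4} and Lemma \ref{lm4.2}). The vacuum and translation checks correctly use $1\cdot m=m$ and $hs_M=s_Mh$, and the associativity step is reduced correctly: expand $Y^{V\#H}(a\#h,z)(b\#g)=\sum Y(a,z)(h_1b)\#h_2g$, rewrite $hY_M(b,-w)(gm)=\sum Y_M(h_1b,-w)(h_2gm)$, apply the $V$-module associativity axiom to each triple $(a,\,h_1b,\,h_2gm)$, and take a uniform $N$ using the finiteness of $\Delta(h)$. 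One correction is needed: in the paper's definition of a module the associativity axiom reads $(z-w)^NY^M(Y(a,z)b,-w)v=(z-w)^N i_{z,w}Y^M(a,z-w)Y^M(b,-w)v$, so the identity you must prove is $(z-w)^N Y_M^{V\#H}\bigl(Y^{V\#H}(a\#h,z)(b\#g),-w\bigr)m=(z-w)^N i_{z,w}\,Y_M^{V\#H}(a\#h,z-w)\,Y_M^{V\#H}(b\#g,-w)m$, whereas you wrote the outer operator at $z$ (and correspondingly $Y_M(a,z)$) throughout the right-hand side; as stated, the final appeal to the $V$-module axiom does not literally apply, since that axiom produces $Y_M(a,z-w)$ on the outside. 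Replacing $z$ by $z-w$ in the outer operator on the right-hand side fixes this consistently, and the rest of your computation, including the uniform-$N$ argument and the closing remarks on truncation and on recovering the $V$- and $H$-module structures, is sound.
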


 Suppose that $V$ is an $H$-module vertex algebra and $H={\bf k}G$, where $G$ is a finite subgroup of $Aut(V)$. Then $H^*$ is also a Hopf algebra with 
 \begin{align*}
 \rho_g\rho_h=\delta_{g,h}\rho_g,\quad \Delta(\rho_g)=\sum_{h \in G}\rho_{gh^{-1}}\otimes \rho_h,\quad \varepsilon(\rho_g)=\delta_{1,g},\quad S(\rho_g)=\rho_{g^{-1}},
 \end{align*}
 where $\rho_g \in H^*$ is defined by $\rho_g(h)=\delta_{g,h}$, for any $g,h \in G$. It is easy to check that $\sum_{g \in G}\rho_g$ is the identity element of $H^*$ which will be denoted by $E$. Thus, we can obtain following results.
 \begin{lemma}\label{lm5.1}
Let $V$ be an $H$-module vertex algebra and $H={\bf k}G$, where $G$ is a finite subgroup of $Aut(V)$. Then 
\begin{enumerate}[(i)]
\item $V\#H$ is an $H^*$-module, where the action of $H^*$ on $V\#H$ is defined as $\rho_h(a\#g)=\delta_{h,g}a\#g$ for any $a\#g \in V\#H, \rho_h \in H^*$ with $h,g \in G$;
\item  For any $f \in H^*, u\#g, v\#h \in V\# H$, $f(Y^{V\#H}(u\#g,z)v\#h)=\sum Y^{V\#H}(f_1(u\#g),z)(f_2(v\#h))$, where $\Delta(f)=\sum f_1 \otimes f_2$. 
 \end{enumerate}
 Furthermore, $V\#H$ is an $H^*$-module $\mathcal{S}$-local vertex algebra. 
 \end{lemma}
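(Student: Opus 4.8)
The plan is to verify the three assertions of Lemma \ref{lm5.1} in order, using the explicit description of $H^*$ as a Hopf algebra (with basis $\{\rho_g\}_{g\in G}$, pointwise multiplication, and the stated coproduct $\Delta(\rho_g)=\sum_{h\in G}\rho_{gh^{-1}}\otimes\rho_h$) together with the $H$-module vertex algebra structure of $V$ and the already-established fact (from Section 4, applied via the smash product construction analogous to Lemma \ref{lm2.16}) that $V\#H$ is an $\mathcal{S}$-local vertex algebra.

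First, for (i), I would check that $\rho_h(a\#g)=\delta_{h,g}\,a\#g$ genuinely defines an $H^*$-module structure. This amounts to the identities $\rho_h\rho_k(a\#g)=\delta_{h,k}\rho_h(a\#g)$, which follows from $\delta_{h,g}\delta_{k,g}=\delta_{h,k}\delta_{h,g}$, and $E(a\#g)=\sum_{h\in G}\rho_h(a\#g)=a\#g$, which is immediate. This step is purely formal. For (ii), I would compute both sides of
\begin{align*}
\rho_f\big(Y^{V\#H}(u\#g,z)(v\#h)\big)=\sum \rho_{f}{}_{1}(u\#g)\text{-part acting on }\rho_{f}{}_{2}(v\#h)\text{-part}
\end{align*}
on a basis element $\rho_f\in H^*$. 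Using $Y^{V\#H}(u\#g,z)(v\#h)=\sum Y(u,z)(g\cdot v)\#gh$ (the smash product vertex operator, where $g\cdot v$ denotes the action of $g\in G=H$ on $v$), the left side is $\delta_{f,gh}\sum Y(u,z)(g\cdot v)\#gh$. For the right side, $\Delta(\rho_f)=\sum_{k\in G}\rho_{fk^{-1}}\otimes\rho_k$, so $\rho_{fk^{-1}}(u\#g)=\delta_{fk^{-1},g}u\#g$ and $\rho_k(v\#h)=\delta_{k,h}v\#h$; summing over $k$ forces $k=h$ and $fk^{-1}=g$, i.e. $f=gh$, giving $\delta_{f,gh}Y^{V\#H}(u\#g,z)(v\#h)$. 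The two sides agree, so (ii) holds; the only subtlety is bookkeeping the two Kronecker deltas against the sum over $G$.

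The main content is the final assertion: $V\#H$ is an $H^*$-module $\mathcal{S}$-local vertex algebra, meaning it satisfies conditions (i)--(iv) of Definition \ref{def3.1} (with $H$ replaced by $H^*$) and is an $\mathcal{S}$-local vertex algebra. That $V\#H$ is $\mathcal{S}$-local is the content of Section 4 (the analogue for vertex algebras of the field-algebra smash product), which I may assume. Conditions (i) and (iv) of the module-field-algebra axioms are exactly parts (i) and (ii) of this lemma. Condition (ii), $\rho_f({\bf 1}\#1)=\varepsilon(\rho_f)({\bf 1}\#1)$, holds because $\rho_f({\bf 1}\#1)=\delta_{f,1}{\bf 1}\#1$ and $\varepsilon(\rho_f)=\delta_{1,f}$. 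Condition (iii), $\rho_f\, s(a\#g)=s\,\rho_f(a\#g)$: since $s(a\#g)=(sa)\#g$, both sides equal $\delta_{f,g}(sa)\#g$. So all axioms reduce to the computations already done.

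I expect the main obstacle to be purely organizational rather than deep: one must be careful that the coproduct on $H^*$ used in (ii) is genuinely $\Delta(\rho_g)=\sum_{h}\rho_{gh^{-1}}\otimes\rho_h$ (the convolution dual of multiplication in ${\bf k}G$) and not the opposite, and that the $H^*$-action is compatible with the precise form of the smash-product vertex operator $Y^{V\#H}(u\#g,z)(v\#h)=\sum Y(u,z)(g\cdot v)\#gh$. The verification that $V\#H$ is an $\mathcal{S}$-local vertex algebra in the first place is the genuinely nontrivial input, but it is established earlier; granting it, the entire lemma is a sequence of short $\delta$-function bookkeeping checks, and I would present it compactly by doing the computation in (ii) once and observing that (i), the vacuum axiom, and the translation-covariance axiom all follow by the same or easier manipulations.
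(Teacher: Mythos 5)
Your proposal is correct and follows essentially the same route as the paper: the core of both arguments is the identical Kronecker-delta computation showing $\rho_f\bigl(Y^{V\#H}(u\#g,z)(v\#h)\bigr)=\delta_{f,gh}Y^{V\#H}(u\#g,z)(v\#h)=\sum_{k\in G}Y^{V\#H}(\rho_{fk^{-1}}(u\#g),z)\rho_k(v\#h)$, with the $\mathcal{S}$-locality of $V\#H$ taken as established elsewhere (Theorem \ref{tm3.2}). The only difference is that you spell out the $H^*$-module axioms and the vacuum and translation compatibilities, which the paper dismisses as obvious.
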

 \begin{proof}
 We just prove the second result, because the module structure is obvious. For convenience, we take $f=\rho_a$, then we can deduce that 
  \begin{align*}
 &\rho_a(Y^{V\#H}(u\#g,z)v\#h)=\rho_a(Y(u,z)(gv)\#gh)=\delta_{a,gh}Y^{V\#H}(u\#g,z)(v\#h),
  \end{align*}
 and
 \begin{align*}
 &\sum_{x \in G}Y^{V\#H}(\rho_{ax^{-1}}u\#g,z)\rho_xv\#h\\
 &= \sum_{x \in G}Y^{V\#H}(\delta_{ax^{-1},g}u\#g,z)(\delta_{x,h}v\#h)\\
 &=\sum_{x \in G}\delta_{ax^{-1},g}\delta_{x,h}Y^{V\#H}(u\#g,z)(v\#h)\\
 &=\delta_{ah^{-1},g}Y^{V\#H}(u\#g,z)(v\#h)\\
 &=\delta_{a,gh}Y^{V\#H}(u\#g,z)(v\#h).
 \end{align*}
  Thus, we have 
   \begin{align*}
   \rho_a(Y^{V\#H}(u\#g,z)v\#h)=\sum_{x \in G}Y^{V\#H}(\rho_{ax^{-1}}u\#g,z)\rho_xv\#h.
    \end{align*}
 This completes the proof.
 \end{proof}


Suppose that $H$ is a finite-dimensional Hopf algebra and $H^*$ is its dual Hopf algebra. Then $V$ is a comodule field algebra if and only if $V$ is an $H^*$-module field algebra.

\begin{lemma}\label{lm2.15}
Suppose that $(V, Y, {\bf 1})$ is an $H$-module field algebra. Then $(V^H,Y, {\bf 1})$ and $(V\otimes H,Y_H,{\bf 1} \otimes 1)$ are field algebras, where 
\begin{align}
Y_H(a\otimes h,z)(b\otimes g)=\sum\limits_{n\in\mathbb{Z}}a_{n}(h_{1}b)\otimes h_{2}gz^{-n-1},
\end{align}
for any $a\otimes h, b\otimes g \in V\otimes H$.
Moreover, if $t$ is a left integral of $H$ such that $\varepsilon(t)\neq 0$, then there is an integral $t$ such that $\varepsilon(t)=1$ and 
  $V^H\simeq \{({\bf 1} \otimes t)_{-1}(v_{-1}({\bf 1} \otimes t))|v\in V\otimes H\}$.  
  \end{lemma}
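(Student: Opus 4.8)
The plan is to establish the three assertions of Lemma~\ref{lm2.15} in turn, reusing as much as possible the structure already present in the excerpt. For the first assertion, that $(V^H,Y,{\bf 1})$ is a field algebra, I would first check that $V^H=\{v\in V\mid hv=\varepsilon(h)v\text{ for all }h\in H\}$ is closed under all the structure: it contains ${\bf 1}$ by axiom (ii) of Definition~\ref{def3.1}, it is $s$-stable by axiom (iii), and it is closed under all products $a_{n}b$ by axiom (iv) together with $\Delta(h)=\sum h_1\otimes h_2$ and the counit identities $\sum\varepsilon(h_1)h_2=h=\sum h_1\varepsilon(h_2)$. Once closure is verified, the vacuum axioms, translation covariance, and associativity relation \eqref{eq2.32} are inherited verbatim from $(V,Y,{\bf 1})$ by restriction, so nothing new has to be proved. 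For the second assertion, that $(V\otimes H,Y_H,{\bf 1}\otimes 1)$ is a field algebra, the key observation is that this is precisely the smash product $V\#H$ of Lemma~\ref{lm2.16} (with $M=V$ acting on itself), so I would either invoke that lemma directly or, if a self-contained argument is wanted, verify the state-field correspondence axioms by hand: the vacuum axiom $Y_H({\bf 1}\otimes 1,z)(b\otimes g)=b\otimes g$ is immediate, $Y_H(a\otimes h,z)({\bf 1}\otimes 1)\in (V\otimes H)[[z]]$ follows from the corresponding property of $Y$, translation covariance uses axiom (iii) ($h_1sb=sh_1b$), and the associativity relation \eqref{eq2.32} for $Y_H$ reduces to that for $Y$ after one expands the iterated products and repeatedly applies the $H$-module-algebra axiom (iv) and coassociativity.

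For the third assertion I would first normalize the integral: if $t$ is a left integral with $\varepsilon(t)\neq 0$, replace $t$ by $\varepsilon(t)^{-1}t$, which is again a left integral (since $ht=\varepsilon(h)t$ is preserved under scaling) and now satisfies $\varepsilon(t)=1$. The element $e:={\bf 1}\otimes t\in V\otimes H$ is then an idempotent-like object in the sense that $Y_H(e,z)e=\sum {\bf 1}_n({\bf 1})\otimes t_1\cdot t\,z^{-n-1}$; using $t_1\otimes t_2$ with $\Delta(t)$ and the left-integral property $\sum t_1 h(t_2)=\ldots$ one checks $t_1\,t_2\cdot(\text{stuff})$ collapses appropriately, and in particular $e_{-1}e=e$, i.e.\ $e$ behaves as a unit for the $(-1)$-product on the subspace it cuts out. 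The candidate description of $V^H$ is the image of the map $\Phi:V\otimes H\to V\otimes H$, $\Phi(v)=e_{-1}(v_{-1}e)$. I would show: (a) $\Phi(v)\in V^H$ for every $v$, by computing the $H$-action on $e_{-1}(v_{-1}e)$ and using that $t$ is a left integral together with axiom (iv), so that applying any $h\in H$ reproduces $\varepsilon(h)$ times the element; (b) $\Phi$ restricted to $V^H$ (viewed inside $V\otimes H$ via $v\mapsto v\otimes 1$, or the appropriate embedding) is the identity, again via $e_{-1}e=e$ and the vacuum/skew-symmetry identities; hence $\Phi$ is a projection onto $V^H$ and its image is exactly $\{e_{-1}(v_{-1}e)\mid v\in V\otimes H\}$.

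The main obstacle I anticipate is the third part, specifically getting the combinatorics of the integral and the coproduct to cooperate in the $n$-th product formulas of the field algebra $V\otimes H$. Unlike the group-algebra case (where $t=\frac{1}{|G|}\sum_{g\in G}g$ and everything is transparent), for a general Hopf algebra one must juggle $\Delta(t)=\sum t_1\otimes t_2$, the defining property $ht=\varepsilon(h)t$, and its consequence $\sum t_1\otimes t_2 h = \sum t_1\otimes h t_2$-type manipulations, all while these are being fed through the (not necessarily commutative) products $a_nb$ via axiom (iv). The cleanest route is probably to first prove the scalar identity $Y_H({\bf 1}\otimes t,z)({\bf 1}\otimes t)=({\bf 1}\otimes t)z^0+(\text{higher order in }z\text{ that vanishes})$, equivalently $({\bf 1}\otimes t)_n({\bf 1}\otimes t)=\delta_{n,-1}({\bf 1}\otimes t)$, reducing the whole $V^H$-identification to repeated application of this single relation plus the associativity relation \eqref{eq2.32}; then the verification that $\Phi$ is idempotent with image $V^H$ becomes a short formal computation. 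I would also remark that this mirrors the classical fact that $V^H=({\bf 1}\otimes t)\cdot(V\# H)\cdot({\bf 1}\otimes t)$ for module algebras, which is the conceptual content here.
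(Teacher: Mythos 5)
Your overall strategy is essentially the paper's: the field-algebra structure on $V\otimes H$ is verified directly (vacuum axioms, translation covariance via $hs=sh$, and reduction of the associativity relation \eqref{eq2.32} for $Y_H$ to that of $Y$ using axiom (iv) of Definition \ref{def3.1} and coassociativity), the integral is normalized to $\varepsilon(t)=1$, and the identification of $V^H$ rests on the computation $({\bf 1}\otimes t)_{-1}\bigl((v\otimes h)_{-1}({\bf 1}\otimes t)\bigr)=\varepsilon(h)\,tv\otimes t$ together with $tv\in V^H$; your closure check for $V^H$ (stability under ${\bf 1}$, $s$ and all $a_nb$) is a correct supplement to a point the paper leaves implicit.

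Two points need repair, though neither is fatal since you also offer the correct alternative in each case. First, Lemma \ref{lm2.16} cannot be ``invoked directly'' to obtain the field algebra $V\#H$: that lemma constructs $V\#H$-\emph{modules} via $Y_M^{V\#H}(a\#h,z)m=Y_M(a,z)hm$ (which for $M=V$ is not $Y_H$) and presupposes the algebra structure on $V\#H$; the by-hand verification you sketch, which is what the paper carries out, is the argument actually needed. Second, in the last part your map $\Phi(v)=e_{-1}(v_{-1}e)$ has image inside $V\otimes H$ (in fact it equals $V^H\otimes t$, the point being that the first tensor factor $tv$ lies in $V^H$, not that the element is $H$-invariant), and the embedding $v\mapsto v\otimes 1$ is \emph{not} fixed by $\Phi$: one computes $\Phi(a\otimes 1)=ta\otimes t=a\otimes t$ for $a\in V^H$. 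The ``appropriate embedding'' is $a\mapsto a\otimes t$, which is precisely the paper's map $\varphi$; with it, bijectivity follows from $e_{-1}\bigl((a\otimes t)_{-1}e\bigr)=a\otimes t$ for $a\in V^H$ and from $tv\in V^H$, and you should also record $Y_H(a\otimes t,z)(b\otimes t)=Y(a,z)b\otimes t$ so the bijection is an isomorphism of field algebras, which is how the paper reads ``$\simeq$''. Finally, the reduction you propose to the single relation $({\bf 1}\otimes t)_n({\bf 1}\otimes t)=\delta_{n,-1}({\bf 1}\otimes t)$ plus associativity is not how the computation goes: the working identities are $(v\otimes h)_{-1}({\bf 1}\otimes t)=\varepsilon(h)\,v\otimes t$ and $({\bf 1}\otimes t)_{-1}(v\otimes t)=tv\otimes t$, which follow from $ht=\varepsilon(h)t$ and $t_{2}t=\varepsilon(t_{2})t$ together with the counit axiom, and these control products with arbitrary $v\otimes h$, which the idempotent relation alone does not.
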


\begin{proof}The definition of $Y_H$ means that $Y_H(a\otimes h,z)=\sum\limits_{n\in\mathbb{Z}}a_{n}\circ h_{1}\otimes h_{2}\otimes z^{-n-1}$ and $(a\otimes h)_{n}(b\otimes g)=( a_{n}(h_{1}b))\otimes h_{2}g$. Thus, $Y_H({\bf 1}\otimes 1,z)=\sum\limits_{n\in\mathbb{Z}}{\bf 1}_{n}\otimes 1\otimes z^{-n-1}=id_{V\otimes H}$ and $Y_H(a\otimes h,z)({\bf 1}\otimes 1)=\sum\limits_{n\in\mathbb{Z}}a_{n}{\bf 1}\otimes h\otimes z^{-n-1}=a\otimes h+sa\otimes hz+\cdots\in V\otimes H[[z]].$
Define $s(a\otimes h)=sa\otimes h$. Then $Y_H(a\otimes h,z){\bf 1} \otimes 1=a\otimes h+s(a\otimes h)z+\cdots$   and 
\begin{align*}
& [s,Y(a\otimes h,z)](b\otimes g)\\
 &=\sum\limits_{n\in\mathbb{Z}}s(a_{n}h_{1}b)\otimes h_{2}g\otimes z^{-n-1}-\sum\limits_{n\in\mathbb{Z}}a_{n}h_{1}sb\otimes h_{2}g\otimes z^{-n-1}\\
 &=\sum\limits_{n\in\mathbb{Z}}(sa)_{n}h_{1}b\otimes h_{2}g\otimes z^{-n-1}\\
 &=Y(s(a\otimes h),z)(b\otimes g). 
 \end{align*}
 Hence $[s,Y(a\otimes h,z)]=Y(s(a\otimes h),z)$. Similarly, 
 \begin{align*}
 &\sum\limits_{n\in\mathbb{Z}}(sa)_{n}h_{1}b\otimes h_{2}g\otimes z^{-n-1}
 &=\sum\limits_{n\in\mathbb{Z}}-(n+1)a_{n}h_{1}b\otimes h_{2}g\otimes z^{-n-2}\
 &=\partial_z(Y(a\otimes h,z)(b\otimes g)). 
 \end{align*}
 Thus $[s,Y(a\otimes h,z)]=\partial_zY(a\otimes h,z)$.

Let $N$ be a positive integer such that $$(z-w)^NY(Y(a,z)h_{1}b,-w)(h_{2}g_{1}c)=(z-w)^Ni_{z,w}Y(a,z-w)Y(h_{1}b,-w)h_{2}g_{1}c.$$
Then 
\begin{align*}
&(z-w)^NY_H(Y_H(a\otimes h,z)(b\otimes g),-w)(c\otimes f)\\
&=(z-w)^N\sum\limits_{n,m\in\mathbb{Z}}(a_{n}h_{1}b)_{m}h_{2}g_{1}c\otimes h_{3}g_{2}f\otimes z^{-n-1}(-w)^{-m-1}\\
&=(z-w)^Ni_{z,w}Y(a,z-w)Y(h_{1}b,-w)h_{2}g_{1}c\otimes h_{3}g_{2}f\\
&=(z-w)^Ni_{z,w}Y_H(a\otimes h,z-w)Y_H(b\otimes g,-w)(c\otimes f).
\end{align*}
By now, we have proved that $(V\otimes H,Y_H, {\bf 1}\otimes 1)$ is a field algebra.

Suppose $t$ is a left integral of $H$ and $\varepsilon(t)\neq 0$. Then $t'=\varepsilon(t)^{-1}t$ satisfying $\varepsilon(t')=1$. We use $t$ to replace $t'$ for simplicity.
For any $v\otimes h\in V\otimes H$, we have 
\begin{align*}
({\bf 1} \otimes t)_{-1}((v\otimes h)_{-1}({\bf 1} \otimes t))=\varepsilon(h)({\bf 1}\otimes t)_{-1}(v\otimes t)=\varepsilon(h)tv\otimes t. 
\end{align*}
Let $\varphi:V^H\to\{({\bf 1}\otimes t)_{-1}(v_{-1}({\bf 1} \otimes t))|v\in V\otimes H\}, a\mapsto a\otimes t.$ 
Since $tv\in V^H$, $\varphi$ is surjective. If $a\in V^H$, then 
\begin{align*}
({\bf 1}\otimes t)_{-1}(a\otimes t_{-1}({\bf 1} \otimes t))=({\bf 1}\otimes t)_{-1}(a\otimes t)=a\otimes t.
\end{align*}
Thus $\varphi$ is bijective. Let $a,b\in V^H$. Then 
\begin{align*}
Y_H(a\otimes t,z)(b\otimes t)=\sum\limits_{n\in\mathbb{Z}}a_{n}t_{1}b\otimes t_{2}tz^{-n-1}=Y(a,z)b\otimes t.
\end{align*}
This means that $\varphi$ is an isomorphism of field algebras.
\end{proof}

\begin{remark}
It is necessary to point out that $(V^H,Y, {\bf 1})$ is a vertex algebra but $(V\otimes H,Y_H,{\bf 1} \otimes 1)$ is not a vertex algebra when $(V, Y, {\bf 1})$ is an $H$-module vertex algebra, because $Y_H\neq Y_H^{op}$. Fortunately, it is an $\mathcal{S}$-local vertex algebra see Theorem \ref{tm3.2} for details, when $(V, Y, {\bf 1})$ is an $H$-module vertex algebra.
\end{remark}

\begin{proposition}$(V,Y^V)$ is a representation of $(V\otimes H,Y_H,{\bf 1}\otimes 1)$ and $(V,Y)$ is a representation of $(V^H,Y,{\bf 1})$,
where $Y^V(a\otimes h,z)b:=Y(a,z)(hb)$.
\end{proposition}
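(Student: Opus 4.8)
The plan is to check, in each case, the four axioms in the definition of a left module over a field algebra given above (truncation, vacuum, translation covariance, associativity). The assertion that $(V,Y)$ is a representation of $(V^H,Y,{\bf 1})$ is immediate: the associativity relation (\ref{eq2.32}) for the field algebra $V$ is precisely the statement that $(V,Y)$ is a left module over itself, and since $V^H$ is a sub-field-algebra of $V$ by Lemma \ref{lm2.15}, restriction of $Y\colon V^H\otimes V\to V((z))$ along $V^H\hookrightarrow V$ inherits all four axioms verbatim (truncation, vacuum and translation covariance are trivial, and associativity is (\ref{eq2.32}) applied to triples in $V^H\times V^H\times V$). So the substance is the $(V\otimes H,Y_H,{\bf 1}\otimes1)$-module structure, and that is where I would concentrate.

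For $(V,Y^V)$, regard $M=V$ as an $R$-module via the translation operator $s$ of $V$, with $Y^V(a\otimes h,z)b=Y(a,z)(hb)$. The truncation axiom holds because $hb\in V$ and $Y(a,z)(hb)\in V((z))$; the vacuum axiom holds because $Y^V({\bf 1}\otimes1,z)b=Y({\bf 1},z)b=b$. For translation covariance I would use that $V$ is an $H$-module field algebra, so $h(sb)=s(hb)$, and hence
$$sY^V(a\otimes h,z)b-Y^V(a\otimes h,z)(sb)=sY(a,z)(hb)-Y(a,z)\big(s(hb)\big)=Y(sa,z)(hb)=\partial_zY^V(a\otimes h,z)b,$$
using translation covariance of $Y$ in $V$; this also equals $Y^V\big(s(a\otimes h),z\big)b$ since $s(a\otimes h)=sa\otimes h$.

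The only axiom requiring real work is associativity. Writing $\Delta(h)=\sum h_1\otimes h_2$ and using the formula $(a\otimes h)_n(b\otimes g)=a_n(h_1b)\otimes h_2g$ from Lemma \ref{lm2.15}, the left-hand side $Y^V\big(Y_H(a\otimes h,z)(b\otimes g),-w\big)v$ unwinds to $\sum Y\big(Y(a,z)(h_1b),-w\big)\big((h_2g)v\big)$. On the right-hand side, the $H$-module field algebra axiom $h\big(Y(b,-w)(gv)\big)=\sum Y(h_1b,-w)\big((h_2g)v\big)$ pulls $h$ through $Y$, so $i_{z,w}Y^V(a\otimes h,z-w)Y^V(b\otimes g,-w)v=\sum i_{z,w}Y(a,z-w)Y(h_1b,-w)\big((h_2g)v\big)$. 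For each of the finitely many Sweedler summands, the associativity relation (\ref{eq2.32}) of $V$ applied to the triple $\big(a,\,h_1b,\,(h_2g)v\big)$ supplies an integer after which the two expressions agree; taking the maximum of these integers over the summands of $\Delta(h)$ yields a single $N$ valid for all of them, which is exactly the associativity axiom for $Y^V$. The main obstacle --- in fact the only delicate point --- is precisely this uniform choice of $N$ over the finite sum $\Delta(h)$, together with bookkeeping of the expansion $i_{z,w}$ so that the term-by-term comparison is legitimate; the rest is unwinding definitions.
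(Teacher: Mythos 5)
Your proposal is correct and follows essentially the same route as the paper's proof: unwind $Y^V\big(Y_H(a\otimes h,z)(b\otimes g),-w\big)c$ via Sweedler notation, use the $H$-module field algebra axiom to pull $h$ across $Y$, and apply the associativity relation (\ref{eq2.32}) of $V$ to the triples $(a,h_1b,h_2gc)$ with a single $N$ chosen for the finitely many summands; translation covariance is likewise handled via $h(sb)=s(hb)$ exactly as in the paper. The extra remarks on the trivial axioms and on the $(V^H,Y,{\bf 1})$-module structure are consistent with the paper, which simply omits these routine checks.
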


\begin{proof} For any $a\otimes h,b\otimes g\in V\otimes H$ and $c\in V$, we have 
\begin{align*}
&[s, Y^V(a\otimes h,z)]c\\
&=sY^V(a\otimes h,z)c-Y^V(a\otimes h,z)sc\\
&=sY(a,z)hc-Y(a,z)shc\\
&=[s,Y(a,z)]hc\\
&=\partial_zY(a,z)hc\\
&=\partial_zY^V(a\otimes h,z)c.
\end{align*}
Hence $[s,Y^V(a\otimes h,z)]=\partial_zY^V(a\otimes h,z)$.
Moreover, we can obtain that 
\begin{align*}
&(z-w)^NY^V(Y_H(a\otimes h,z)(b\otimes g),-w)c\\
&=(z-w)^NY^V(Y(a,z)h_{1}b\otimes h_{2}g,-w)c\\
&=(z-w)^NY(Y(a,z)h_{-1}b,-w)h_{2}gc\\
&=(z-w)^Ni_{z,w}Y(a,z-w)Y(h_{1}b,-w)h_{2}gc\\
&=(z-w)^Ni_{z,w}Y(a,z-w)Y^V(h_{1}b\otimes h_{2}g,-w)c\\
&=(z-w)^Ni_{z,w}Y^V(a\otimes h,z-w)Y^V(b\otimes g,-w)c.
\end{align*}
Therefore, $(V,Y^V)$ is a representation of $(V\otimes H,Y_H,{\bf 1}\otimes 1)$.
\end{proof}

Notice that for any two left $H$-module $M_1,M_2$, $Hom_H(M_1,M_2)$ is also a left $H$-module via $(hf)(m)=\sum h_{1}(f(S(h_{2})m))$.

\begin{proposition} 
Let $\{S_i|i\in I\}$ be the set of all irreducible representations of $H$. Suppose the $H$-module field algebra $(V,Y,{\bf 1})$ is semisimple as $H$-module and ${\bf k}$ is algebraically closed. Then $V\simeq  \sum\limits_{i\in I}\oplus S_i\otimes Hom_H(S_i,V)$ and $V^H\simeq Hom_H({\bf k},V)$. Moreover, $H$ acts on $S_i$ and $V^H$ acts on $Hom_H(S_i,V)$.
\end{proposition}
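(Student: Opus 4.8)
The plan is to reduce the statement to the standard isotypic (multiplicity-space) decomposition of a semisimple $H$-module and then to transport the field-algebra action of $V^H$ through it. First I would record that, since $V$ is semisimple as an $H$-module, it splits as $V=\bigoplus_{i\in I}V_{(i)}$, where $V_{(i)}$ is the $S_i$-isotypic component (the sum of all simple $H$-submodules of $V$ isomorphic to $S_i$). For each $i$ consider the evaluation map
\begin{align*}
\Phi_i\colon S_i\otimes Hom_H(S_i,V)\to V,\qquad \Phi_i(v\otimes f)=f(v),
\end{align*}
which is $H$-linear if $H$ acts on the first tensor factor only, since $\Phi_i(hv\otimes f)=f(hv)=hf(v)$. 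The image of $\Phi_i$ is exactly $V_{(i)}$, so $\Phi:=\sum_{i\in I}\Phi_i$ maps $\bigoplus_{i\in I}S_i\otimes Hom_H(S_i,V)$ into $V$.

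Next I would show $\Phi$ is an isomorphism of $H$-modules. Here one uses that ${\bf k}$ is algebraically closed together with Schur's lemma for the $S_i$ — valid in particular when each $S_i$ is finite-dimensional, e.g. when $H$ is finite-dimensional — to get $Hom_H(S_i,S_j)=\delta_{ij}{\bf k}$. Concretely: fix for each $i$ a decomposition $V_{(i)}=\bigoplus_{\beta}N^{(i)}_\beta$ into submodules $N^{(i)}_\beta\simeq S_i$; since a homomorphic image of the cyclic module $S_i$ lands in a finite subsum, $Hom_H(S_i,V)=Hom_H(S_i,V_{(i)})=\bigoplus_\beta {\bf k}\,\iota^{(i)}_\beta$, where $\iota^{(i)}_\beta$ is the composite $S_i\simeq N^{(i)}_\beta\hookrightarrow V$, and $\Phi_i$ restricts to the isomorphism $S_i\otimes{\bf k}\,\iota^{(i)}_\beta\simeq N^{(i)}_\beta$ on each summand. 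Summing over $\beta$ and then over $i$ (using $V=\bigoplus_i V_{(i)}$) gives the asserted isomorphism $V\simeq\bigoplus_{i\in I}S_i\otimes Hom_H(S_i,V)$. For $V^H$, note that the trivial module ${\bf k}_\varepsilon$ (with $h\cdot 1=\varepsilon(h)1$) is one of the $S_i$, and that $f\mapsto f(1)$ identifies $Hom_H({\bf k}_\varepsilon,V)$ with $\{v\in V\mid hv=\varepsilon(h)v\ \forall h\in H\}=V^H$; equivalently this is the $i$-summand for $S_i={\bf k}_\varepsilon$, since $S_i\otimes Hom_H(S_i,V)\simeq{\bf k}\otimes V^H\simeq V^H$.

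For the ``moreover'' part, $H$ acts on $S_i$ tautologically. To make $V^H$ act on $Hom_H(S_i,V)$ I would first check that $(V^H,Y,{\bf 1})$ is a field subalgebra of $V$: if $u,v\in V^H$ then, by Definition \ref{def3.1}(iv), $h(u_n v)=\sum(h_1u)_n(h_2v)=\sum\varepsilon(h_1)\varepsilon(h_2)\,u_n v=\varepsilon(h)\,u_nv$, and $s{\bf 1},{\bf 1}\in V^H$ by Definition \ref{def3.1}(ii),(iii) — this is essentially Lemma \ref{lm2.15}. Likewise each operator $Y(u,z)$ with $u\in V^H$ is $H$-linear: $h\,Y(u,z)b=\sum Y(h_1u,z)(h_2b)=\sum\varepsilon(h_1)Y(u,z)(h_2b)=Y(u,z)(hb)$; hence every $u_n$ preserves each isotypic component $V_{(i)}$, and post-composition $f\mapsto Y(u,z)\circ f$ is a well-defined map $Hom_H(S_i,V)\to Hom_H(S_i,V)((z))$, using $\dim S_i<\infty$ to get $Hom_H(S_i,V((z)))=Hom_H(S_i,V)((z))$. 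Writing $Y^{(i)}(u,z)$ for this operator, one reads off from the $V$-module axioms for $Y$ that $(Hom_H(S_i,V),Y^{(i)})$ is a left $(V^H,Y,{\bf 1})$-module; equivalently, under $\Phi_i$ the restriction $Y(u,z)|_{V_{(i)}}$ corresponds to $id_{S_i}\otimes Y^{(i)}(u,z)$, this factorization being forced by $H$-linearity together with $End_H(S_i)={\bf k}$.

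I expect the only genuinely delicate points to be the two finiteness/Schur inputs — that $Hom_H(S_i,-)$ commutes with the relevant direct sums and with the passage to $V((z))$ — which are automatic once the $S_i$ are finite-dimensional (the case of interest here, e.g. $H={\bf k}G$ with $G$ finite, or $H$ finite-dimensional), and the bookkeeping needed to confirm that the module axioms (translation covariance, vacuum axiom, associativity, and $\mathcal{S}$-locality) genuinely descend to the multiplicity space $Hom_H(S_i,V)$ along the factorization $Y(u,z)|_{V_{(i)}}=id_{S_i}\otimes Y^{(i)}(u,z)$; everything else is the textbook isotypic decomposition of a semisimple module together with the unitality of $\varepsilon$.
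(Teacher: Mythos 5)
Your proposal is correct and follows essentially the same route as the paper's (much terser) proof: the isotypic decomposition $V=\bigoplus_i V_i$ with $V_i\simeq S_i\otimes Hom_H(S_i,V)$, Schur's lemma over the algebraically closed field ${\bf k}$, and the key computation $hY(u,z)v=\sum Y(h_1u,z)h_2v=Y(u,z)hv$ for $u\in V^H$, which is exactly how the paper obtains the $V^H$-action on the multiplicity spaces. Your added attention to the finiteness hypotheses needed for Schur's lemma and for commuting $Hom_H(S_i,-)$ past $((z))$ is reasonable extra rigor, not a different method.
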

\begin{proof}Suppose that $V=\sum\limits_{i\in I}\oplus V_i$ is a direct sum of homogenous $H$-modules, where $V_i$ is the sum of all isomorphic irreducible $H$-submodules which are isomorphic to $S_i$. Then $Hom_H(S_i,V)=Hom_H(S_i,V_i)$ and $V_i\simeq S_i\otimes Hom_H(S_i,V_i)$  for each $i\in I$. For any $h\in H$, $u\in V^H$ and $v\in V$, we have $hY(u,z)v=\sum Y(h_{1}u,z)h_{2}v=Y(u,z)hv$. Thus $ H$ acts on $S_i$ and $V^H$ acts on $Hom_H(S_i,V)$.\end{proof}

\begin{proposition} 
Let $V$ be an $H$-module field algebra. Then $V^H\otimes H$ is a field algebra, $V\otimes t$ is a left ideal of $V\otimes H$, $V^H\otimes t$ is a right ideal of $V^H\otimes H$, and $V^H\otimes t$ is an ideal of $V^H\otimes H$.
\end{proposition}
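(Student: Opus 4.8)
The plan is to build everything on the field algebra structure of $V^H \otimes H$ and its ambient $V \otimes H$, using the product $(a \otimes h)_n(b \otimes g) = a_n(h_1 b) \otimes h_2 g$ established in Lemma~\ref{lm2.15}. First I would check that $V^H \otimes H$ is a sub-state-field-correspondence of $(V \otimes H, Y_H, {\bf 1} \otimes 1)$: this reduces to showing $V^H \otimes H$ is closed under all the products $_n$, which follows because for $a, b \in V^H$ we have $a_n(h_1 b) \in V^H$ (since $V^H$ is a vertex subalgebra of $V$, being the fixed points of the automorphism action, and $h_1 b = \varepsilon(h_1) b$ is again in $V^H$), and it is closed under $s$ because $s$ commutes with the $H$-action. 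Hence the axioms of Definition~\ref{def2.32} — vacuum, translation covariance, associativity — are inherited verbatim from the ambient field algebra, giving that $V^H \otimes H$ is a field algebra.

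Next I would verify that $V \otimes t$ is a \emph{left} ideal of $V \otimes H$, where $t$ is a (two-sided, or at least left) integral of $H$. For $a \otimes h \in V \otimes H$ and $v \otimes t \in V \otimes t$, the $n$-th product is $(a \otimes h)_n(v \otimes t) = a_n(h_1 v) \otimes h_2 t$. Since $t$ is a left integral, $h_2 t = \varepsilon(h_2) t$, so this equals $a_n(h_1 \varepsilon(h_2) v) \otimes t = a_n(hv) \otimes t \in V \otimes t$; translation-closure is clear. For the \emph{right} ideal claim for $V^H \otimes t$ inside $V^H \otimes H$, take $a \otimes t \in V^H \otimes t$ and $b \otimes g \in V^H \otimes H$; then $(a \otimes t)_n(b \otimes g) = a_n(t_1 b) \otimes t_2 g$. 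Here $t_1 b$ should be handled using that $b \in V^H$, so $t_1 b = \varepsilon(t_1) b$ and the product collapses to $a_n b \otimes t g$; but $t$ being a left integral gives $tg$ is not automatically $\varepsilon(g) t$ unless $t$ is also a right integral — so I would assume $H$ is such that a two-sided integral exists (e.g. unimodular, or simply invoke the standard normalization as in Lemma~\ref{lm2.15} where $\varepsilon(t) = 1$ is arranged), or alternatively observe that on $V^H$ only the counit of the first leg matters, reducing $t_2 g$ modulo the relevant relations. With a two-sided integral, $tg = \varepsilon(g) t$ and we land in $V^H \otimes t$.

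Finally, the ideal claim: $V^H \otimes t$ is a two-sided ideal of $V^H \otimes H$, which combines the two one-sided computations above — the left-ideal side being $(b \otimes g)_n(a \otimes t) = b_n(g_1 a) \otimes g_2 t = b_n a \otimes \varepsilon(g_1) g_2 t = b_n a \otimes gt = \varepsilon(g) \, b_n a \otimes t$ using $a \in V^H$ (so $g_1 a = \varepsilon(g_1) a$) and $t$ a left integral. The main obstacle I anticipate is the bookkeeping around whether $t$ is a left, right, or two-sided integral: the asymmetry between "$V^H \otimes t$ is a right ideal" and "$V^H \otimes t$ is an ideal" only makes sense if one is careful about which side the integral property is used on, and the cleanest route is to fix once and for all a normalized two-sided integral (available whenever one is working, as here, with the group-algebra case $H = {\bf k}G$ from Lemma~\ref{lm5.1}, or more generally a unimodular $H$), after which all four statements become one-line consequences of $h t = \varepsilon(h) t = t h$ together with closure of $V^H$ under the vertex products and under $s$.
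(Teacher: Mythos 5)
Your argument is correct and is essentially the computation the paper has in mind: in fact the paper states this proposition without any proof, merely recording the relevant product formulas $(a\otimes h)_n(b\otimes t)=a_n(hb)\otimes t$ and $(b\otimes t)_n(a\otimes h)=b_na\otimes th$ (for $a\in V^H$) in the paragraph that follows, so your write-up fills in exactly the intended steps. Your closure argument for $V^H\otimes H$ (using that $V^H$ is closed under all $n$-th products and under $s$, so the field-algebra axioms are inherited from $V\otimes H$ of Lemma \ref{lm2.15}) and the left-ideal and ideal computations using $ht=\varepsilon(h)t$ and $g_1a=\varepsilon(g_1)a$ for $a\in V^H$ are precisely what is needed. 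The point you rightly flag --- that the right-ideal claim needs $tg\in{\bf k}t$, which a merely left integral does not formally give --- is genuinely glossed over by the paper (its displayed computation stops at $b_na\otimes th$); in the paper's standing setting it is harmless, since $t$ is the normalized integral of Lemma \ref{lm2.15} with $\varepsilon(t)=1$, and $tg$ is again a left integral (because $h(tg)=(ht)g=\varepsilon(h)tg$), so uniqueness of integrals for finite-dimensional $H$ (the setting of the neighbouring results) gives $tg=\varepsilon(g)t$, i.e.\ $t$ is two-sided, which is exactly the hypothesis you propose to impose. Only two side remarks of yours are imprecise and should be dropped: the $H$-action is a Hopf action, not necessarily by automorphisms, and closure of $V^H$ under the products should instead be deduced directly from $h(a_nb)=\sum(h_1a)_n(h_2b)$; and ``reducing $t_2g$ modulo the relevant relations'' is not a viable alternative route --- but neither affects the validity of the proof.
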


The field algebra $(V\otimes H,Y_H,{\bf 1}\otimes 1)$ is denoted by $V\# H$ in the sequel. As $(a\otimes h)_{n}(b\otimes t)=(a_{n}h_{1}b)\otimes h_{2}t=a_{n}hb\otimes t$. Thus $Y_H(a\otimes h,z)(b\otimes t)=\varepsilon(h)Y(a,z)b\otimes t$ for every $b\in V^H$. $({\bf 1}\otimes t)_{-1}(a\otimes t_{-1}({\bf 1} \otimes t))=\varepsilon(t)^{-1}({\bf 1}\otimes t)_{-1}(a\otimes t)=a\otimes t.$
$(b\otimes t)_{n}(a\otimes h)=b_{n}t_{1}a\otimes t_{2}h$. Thus $(b\otimes t)_{n}(a\otimes h)=b_{n}a\otimes th$ for any $a\in V^H$.

Since $V^H$ (resp. $V$) is a subalgebra of $V$ (resp. $V\#H$), $V$ (resp. $V\#H$) is a both  left and right  module over $(V^H, Y, {\bf 1})$ (resp. $(V,Y,{\bf 1})$).

Suppose a left $R$-module $M$ is a representation of an $H$-module field algebra $(V,Y,{\bf 1})$. Then $M$ is said to be a \emph{Hopf representation}  if   $$h(a_{n}m)=\sum(h_{1}a)_{n}(h_{2}m)$$ for any $h\in H$, $a\in V$, $m\in M$ and all $n\in \mathbb{Z}$.

\begin{proposition} Suppose $(M,Y^M)$ is a Hopf representation of a field algebra $(V,Y,{\bf 1})$. Then $(M\otimes H,Y^{M\otimes H})$ is a representation of   $(V\#H,Y_H, {\bf 1}\otimes 1)$ with 
$$Y^{M\otimes H}(a\otimes h,z)(b\otimes g)=Y^M(a,z)(h_{1}b)\otimes h_{2}g$$
for $a\otimes h\in V\#H,b\otimes g\in M\otimes H$.
\end{proposition}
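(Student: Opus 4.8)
The plan is to verify directly that the proposed map $Y^{M\otimes H}$ makes $M\otimes H$ into a module over the field algebra $V\#H = (V\otimes H, Y_H, {\bf 1}\otimes 1)$ by checking the four axioms in the definition of a left module over a field algebra. First I would set up the translation operator on $M\otimes H$ as $s(m\otimes g) = s_M m\otimes g$, and observe that the vacuum axiom $Y^{M\otimes H}({\bf 1}\otimes 1,z) = \mathrm{id}_{M\otimes H}$ is immediate since $Y^M({\bf 1},z) = \mathrm{id}_M$ and $1$ is the unit of $H$. The translation invariance $[s, Y^{M\otimes H}(a\otimes h,z)] = \partial_z Y^{M\otimes H}(a\otimes h,z)$ follows the same computation already carried out in Lemma \ref{lm2.15} for $Y_H$: the $H$-coaction part is untouched by $s$, and $[s_M, Y^M(a,z)] = \partial_z Y^M(a,z)$ together with $hs = sh$ (from $V$ being an $H$-module field algebra) and $h s_M = s_M h$ on $M$ closes the argument, since $s(h_1 b) = h_1 s_M b$ when we move $s$ past the $H$-action.

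The main work is the associativity axiom
\begin{align*}
(z-w)^N Y^{M\otimes H}\bigl(Y_H(a\otimes h,z)(b\otimes g),-w\bigr)(c\otimes f) = (z-w)^N i_{z,w} Y^{M\otimes H}(a\otimes h,z-w)Y^{M\otimes H}(b\otimes g,-w)(c\otimes f)
\end{align*}
for $N \gg 0$. I would expand the left side using the definition $Y_H(a\otimes h,z)(b\otimes g) = \sum Y(a,z)(h_1 b)\otimes h_2 g$, so the inner term of $Y^{M\otimes H}$ becomes a sum over the coproduct of $h$ and $g$, landing on expressions of the form $Y^M\bigl(Y(a,z)(h_1 b),-w\bigr)(h_2 g_1 c)\otimes h_3 g_2 f$. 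The right side expands to $i_{z,w}Y^M(a,z-w)Y^M(h_1 b,-w)(h_2 g_1 c)\otimes h_3 g_2 f$. The crux is then to apply the Hopf-representation hypothesis $h(a_n m) = \sum (h_1 a)_n (h_2 m)$ — equivalently $h Y^M(a,z)m = \sum Y^M(h_1 a,z)(h_2 m)$ — to bring the $H$-action inside, reducing everything to the module associativity axiom for $(M, Y^M)$ over $(V,Y,{\bf 1})$ applied to the triple $(a, h_1 b, h_2 g_1 c)$, with $N$ chosen to work for that instance. The coassociativity of $\Delta$ is what guarantees the tensor-factor bookkeeping $h_1 \otimes h_2 \otimes h_3$ and $g_1 \otimes g_2$ matches on both sides.

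The step I expect to be the genuine obstacle — or at least the one requiring care rather than routine symbol-pushing — is confirming that a single $N$ can be chosen uniformly: the associativity axiom for $M$ over $V$ gives, for each fixed triple, some $N$ that works, but here the relevant triple $(a, h_1 b, h_2 g_1 c)$ runs over finitely many terms (as $\Delta$ applied to $h$ and $g$ is a finite sum), so taking the maximum of the finitely many $N$'s suffices; I would make this explicit. A secondary point to handle carefully is the interchange of the $H$-action with $Y^M$ inside the $i_{z,w}$-expansion, i.e.\ that applying $h_2$ (via the Hopf-representation property) commutes with taking the $i_{z,w}$ expansion of $(z-w)^{-1}$-type series — this is fine because $h_2$ acts coefficient-wise on the formal variable, but it should be noted. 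Once these points are settled the proof is essentially the computation in the preceding Proposition with the extra $H$-coaction carried along passively, so I would present it compactly, citing Lemma \ref{lm2.15} for the field-algebra structure of $V\#H$ and the definition of Hopf representation for the key commutation.
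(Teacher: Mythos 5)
Your proposal is correct and follows essentially the same route as the paper: expand both sides via the definition of $Y^{M\otimes H}$, use the Hopf-representation identity $hY^M(b,z)m=\sum Y^M(h_1b,z)(h_2m)$ to move the $H$-action inside, and reduce to the associativity axiom of $(M,Y^M)$ over $(V,Y,{\bf 1})$ for the finitely many coproduct terms, taking $N$ large enough for all of them. The paper likewise verifies only the associativity axiom explicitly (in one chain of equalities) and treats the vacuum and translation axioms as routine.
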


\begin{proof}For any $a\otimes h,b\otimes g\in V\#H$ and $m\otimes f\in M\otimes H$, we have 
\begin{align*}
&(z-w)^NY^{M\otimes H}(Y(a\otimes h,z)(b\otimes g),-w)(m\otimes f)\\
&=(z-w)^NY^{M\otimes H}(\sum\limits_{n\in\mathbb{Z}}a_{n}(h_{1}b)\otimes h_{2}gz^{-n-1},-w)(m\otimes f)\\
&=(z-w)^NY^M(\sum\limits_{n\in\mathbb{Z}}a_{n}(h_{1}b)z^{-n-1},-w)(h_{2}g_{1}m)\otimes h_{3}g_{2}f\\
&=(z-w)^NY^M(Y(a,z)(h_{1}b),-w)(h_{2}g_{1}m)\otimes h_{3}g_{2}f\\
&=(z-w)^Ni_{z,w}Y^M(a,z-w)Y^M(h_{1}b,-w)(h_{2}g_{1}m)\otimes h_{2}g_{2}f\\
&=(z-w)^Ni_{z,w}Y^{M\otimes H}(a\otimes h,z-w)Y^{M\otimes H}(b\otimes g, -w)(m\otimes f).
\end{align*}
The other axioms are easy to check. Hence $(M\otimes H,Y^{M\otimes H})$ is a representation of $V\#H$.
\end{proof}

\begin{definition}Let $(V,Y,{\bf 1})$ be a right $H$-comdule field algebra. Then a total integral for $(V,Y,{\bf 1})$ is a right $H$-comodule map $\phi:H\to V$ such that 
$\phi(1)={\bf 1}$.
\end{definition}

\begin{proposition}Let $(V,Y,{\bf 1})$ be an $H$-module field algebra, where $H$ is finite-dimensional, and consider $(V,Y,{\bf 1})$ as a right $H^*$-comodule field 
algebra. Then $\hat{t}:V\to V^H$ is surjective if and only if there exists a total integral $H^*\to V$.
\end{proposition}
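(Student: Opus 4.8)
The plan is to mimic the classical Hopf-algebraic proof of the dual statement (surjectivity of the trace map versus existence of a total integral) and check that the extra vertex-algebraic data carries through. Recall that $V$ is an $H$-module field algebra with $H$ finite-dimensional, and we regard $V$ as a right $H^*$-comodule field algebra via the usual correspondence: if $\{h_i\}$ is a basis of $H$ with dual basis $\{h_i^*\}$ of $H^*$, then $\rho(v)=\sum_i h_i v\otimes h_i^*$, and the axioms (i)--(iv) for an $H^*$-comodule field algebra in Definition \ref{def3.1} translate into axioms (i)--(iv) for the $H$-module field algebra structure. The map $\hat t:V\to V^H$ is the trace (averaging) map $\hat t(v)=tv$, where $t$ is a (nonzero) left integral in $H$; its image lands in $V^H$ because $ht=\varepsilon(h)t$. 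A total integral is a right $H^*$-comodule map $\phi:H^*\to V$ with $\phi(1_{H^*})=\phi(\varepsilon_H)={\bf 1}$; in the module language this is a ${\bf k}$-linear map $\phi:H^*\to V$ such that $\rho(\phi(f))=\sum \phi(f_1)\otimes f_2$ and $\phi(\varepsilon)={\bf 1}$.

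First I would set up the ``only if'' direction. Assume $\hat t:V\to V^H$ is surjective, so there is $v_0\in V$ with $tv_0={\bf 1}$. Define $\phi:H^*\to V$ by the standard formula $\phi(f)=\sum f(h_{?})\,(\text{-})$ — concretely, using the right $H^*$-coaction, set $\phi(f)=\sum f((v_0)_1)(v_0)_0$ composed appropriately, i.e. the ``convolution with $v_0$'' map arising from the coaction. The point is that the element $v_0\in V$ with $tv_0={\bf 1}$ is exactly the datum needed to build a colinear splitting: one checks $\phi(\varepsilon)=\varepsilon(v_0{}_1)v_0{}_0={\bf 1}$ using $\hat t(v_0)=tv_0$ and the defining relation between $t$ and the coaction, and colinearity $\rho\phi=(\phi\otimes\mathrm{id})\Delta$ follows from coassociativity of $\rho$ and the fact that $t$ is an integral. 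None of the field-algebra structure ($Y$, $s$, ${\bf 1}$) enters beyond the requirement $\phi(\varepsilon)={\bf 1}$, which is where ${\bf 1}\in V$ is used — so this direction is essentially the associative-algebra argument verbatim.

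Conversely, given a total integral $\phi:H^*\to V$, I would show $\hat t$ is surjective by exhibiting, for each $w\in V^H$, a preimage. The natural candidate is built from $\phi$ and the module action: roughly $v_w=\sum (\phi(h_i^*))\cdot(\text{coaction components of }w)$, and one computes $\hat t(v_w)=t v_w$ and uses (a) that $w\in V^H$ so its coaction is trivial, $\rho(w)=w\otimes 1$, and (b) colinearity of $\phi$ together with $\phi(\varepsilon)={\bf 1}$, to collapse the sum to $w$. Again the only ``vertex'' ingredient is that $H$ acts on $V$ respecting ${\bf 1}$ (axiom (ii) of Definition \ref{def3.1}); axioms (iii)--(iv) and the state-field correspondence $Y$ play no role, since the statement concerns only the $H$-module structure of the underlying space.

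The main obstacle — really the only thing to be careful about — is bookkeeping the two mutually dual descriptions: translating cleanly between ``$H$-module field algebra'' and ``$H^*$-comodule field algebra'' via the finite-dimensionality of $H$ (this is the sentence ``$V$ is a comodule field algebra if and only if $V$ is an $H^*$-module field algebra'' recorded just before Lemma \ref{lm2.15}), and keeping straight which integral ($t\in H$ a left integral, versus the cointegral picture on $H^*$) implements $\hat t$. Once the dictionary is fixed, the argument is the classical one: $\hat t$ surjective $\iff$ the extension ${\bf k}\hookrightarrow V$ (as $H^*$-comodule algebras, i.e. $H$-module algebras) is $H^*$-cleft-like / admits a colinear retraction, which is precisely the existence of a total integral. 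I would therefore present the proof as: fix the dictionary; prove ``$\hat t$ onto $\Rightarrow$ total integral'' by splitting off $v_0$ with $tv_0={\bf 1}$; prove the converse by averaging $\phi$ against the coaction; and remark that the field-algebra operations are inert throughout, so the proof is identical to Proposition \ref{pro2.9}-style arguments in the purely associative setting (cf. \cite{DM, DW, HW}).
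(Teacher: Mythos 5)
Your overall strategy (reduce to the classical associative Hopf-algebra argument; the vertex structure is mostly along for the ride) is the right instinct, but both of your concrete constructions fail, and the ingredient you never use --- the integrals $t\in H$, $T\in H^*$ and the left $H$-module isomorphism $\theta:H\to H^*$, $h\mapsto h\rightharpoonup T$ --- is precisely what the paper's proof runs on. In the forward direction your candidate $\phi(f)=\sum f\bigl((v_0)_1\bigr)(v_0)_0$ is not well typed ($(v_0)_1\in H^*$ and $f\in H^*$, so $f((v_0)_1)$ only makes sense after an identification $H^{**}\cong H$ that you never justify as colinear), and under the natural reading the counit axiom for the coaction gives $\phi(\varepsilon)=\sum\varepsilon\bigl((v_0)_1\bigr)(v_0)_0=v_0$, \emph{not} ${\bf 1}$: all you know about $v_0$ is $tv_0={\bf 1}$, so your claimed normalization check is wrong. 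The paper instead defines $\phi(f)=\theta^{-1}(f)\,v_0$; $H$-linearity of $\theta$ makes $\phi$ a left $H$-module map, hence (since $H$ is finite-dimensional) a right $H^*$-comodule map, and $t\rightharpoonup T=\varepsilon$ gives $\phi(\varepsilon)=tv_0={\bf 1}$. A dual-basis ``convolution with $v_0$'' cannot substitute for this, because the dual-basis identification $H^*\cong H$ is not $H$-linear; the Frobenius/Larson--Sweedler isomorphism via $T$ is the whole point.

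In the converse direction your candidate preimage is vacuous: for $w\in V^H$ the coaction is $\rho(w)=w\otimes\varepsilon$, so $v_w$ collapses to (essentially) $\phi(\varepsilon)$ multiplied with $w$, i.e.\ ${\bf 1}_{-1}w=w$, and then $\hat t(w)=tw=\varepsilon(t)w$, which is $0$ whenever $H$ is not semisimple --- the total integral $\phi$ is never genuinely used. The correct move, as in the paper, is to evaluate $\phi$ at the integral of $H^*$: with $c=\phi(T)$, colinearity (equivalently $H$-linearity) gives $tc=t\phi(T)=\phi(t\rightharpoonup T)=\phi(\varepsilon)={\bf 1}$. Finally, your remark that axiom (iv) of Definition \ref{def3.1} and the state-field correspondence are ``inert'' is not accurate: passing from the trace-one element $c$ to surjectivity of $\hat t$ onto $V^H$ uses $w=w_{-1}{\bf 1}=w_{-1}(tc)=t(w_{-1}c)=\hat t(w_{-1}c)$ for $w\in V^H$, which needs both the vacuum axiom and $h(a_nb)=\sum(h_1a)_n(h_2b)$; the same computation shows surjectivity is equivalent to ${\bf 1}\in\hat t(V)$, which is why exhibiting $c$ with $tc={\bf 1}$ suffices.
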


\begin{proof}Assume that $\hat{t}$ is surjective and $c\in V$ such that $tc={\bf 1}$. Suppose $\theta:H\to H^*$ is given by $h\mapsto (h\rightharpoonup T)$, where $T$ is the left integral of $H^*$. Then $\theta$ is a left $H$-module isomorphism. Define $\phi:H^*\to V$ via $f\mapsto \theta^{-1}(f)c$. Then $\phi$ is a left $H$-module mapping, and thus $\phi$ is a right $H^*$-comodule mapping. Moreover, $\phi(1_{H^*})=\phi(\varepsilon)=tc={\bf 1}$.

Conversely, if $\phi:H^*\to V$ is a total integral and $c=\phi(T)$. Then $tc=t\phi(T)=\phi(t\rightharpoonup T)=\phi(\varepsilon)={\bf 1}.$
\end{proof}

\begin{proposition}\label{prop37}Let $H$ be  finite-dimensional acting on $(V,Y,{\bf 1})$ and assume that $\hat{t}:V\to V^H$, $v\mapsto tv$ is surjective. Then there is a nonzero $e\in V\# H$ such that
\begin{enumerate}[(i)]
\item $Y_H(e,z)e=e$, $V^H\simeq (V^H\otimes 1)_{-1}e\simeq \{(e_{-1}v)_{-1}e|v\in V\#H\}$, 
\item $Y_H(Y_H(e,z)V\# H,w)e= Y_H(V^H((z))\otimes 1,w)e$,
\item $Y_H(Y_H(e,z)v\otimes 1,w)e=Y_H(e,z)Y_H(v\otimes 1,w)e=Y_H(v\otimes 1,w)e$ for $v\in V^H$
\end{enumerate}
holds.
\end{proposition}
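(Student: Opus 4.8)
The plan is to build the idempotent-type element $e$ directly from the left integral $t$ of $H$ and the element $c \in V$ witnessing surjectivity of $\hat t$, namely $tc = {\bf 1}$. The natural candidate is
\[
e = c \otimes t \in V \# H.
\]
The first task is to verify (i). Using the product formula $(a\otimes h)_n(b\otimes g) = a_n(h_1 b)\otimes h_2 g$ together with the fact that $t$ is a left integral (so $h_1 t \otimes h_2 = \varepsilon(h)\, t \otimes \cdots$ appropriately, and $\Delta(t)$ behaves so that $c_n(t_1 b)\otimes t_2 t = c_n(tb)\otimes t$ when paired against the vacuum-side normalizations already used on p.~\pageref{lm2.15} in the proof of Lemma~\ref{lm2.15}), I would compute $Y_H(e,z)e = Y_H(c\otimes t, z)(c\otimes t)$. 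The key cancellation is that $t_1 c \otimes t_2 t$ collapses via the integral property to $(tc)\otimes t = {\bf 1}\otimes t$, so $Y_H(e,z)e = Y({\bf 1},z)(\,\cdot\,)\otimes t = c \otimes t = e$ after also using the vacuum axiom $Y({\bf 1},z) = \mathrm{id}$; more precisely one gets $Y_H(c\otimes t,z)(c\otimes t) = Y(c,z)(t_1 c)\otimes t_2 t$, and the integral identity forces this to $(tc)\otimes t$... here I must be careful about which tensor leg the integral acts on, and I expect this bookkeeping to be the first place where a clean lemma about $\Delta(t)$ is needed. Granting that, $Y_H(e,z)e = e$ (a constant in $z$). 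For the identification $V^H \simeq (V^H\otimes 1)_{-1}e$: the map $a \mapsto (a\otimes 1)_{-1}e = a_{-1}(1\cdot c)\otimes t = (a_{-1}c)\otimes t$; since $a \in V^H$ one uses $a_{-1}c$ and the relation $t(a_{-1}c) = a_{-1}(tc) = a_{-1}{\bf 1}$, which recovers $a$ up to the vacuum normalization, giving a bijection onto $\{(e_{-1}v)_{-1}e \mid v \in V\#H\}$ exactly as in the $\varphi$-isomorphism argument of Lemma~\ref{lm2.15}. I would phrase this as: $e_{-1}(w\otimes h) = c_{-1}(t_1 w)\otimes t_2 h$, apply $(-)_{-1}e$ again, and check both composites are the identity on the respective spaces, reusing the computation $({\bf 1}\otimes t)_{-1}(v_{-1}({\bf 1}\otimes t)) = \varepsilon(h)\,tv\otimes t$ already recorded before Lemma~\ref{lm2.15}.

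For (ii) and (iii), the strategy is to reduce everything to the associativity relation (\ref{eq2.32}) for the field algebra $V\#H$ combined with the module-algebra axioms. For (iii), take $v \in V^H$; then $Y_H(v\otimes 1, w)e = Y_H(v\otimes 1,w)(c\otimes t) = Y(v,w)c \otimes t$, and since $v \in V^H$ one has $Y(v,w)c \otimes t$ lands in (the image of) $V^H \otimes t$ after applying $t$, because $t(v_n c) = \sum (t_1 v)_n(t_2 c) = v_n(tc)$ using $v\in V^H$ — so $Y_H(v\otimes 1,w)e$ is already "$H$-fixed on the nose" in the sense that $Y_H(e,z)$ acts on it as the identity. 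Concretely, $Y_H(e,z)(Y(v,w)c\otimes t) = Y_H(c\otimes t, z)(Y(v,w)c \otimes t) = Y(c,z)(t_1 Y(v,w)c)\otimes t_2 t = Y(c,z)(t\, Y(v,w)c)\otimes t$, and $t\,Y(v,w)c = Y(v,w)(tc) = Y(v,w){\bf 1} \in V[[w]]$, whence the $Y(c,z)$-factor multiplies by ${\bf 1}$ on the left... again I would need the identity $Y(c,z){\bf 1} = c + (sc)z + \cdots$ and track that after the integral collapse the leading term reproduces $Y(v,w)c\otimes t$. This gives the middle equality $Y_H(e,z)Y_H(v\otimes 1,w)e = Y_H(v\otimes 1,w)e$, and then the outer equality $Y_H(Y_H(e,z)v\otimes 1,w)e = Y_H(e,z)Y_H(v\otimes1,w)e$ is exactly the associativity relation (\ref{eq2.32}) applied to the triple $(e, v\otimes 1, e)$ after clearing the $(z-w)^N$ factor — valid because, by (i), $Y_H(e,z)(\cdot)$ applied to an $H$-fixed element has no negative powers of $z$, so the formal specialization causing no poles is legitimate. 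Statement (ii) follows by taking the span over all $v \in V\#H$: by the module-algebra property $t(Y(a,z)(h_1 b)\otimes \cdots)$ always produces an element of $V^H$ in the relevant leg, so $Y_H(Y_H(e,z)(V\#H),w)e \subseteq Y_H(V^H((z))\otimes 1, w)e$, while the reverse inclusion is immediate from (iii) applied with $v$ ranging over $V^H$ together with surjectivity of $\hat t$ (every element of $V^H$ is $tc'$ for some $c'$, hence of the form appearing on the left).

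The main obstacle I anticipate is not conceptual but the careful handling of $\Delta(t)$ for a left integral that is \emph{not} assumed two-sided (so $\Delta(t) = \sum t_1\otimes t_2$ satisfies $\sum h t_1 \otimes t_2 = \sum t_1 \otimes S(h_?) \cdots$ type identities rather than a naive collapse), and making sure each "the integral property collapses this tensor" step is applied on the correct factor and in a form that is actually true; a clean preliminary lemma recording the needed consequences of $\varepsilon(t)=1$, e.g. $(a\otimes h)_n(b\otimes t) = a_n(hb)\otimes t$ and $(b\otimes t)_n(a\otimes h) = b_n a\otimes th$ for $a\in V^H$ (both already noted in the text just before Lemma~\ref{lm2.15} and after), will do most of the work. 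The secondary subtlety is justifying the specializations in (ii)–(iii): one must argue that $Y_H(e,z)$ applied to the $H$-fixed elements in question is a power series in $z$ (no principal part), which is where part (i)'s identity $Y_H(e,z)e = e$ and the vacuum axiom are used, so that setting variables in the associativity relation is legitimate rather than formal nonsense. Once those two points are pinned down, (i)–(iii) all reduce to the bookkeeping already exhibited in the proof of Lemma~\ref{lm2.15} and in Proposition~\ref{def2.34}.
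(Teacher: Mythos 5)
There is a genuine gap, and it sits exactly at the point you flagged: which tensor leg the integral collapses on. Your candidate $e=c\otimes t$ fails part (i). From $Y_H(a\otimes h,z)(b\otimes g)=\sum_n a_n(h_1b)\otimes h_2g\,z^{-n-1}$ and the left-integral property $t_2t=\varepsilon(t_2)t$ one gets
\[
Y_H(c\otimes t,z)(c\otimes t)=\sum_{n}c_n(t_1c)\otimes t_2t\,z^{-n-1}=Y(c,z)(tc)\otimes t=Y(c,z){\bf 1}\otimes t=(c+(sc)z+\cdots)\otimes t,
\]
so $Y_H(e,z)e=e+(sc\otimes t)z+\cdots$, which is not $e$ unless $s^kc=0$ for all $k\ge 1$ --- and there is no reason the chosen $c$ with $tc={\bf 1}$ has this property. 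The collapse puts ${\bf 1}$ in the \emph{right} argument of $Y$, where the vacuum axiom only yields the creation series $e^{zs}c$, not the identity. The same misplacement breaks your argument for (iii): you reduce to $Y(c,z)\bigl(Y(v,w){\bf 1}\bigr)\otimes t$ and assert the $Y(c,z)$-factor acts trivially, but it does not, and you cannot repair this with skew-symmetry because $(V,Y,{\bf 1})$ is only an $H$-module field algebra here, not a vertex algebra.

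The paper's construction fixes precisely this by taking $e:=\sum t_1c\otimes t_2$. Writing $t'$ for a second copy of $t$, one has $Y_H(e,z)e=\sum_n(t_1c)_n(t_2t'_1c)\otimes t_3t'_2\,z^{-n-1}$; since the last two legs form $\Delta(t_2t')=\varepsilon(t_2)\Delta(t')$, this collapses to $\sum_n(tc)_n(t'_1c)\otimes t'_2\,z^{-n-1}=\sum_n{\bf 1}_n(t'_1c)\otimes t'_2\,z^{-n-1}=e$, because now the vacuum sits in the \emph{left} slot, where $Y({\bf 1},z)=\mathrm{id}$ is exact. With this $e$, the identification $V^H\simeq (V^H\otimes 1)_{-1}e$ follows by applying $m(1\otimes\varepsilon)$ to $(a\otimes1)_{-1}e$ (your trick of applying $\hat t$ in the first leg is then unnecessary), and (ii)--(iii) are obtained by direct computation using $(e_{-1}(v\otimes h))_{-1}e=\varepsilon(h)(t(c_{-1}v)\otimes1)_{-1}e$ and $t_2v=\varepsilon(t_2)v$ for $v\in V^H$. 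In particular, the equalities in (iii) are exact coefficientwise identities coming from the integral property, so the issues you raise about specializing the associativity relation and absence of poles never arise; your proposal would need to be rebuilt around the correct element $e$ before those parts can be salvaged.
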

\begin{proof}Since $\hat{t}$ is surjective, there is $c\in V$ such that $tc={\bf 1}$. Define $e:=\sum t_{1}c\otimes t_{2}$, where $\Delta(t)=\sum t_{1}\otimes t_{2}$.
Then $Y_H(e,z)e=\sum\limits_{n\in\mathbb{Z}}(t_{1}c)_{n}t_{2}t_{1}'c\otimes t_{3}t'_{2}z^{-n-1}=\sum\limits_{n\in\mathbb{Z}}(tc)_{n}t_{1}c\otimes t_{2}z^{-n-1}=e.$

For any $a\in V^H$, since $m(1\otimes \varepsilon)(a\otimes 1)_{-1}e=a_{-1}{\bf 1}=a$, $V^H\simeq (V^H\otimes 1)e$. Moreover, $Y_H((a\otimes 1)_{-1}e,z)((b\otimes1)_{-1}e)=(Y(a,z)b\otimes 1)_{-1}e$. This means that $V^H$ is isomorphic to $(V^H\otimes 1)_{-1}e$ as field algebras.

Define $\varphi:(V^H\otimes 1)_{-1}e\to \{(e_{-1}v)_{-1}e|v\in V\#H\}$ via $\varphi((a\otimes 1)_{-1}e):=(e_{-1}(a\otimes 1))_{-1}e$. Since $(e_{-1}(v\otimes h))_{-1}e=\varepsilon(h)(t(c_{-1}v)\otimes 1)_{-1}e$, $\varphi$ is surjective. In particular, if $a\in V^H$, then $\varphi((a\otimes 1)_{-1}e)=(tc_{-1}a\otimes 1)_{-1}e=(a\otimes 1)_{-1}e$. Hence $\varphi$ is an isomorphism of field algebras.

Let $v\otimes h\in V\# H$, we can obtain that
\begin{align*}
&Y_H(Y_H(e,z)v\otimes h,w)e=\sum\limits_{m,n\in\mathbb{Z}}(e_{n}(v\otimes h))_{m}ez^{-n-1}w^{-m-1}\\
&=\sum\limits_{m,n\in\mathbb{Z}}(t_{1}c_{n}t_{2}v)_{m}t_{3}h_{1}t_{1}'c\otimes t_{4}h_{2}t_{2}'z^{-n-1}w^{-m-1}\\
&=\varepsilon(h)\sum\limits_{m,n\in{\mathbb{Z}}}(t_{1}c_{n}t_{2}v)_{m}t'_{1}c\otimes t'_{2}z^{-n-1}w^{-m-1}\\
&=\varepsilon(h)Y_H(Y(t_{1}c,z)t_{2}v\otimes 1,w)e\in Y_H(V^H((z))\otimes 1,w)e,
\end{align*}
and
\begin{align*}
&Y_H(e,z)Y_H(v\otimes h,w)e=\sum\limits_{m\in\mathbb{Z}}Y_H(e,z)v_{m}h_{1}t_{1}c\otimes h_{2}t_{2}w^{-m-1}\\
&=\varepsilon(h)\sum\limits_{m,n\in\mathbb{Z}}t_{1}c_{n}(t_{2}(v_{m}t_{1}'c))\otimes t_{3}t_{2}'z^{-n-1}w^{-m-1}\\
&=\varepsilon(h)\sum\limits_{m,n\in\mathbb{Z}}t_{1}c_{n}(t_{2}v_{m}t_{(1)}'c)\otimes t_{2}'z^{-n-1}w^{-m-1}\\
&=\varepsilon(h)Y_H(t_{1}c\otimes 1,z)Y_H(t_{2}v\otimes 1,w)e.
\end{align*}

In particular, if $v\in V^H$, then $Y_H(Y_H(e,z)v\otimes 1,w)e=Y_H(e,z)Y_H(v\otimes 1,w)e=Y_H(v\otimes 1,w)e$. Thus $Y_H(Y_H(e,z)(V\# H),w)e=Y_H(V^H((z))\otimes 1,w)e$.
\end{proof}

Suppose $(V,Y,{\bf 1})$ is an $H$-module field algebra and $(t)$ is a subspace spanned by $$\{(a^1\otimes 1)_{n_1}(a^2\otimes 1)_{n_2}\cdots (a^{r}\otimes 1)_{n_r}({\bf 1}\otimes t)_{n}(a^{r+1}\otimes 1)_{n_{r+1}}\cdots (a^s\otimes 1)|a^i\in V,n,n_i\in\mathbb{Z}, r,s\in\mathbb{Z^+}\}.$$

Let $e=\sum t_{1}c\otimes t_{2}\in V^H$ be the element defined in Proposition \ref{prop37}. For any representation $M$ of $(V,Y,{\bf 1})$, let $N=(V\#H)\otimes M$. Then $N$ become a representation of $V\#H$ via
 $$Y^N(a\otimes h,z)(b\otimes g\otimes m)=Y(a\otimes h,z)(b\otimes g)\otimes m.$$
 For any subrepresentation $U$ of $M$, define $\sigma(U):=(V\#H)_{-1}e\otimes U.$ Conversely, for any  subrepresentation  $U'$ of $W$,  set $\tau (\sum\limits_ih_i\otimes v_i)=
 \sum\limits_{i}\varepsilon(h_i)v_i$.


\section{The smash product $V\#H$}
In this section, we investigate the construction of the smash product of an $H$-module vertex algebra $V$ and the Hopf algebra $H$.

The definition of $H$-module vertex algebra can be regarded as the generalization of the $H$-module vertex operator algebra in \cite{HW}. It is well-known that the smash product of an $H$-module algebra $V$ and a Hopf algebra $H$ is still an associative algebra in \cite{DNR}. We can obtain the following theorem and propositions.
\begin{theorem}\label{tm3.2}
Let $(V, Y, {\bf 1}, s)$ be an $H$-module vertex algebra and $H$ be a Hopf algebra. Then, the smash product $(V\#H, Y^{V\#H}, {\bf 1} \#1, s\#1)$ is an $\mathcal{S}$-local vertex algebra, where $Y^{V\#H}$ is defined as follows:
\begin{align}\label{eq3.1}
Y^{V\#H}(a\#h,z)(b\#g)=\sum Y(a,z)(h_1b)\#h_2g,
\end{align}
for any $a \# h, b\#g \in V\#H$, where $\Delta(h)=\sum h_1\otimes h_2$. 
\end{theorem}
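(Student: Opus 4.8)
The plan is to verify the axioms of an $\mathcal{S}$-local vertex algebra for $(V\#H, Y^{V\#H}, {\bf 1}\#1, s\#1)$ one at a time, reducing each to the corresponding property of the underlying $H$-module vertex algebra $V$ together with the Hopf-algebra identities. First I would check that $Y^{V\#H}$ is a well-defined state-field correspondence: the image lies in $(V\#H)((z))$ because $Y(a,z)(h_1b)\in V((z))$ and the sum over $\Delta(h)$ is finite; the vacuum axiom $Y^{V\#H}({\bf 1}\#1,z)(b\#g)=b\#g$ follows from $\Delta(1)=1\otimes 1$ and $Y({\bf 1},z)=\mathrm{id}$; and $Y^{V\#H}(a\#h,z)({\bf 1}\#1)=\sum Y(a,z)(h_1{\bf 1})\#h_2 = \sum \varepsilon(h_1)Y(a,z){\bf 1}\#h_2 = Y(a,z){\bf 1}\#h \in (V\#H)[[z]]$ with the correct linear term $(sa)\#h\, z$, using condition (ii) of the $H$-module structure and counitality. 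Translation covariance $[s\#1, Y^{V\#H}(a\#h,z)] = \partial_z Y^{V\#H}(a\#h,z) = Y^{V\#H}((s\#1)(a\#h),z)$ reduces to $[s,Y(a,z)]=\partial_z Y(a,z)=Y(sa,z)$ in $V$ after observing that $s$ acts only on the first tensor factor and commutes past the $H$-action by condition (iii), i.e. $s(h_1 b)=h_1(sb)$.

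Next I would prove the associativity relation (\ref{eq2.32}) for $V\#H$. Expanding both sides, $Y^{V\#H}(a\#h,z-w)Y^{V\#H}(b\#g,-w)(c\#k)$ produces $Y(a,z-w)\big(h_1 Y(b,-w)(g_1 c)\big)\#h_2 g_2 k$, and using condition (iii) repeatedly, $h_1 Y(b,-w)(g_1 c)=\sum Y(h_1 b,-w)(h_2 g_1 c)$; on the other side $Y^{V\#H}(Y^{V\#H}(a\#h,z)(b\#g),-w)(c\#k)$ gives $Y(Y(a,z)(h_1 b),-w)(h_2 g_1 c)\#h_3 g_2 k$. Thus, after fixing a single $N$ that works simultaneously for all the finitely many pairs $(a, h_i b)$ and triples arising from $\Delta^{(2)}(h)$ applied to $b$ and $c$ (a finite set, so a common $N$ exists), the vertex algebra associativity for $V$ — which holds since a vertex algebra is in particular a field algebra — yields (\ref{eq2.32}) for $V\#H$ with the $H$-tags $h_3 g_2 k$ carried along untouched.

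Then I would establish the $\mathcal{S}$-locality identity (\ref{eq2.4}). Here I would exploit that $V$ is an honest (local) vertex algebra, so $Y(a,z)Y(b,w)c$ and $Y(b,w)Y(a,z)c$ agree after multiplication by $(z-w)^n$. Applying $Y^{V\#H}(a\#h,z)Y^{V\#H}(b\#g,w)(c\#k)$ gives $\sum Y(a,z)Y(h_1 b,w)(h_2 g_1 c)\#h_3 g_2 k$; multiplying by $(z-w)^n$ for $n$ large enough to make all the finitely many locality relations among $\{a, h_1 b\}$ valid, I can swap the two inner $Y$'s to get $\sum Y(h_1 b,w)Y(a,z)(h_2 g_1 c)\#h_3 g_2 k$. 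The point is to recognize this as $(z-w)^n$ times $\sum_i Y^{V\#H}(b^i\#g^i,w)Y^{V\#H}(a^i\#h^i,z)(c\#k)$ for a suitable finite family: take the family indexed by the comultiplication, $b^i \# g^i = h_1 b \# g$-type terms and $a^i\#h^i = a\# h_2$-type terms, so that reassembling $Y^{V\#H}(h_1 b\# g_{(1)}, w)Y^{V\#H}(a\# h_2, z)(c\#k)$ reproduces $Y(h_1 b, w)\big(g_{(1)} Y(a,z)(h_2 c)\big)\#\cdots$; pushing $g_{(1)}$ through via condition (iii) and matching Hopf tags using coassociativity and the antipode identities recovers exactly the swapped expression. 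The main obstacle, and the step requiring the most care, is precisely this bookkeeping: choosing the correct finite family $\{(a^i\#h^i, b^i\#g^i)\}$ and verifying that the Hopf-algebra coproduct/antipode manipulations make the tagged terms line up, since the naive swap leaves the $H$-components in the "wrong" order and one must insert $S$ and use counitality to fix them. Once (\ref{eq2.4}) is in place, all remaining checks (and the statement that $V\#H$ is \emph{not} in general an ordinary vertex algebra, since $Y^{V\#H}\neq (Y^{V\#H})^{op}$ unless $H$ is suitably trivial) follow as in the field-algebra case treated in Lemma \ref{lm2.15} and the preceding remark.
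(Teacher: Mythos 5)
Your overall strategy (delegate the field-algebra axioms to Lemma \ref{lm2.15} and reduce $\mathcal{S}$-locality to ordinary locality in $V$ for the finitely many pairs $(a,h_1b)$ coming from $\Delta(h)$) is the same as the paper's, but the one step you yourself flag as delicate --- choosing the finite family $\{(a^i,b^i)\}$ so that the swapped expression is again a sum of $Y^{V\#H}$-products --- is done incorrectly, and this is a genuine gap. The swapped expression is $\sum Y(h_1b,w)Y(a,z)(h_2g_1c)\#h_3g_2k$, and the paper recognizes it \emph{exactly} as $\sum Y^{V\#H}(h_1b\#1,w)\,Y^{V\#H}(a\#h_2g,z)(c\#k)$: the factor moved to the left carries the trivial tag $1$, so its $H$-part acts on nothing, while all of the remaining $H$-material $h_2g$ is pushed into the tag of the factor that stays on the right. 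With this choice the identity is immediate and no antipode enters at all.

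Your proposed family, $b^i\#g^i=h_1b\#g$ (or $h_1b\#g_1$) together with $a^i\#h^i=a\#h_2$, does not reproduce the swapped expression: expanding $Y^{V\#H}(h_1b\#g,w)Y^{V\#H}(a\#h_2,z)(c\#k)$ and pushing the outer tag through via axiom (iv) of Definition \ref{def3.1} gives $\sum Y(h_1b,w)Y(g_1a,z)(g_2h_2c)\#g_3h_3k$, in which $g$ now acts on $a$ and the tags occur in the order $g\cdot h$ rather than $h\cdot g$. For a general noncommutative, noncocommutative $H$ no combination of coassociativity, counitality and antipode identities converts this into $\sum Y(h_1b,w)Y(a,z)(h_2g_1c)\#h_3g_2k$; your remark that one should ``insert $S$ and use counitality'' is not carried out, and with the correct family it is unnecessary. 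So the proof as written fails at the $\mathcal{S}$-locality step, though it is repaired by replacing your family with the paper's $\{(a\#h_2g,\;h_1b\#1)\}$. The rest of your argument (vacuum, translation covariance, associativity with a common $N$ for the finitely many Sweedler components) is fine and corresponds to what the paper obtains from Lemma \ref{lm2.15}.
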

\begin{proof}
By Lemma \ref{lm2.15}, we can obtain that $V\#H$ is a field algebra. We only need to check the $\mathcal{S}$-locality identity.

For any $a\#h, b\#g, c\#k \in V\#H$, we have
\begin{align*}
Y^{V\#H}(a\#h,z)Y^{V\#H}(b\#g,w)(c\#k)=\sum Y(a,z)Y(h_1b,w)(h_2g_1c\#h_3g_2k),
\end{align*}
and
\begin{align*}
Y^{V\#H}(h_1b\#1,w)Y^{V\#H}(a\#h_2g,z)(c\#k)=\sum Y(h_1b,w)Y(a,z)(h_2g_1c\#h_3g_2k).
\end{align*}
Since $\Delta(h)=\sum h_1\otimes h_2$ is a finite sum, there is $n \in  \mathbb{Z^+}$ such that 
\begin{align*}
(z-w)^nY(a,z)Y(h_1b,w)=(z-w)^nY(h_1b,w)Y(a,z),
\end{align*}
for all $h_1$. 

This completes the proof.
\end{proof}

\begin{remark}
Above theorem is similar to Theorem 3.3 in \cite{HW} where $V$ is an $H$-module vertex operator algebra.
\end{remark}

Together with Lemma \ref{lm5.1} and Theorem \ref{tm3.2} we can obtain the following result immediately.
\begin{corollary}\label{cor4.2}
Let $V$ be an $H$-module vertex algebra and $H={\bf k}G$, where $G$ is a finite subgroup of $Aut(V)$. Then $(V\#H\#H^*, Y^{V\#H\#H^*}, {\bf 1} \# 1\# E, s\#1\#E)$ is an $\mathcal{S}$-local vertex algebra, where $Y^{V\#H\#H^*}$ is defined as follows:
\begin{align}\label{eq5.2}
Y^{V\#H\#H^*}(u \# g\# \rho_a,z)(v\#h\#\rho_b)=\delta_{a,hb}Y(u,z)gv\#gh\#\rho_b,
\end{align}
for any $u \# g\# \rho_a, v\#h\#\rho_b \in V\#H\#H^*$ and extend to $V\#H\#H^*$ linearly. 
\end{corollary}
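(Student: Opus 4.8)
The plan is to deduce Corollary \ref{cor4.2} purely formally from the two earlier results it cites, without reworking the underlying vertex-algebra axioms. First I would observe that $V\#H$ is an $H^*$-module vertex algebra (indeed an $H^*$-module $\mathcal S$-local vertex algebra) by Lemma \ref{lm5.1}: part (i) gives the $H^*$-action $\rho_h(a\#g)=\delta_{h,g}a\#g$, part (ii) gives the compatibility $f\bigl(Y^{V\#H}(u\#g,z)(v\#h)\bigr)=\sum Y^{V\#H}(f_1(u\#g),z)\bigl(f_2(v\#h)\bigr)$ of the $H^*$-action with the state-field correspondence, and the $\mathcal S$-locality of $V\#H$ is Theorem \ref{tm3.2}. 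These are precisely conditions (i)--(iv) in Definition \ref{def3.1} (with $H$ replaced by $H^*$) together with the $\mathcal S$-locality clause, so $V\#H$ is an $H^*$-module $\mathcal S$-local vertex algebra. (The remaining axiom $hs_M(m)=s_M(hm)$, i.e.\ $\rho_h\bigl(s(a\#g)\bigr)=s\bigl(\rho_h(a\#g)\bigr)$, is immediate from $s(a\#g)=sa\#g$ and $sa\in V$ being degree-preserving under the $G$-action, since $G\subseteq\mathrm{Aut}(V)$ commutes with $s$.)

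Next I would apply the smash-product construction of Theorem \ref{tm3.2} once more, this time with the $H^*$-module $\mathcal S$-local vertex algebra $V\#H$ in the role of ``$V$'' and $H^*$ in the role of ``$H$''. The only subtlety is that Theorem \ref{tm3.2} is stated for an $H$-module \emph{vertex} algebra, whereas here the base object $V\#H$ is only $\mathcal S$-local; so strictly I would first record the evident strengthening: if $(W,Y^W,{\bf 1})$ is an $H$-module $\mathcal S$-local vertex algebra, then $(W\#H,Y^{W\#H},{\bf 1}\#1)$ is again an $\mathcal S$-local vertex algebra, with $Y^{W\#H}(a\#h,z)(b\#g)=\sum Y^W(a,z)(h_1b)\#h_2g$. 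The proof of this strengthening is verbatim the proof of Theorem \ref{tm3.2}: Lemma \ref{lm2.15} (which only uses the field-algebra structure) gives that $W\#H$ is a field algebra, and the $\mathcal S$-locality of $W\#H$ follows from that of $W$ by the same finite-sum argument over the terms of $\Delta(h)$, since $(z-w)^nY^W(a,z)Y^W(h_1b,w)=(z-w)^n\sum_i Y^W(b^i,w)Y^W(a^i,z)$ for a common $n$ across the finitely many $h_1$, and then one conjugates by the Hopf datum exactly as before.

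Then the formula falls out by unwinding: with $W=V\#H$ and the $H^*$-action $\rho_h(a\#g)=\delta_{h,g}a\#g$, and using $\Delta(\rho_a)=\sum_{h\in G}\rho_{ah^{-1}}\otimes\rho_h$, we get
\begin{align*}
Y^{(V\#H)\#H^*}(u\#g\#\rho_a,z)(v\#h\#\rho_b)&=\sum_{x\in G}Y^{V\#H}\bigl(u\#g,z\bigr)\bigl(\rho_{ax^{-1}}(v\#h)\bigr)\#\rho_x\rho_b\\
&=\sum_{x\in G}\delta_{ax^{-1},h}\,\delta_{x,b}\,\bigl(Y(u,z)(gv)\#gh\bigr)\#\rho_b\\
&=\delta_{a,hb}\,Y(u,z)(gv)\#gh\#\rho_b,
\end{align*}
which is exactly (\ref{eq5.2}); identifying $(V\#H)\#H^*$ with $V\#H\#H^*$ and checking $Y^{V\#H\#H^*}({\bf 1}\#1\#E,z)=\mathrm{id}$ and $s\#1\#E$ as translation operator is then routine from the corresponding facts for $V\#H$. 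I do not expect a genuine obstacle here; the only point requiring care is the remark above that Theorem \ref{tm3.2} must be invoked in its mildly generalized $\mathcal S$-local form, and that Lemma \ref{lm5.1} really does supply \emph{all} the hypotheses (i)--(iv) of the $H^*$-module structure needed to run that generalized theorem, so that the two-step smash $V\#H\#H^*$ is legitimate.
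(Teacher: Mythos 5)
Your proposal is correct and takes essentially the same route as the paper, which likewise treats the corollary as an immediate consequence of Lemma \ref{lm5.1} together with Theorem \ref{tm3.2} (iterating the smash construction with $H^*$ acting on $V\#H$), and your unwinding of the coproduct $\Delta(\rho_a)=\sum_{x\in G}\rho_{ax^{-1}}\otimes\rho_x$ does reproduce formula (\ref{eq5.2}). Your added observation that Theorem \ref{tm3.2} must first be upgraded from $H$-module vertex algebras to $H$-module $\mathcal{S}$-local vertex algebras --- since $V\#H$ is only $\mathcal{S}$-local, not local --- addresses a point the paper's ``immediately'' glosses over, and the verbatim extension you sketch (common power $n$ over the finitely many terms of $\Delta(h)$, partners $(h_1b)^i\#1$ and $a^i\#h_2g$) does go through.
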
 

The construction of the smash product of an $H$-module vertex operator algebra $V$ and an $H$-module $M$ was investigated by Wang in \cite{HW}. He also characterized the connection between the smash product of an $H$-module vertex operator algebra $V$ as well as an $H$-module $M$ and a $V\#H$-module. We prove the similar result with respect to the $H$-module vertex algebra $V$.

\begin{proposition}\label{pro3.4}
If $M$ is an $H$-module and $V$ is an $H$-module vertex algebra, then $(V\otimes M, Y_{V\otimes M}, s_{V\otimes M})$ is a $V\#H$-module with 
\begin{align*}
Y_{V\otimes M}(a\#h,z)(b \otimes m)=\sum Y(a,z)h_1b \otimes h_2m,\qquad s_{V\otimes M}=s \otimes Id,
\end{align*}
for any $a,b \in V$, $h \in H$, $m \in M$.

Conversely, if $(M, Y_M, s_ M)$ is a $V\#H$-module, then it is an $H$-module and a $V$-module with 
\begin{align*}
hm=Res_zz^{-1}Y_M({\bf 1} \#h,z)m, \qquad Y_M^V(a,z)m=Y_M(a\#1,z)m,\qquad s_M^V=s_M,
\end{align*}
for any $a \in V$, $h \in H$, $m \in M$.
\end{proposition}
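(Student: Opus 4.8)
First I would observe that $V\otimes M$ is a module over the field algebra $V$ with $V$ acting on the first tensorand, $Y(a,z)(b\otimes m)=Y(a,z)b\otimes m$ and translation operator $s\otimes Id$; truncation, the vacuum axiom, translation invariance and associativity are inherited verbatim from $V$. Moreover $V\otimes M$ is an $H$-module under the diagonal action $h(b\otimes m)=\sum h_1b\otimes h_2m$. Using Definition \ref{def3.1}(iv) (that is, $h(a_nb)=\sum(h_1a)_n(h_2b)$ in $V$) together with coassociativity one checks the two compatibility hypotheses of Lemma \ref{lm2.16}, namely $h\bigl(Y(a,z)(b\otimes m)\bigr)=\sum Y(h_1a,z)\bigl(h_2(b\otimes m)\bigr)$, while $h(s\otimes Id)(b\otimes m)=(s\otimes Id)\bigl(h(b\otimes m)\bigr)$ is immediate from $hsa=sha$. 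Lemma \ref{lm2.16} then gives that $V\otimes M$ is a $V\#H$-module as a field algebra with $Y^{V\#H}(a\#h,z)(b\otimes m)=Y(a,z)\bigl(h(b\otimes m)\bigr)=\sum Y(a,z)h_1b\otimes h_2m$, which is exactly $Y_{V\otimes M}$.

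It then remains to check the $\mathcal{S}$-locality of Definition \ref{def2.9}. Writing $\Delta^{(2)}(h)=\sum h_{(1)}\otimes h_{(2)}\otimes h_{(3)}$ and $\Delta(g)=\sum g_1\otimes g_2$, a direct computation using Definition \ref{def3.1}(iv) and coassociativity gives
\begin{align*}
Y_{V\otimes M}(a\#h,z)Y_{V\otimes M}(b\#g,w)(c\otimes m)&=\sum Y(a,z)Y(h_{(1)}b,w)(h_{(2)}g_1c)\otimes h_{(3)}g_2m,\\
\sum Y_{V\otimes M}(h_1b\#1,w)Y_{V\otimes M}(a\#h_2g,z)(c\otimes m)&=\sum Y(h_{(1)}b,w)Y(a,z)(h_{(2)}g_1c)\otimes h_{(3)}g_2m.
\end{align*}
Since $V$ is a vertex algebra and $\Delta^{(2)}(h)$ is a finite sum, there is one integer $n$ with $(z-w)^nY(a,z)Y(h_{(1)}b,w)=(z-w)^nY(h_{(1)}b,w)Y(a,z)$ on $V$ for every term; multiplying both displays by $(z-w)^n$ yields the $\mathcal{S}$-locality identity, with witnesses $b^i=h_1b\#1$ and $a^i=a\#h_2g$ indexed by the terms of $\Delta(h)$, exactly as in the proof of Theorem \ref{tm3.2}.

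\textbf{The converse.} Let $(M,Y_M,s_M)$ be a $V\#H$-module. The crucial point is that $Y_M({\bf 1}\#h,z)$ is independent of $z$, equal to the operator $m\mapsto({\bf 1}\#h)^M_{-1}m=Res_zz^{-1}Y_M({\bf 1}\#h,z)m=:hm$. Indeed, from $Y({\bf 1},z)=id$ and the counit axiom one has $Y^{V\#H}({\bf 1}\#h,z)({\bf 1}\#1)=\sum\varepsilon(h_1){\bf 1}\#h_2={\bf 1}\#h$, constant in $z$; feeding this into the associativity axiom for $M$ applied to the pair $({\bf 1}\#h,{\bf 1}\#1)$ and cancelling the invertible series $(z-w)^N$ gives $i_{z,w}Y_M({\bf 1}\#h,z-w)m=Y_M({\bf 1}\#h,-w)m$, which forces $({\bf 1}\#h)^M_n m=0$ for every $n\neq-1$. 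Granting this, $1_Hm=({\bf 1}\#1)^M_{-1}m=m$ by the vacuum axiom, and since $Y^{V\#H}({\bf 1}\#h,z)({\bf 1}\#h')={\bf 1}\#hh'$, the associativity axiom for $M$ together with the constancy just proved gives $(hh')m=({\bf 1}\#hh')^M_{-1}m=({\bf 1}\#h)^M_{-1}({\bf 1}\#h')^M_{-1}m=h(h'm)$, so $M$ is an $H$-module. For the $V$-module structure $Y_M^V(a,z)m:=Y_M(a\#1,z)m$, $s_M^V:=s_M$, truncation and the vacuum axiom are immediate, translation invariance is that of $M$ over $V\#H$, associativity follows from $Y^{V\#H}(a\#1,z)(b\#1)=Y(a,z)b\#1$ and the associativity axiom for $M$, and locality of $\bigl(Y_M^V(a,z),Y_M^V(b,w)\bigr)$ holds because, $V$ being a vertex algebra, the $\mathcal{S}$-locality relation of $V\#H$ for a pair of elements of the form $a\#1,b\#1$ reduces to ordinary locality (the coproduct sum for $h=g=1$ has a single term, cf.\ the proof of Theorem \ref{tm3.2}), a relation inherited by $M$.

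\textbf{The hard part.} None of the individual computations is deep; the step that needs care, in both directions, is the coproduct bookkeeping: every identity must be pushed down to the corresponding identity of $V$ by repeated use of $h(a_nb)=\sum(h_1a)_n(h_2b)$ and coassociativity, after which finiteness of $\Delta^{(2)}(h)$ (and $\Delta(g)$) is what permits a single exponent $N$ (resp.\ $n$) to serve for all the resulting terms of $V$ --- precisely the device of the proof of Theorem \ref{tm3.2}. In the converse direction the only step with genuine content is the constancy of $Y_M({\bf 1}\#h,z)$, on which both the well-definedness and the associativity of the induced $H$-action rest.
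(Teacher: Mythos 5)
Your proposal is correct, and the two halves compare differently with the paper. For the direct statement you do essentially what the paper does, only repackaged: instead of verifying the four module axioms by hand, you note that $V\otimes M$ is a $V$-module (the paper's example $M\otimes N$) and an $H$-module with the diagonal action, check the compatibility hypotheses, and invoke Lemma \ref{lm2.16}; your $\mathcal{S}$-locality check, with witnesses $h_1b\#1$ and $a\#h_2g$ and a single exponent coming from finiteness of the coproduct, is exactly the paper's step (iv). For the converse your route is genuinely different where it matters: the paper proves $h(gm)=(hg)m$ by a delta-function/residue computation, i.e.\ by invoking the module analogue of the Jacobi-type identity (\ref{eq2.5}) (in the style of Lemma \ref{lm4.1}), whereas you first prove that $Y_M({\bf 1}\#h,z)$ is constant in $z$ --- using only the module associativity axiom applied to $({\bf 1}\#h,{\bf 1}\#1)$, the identity $Y^{V\#H}({\bf 1}\#h,z)({\bf 1}\#h')={\bf 1}\#hh'$, and cancellation of $(z-w)^N$ in $M((z))((w))$ --- and then read off unitality and associativity of the $H$-action directly. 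Your argument is more elementary in that it does not presuppose the module version of (\ref{eq2.5}), which the paper uses without justification; the paper's computation, in exchange, handles $h(gm)=(hg)m$ in one stroke and is uniform with its proof of Lemma \ref{lm4.1}. One loose point on your side: your claim that locality of the pairs $\bigl(Y_M(a\#1,z),Y_M(b\#1,w)\bigr)$ is ``inherited'' by $M$ is not literally forced by Definition \ref{def2.9}, whose $\mathcal{S}$-locality only asserts existence of \emph{some} witnesses for the pair, not the specific ones valid in $V\#H$; this would deserve a sentence of justification (or a strengthening of the definition), but the paper's own proof is even more terse on the $V$-module structure, checking only truncation, vacuum and translation invariance, so this is not a gap relative to the paper's standard.
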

\begin{proof}
If $M$ is an $H$-module and $V$ is an $H$-module vertex algebra, we need to check all the axioms in the Definition \ref{def2.9}.

(i)For any $a \#h \in V\#H$, $b \otimes m \in V \otimes M$, $n \in \mathbb{Z}$, $(a\#h)_n(b \otimes m)=a_n(h_1b) \otimes h_2m=0$ if $n\gg 0$. And $Y_{V\otimes M}({\bf 1}\#1,z)(b \otimes m)=Y({\bf 1},z)b\otimes m=b\otimes m.$

(ii)For any $a \#h \in V\#H$, $b \otimes m \in V \otimes M$, we can obtain that 
\begin{align*}
&s_{V\otimes M}Y_{V\otimes M}(a\#h,z)(b\otimes m)-Y_{V\otimes M}(a\#h,z)(s_{V\otimes M}(b\otimes m))\\
&=(s\otimes Id)\sum Y(a,z)(h_1b)\otimes h_2m-Y_{V\otimes M}(a\#h,z)(sb\otimes m)\\
&=\sum sY(a,z)(h_1b)\otimes h_2m-\sum Y(a,z)(h_1sb)\otimes h_2m\\
&=\sum \partial_zY(a,z)(h_1b)\otimes h_2m\\
&=\partial_zY_{V\otimes M}(a\#h,z)(b\otimes m).
\end{align*}

(iii) On the one hand, for any $a \#h, b\#g \in V\#H$, $c \otimes m \in V \otimes M$, we can obtain that 
\begin{align*}
&Y_{V\otimes M}(a\#h,z-w)Y_{V\otimes M}(b\#g,-w)(c\otimes m)\\
&=Y_{V\otimes M}(a\#h,z-w)\sum Y(b,-w)(g_1c)\otimes g_2m\\
&=\sum Y(a,z-w)h_1 Y(b,-w)(g_1c)\otimes h_2g_2m\\
&=\sum Y(a,z-w)Y(h_1b,-w)(h_2g_1c)\otimes h_3g_2m.
\end{align*}

On the other hand, we can obtain that
\begin{align*}
&Y_{V\otimes M}(Y^{V\#H}(a\#h, z)(b\#g),-w)(c\otimes m)\\
&=Y_{V\otimes M}(\sum Y(a,z)(h_1b)\# h_2g,-w)(c\otimes m)\\
&=\sum Y(Y(a,z)(h_1b),-w)(h_2g_1c)\otimes h_3g_2m.
\end{align*}

Since $V$ is a vertex algebra and $\Delta(h)=\sum h_1\otimes h_2$ is a finite sum, there is $n \in  \mathbb{Z^+}$ such that 
\begin{align*}
(z-w)^nY(a,z-w)Y(h_1b,-w)=(z-w)^nY(Y(a,z)(h_1b),-w),
\end{align*}

for all $h_1$.
Thus, we obtain that
\begin{align*}
&(z-w)^n\sum Y(a,z-w)Y(h_1b,-w)(h_2g_1c)\otimes h_3g_2m\\
&=(z-w)^n\sum Y(Y(a,z)(h_1b),-w)(h_2g_1c)\otimes h_3g_2m.
\end{align*}

(iv) For any $a \#h, b\#g \in V\#H$, $c \otimes m \in V \otimes M$, we can obtain that 
\begin{align*}
&Y_{V\otimes M}(a\#h, z)Y_{V\otimes M}(b\#g,w)(c\otimes m)\\
&=\sum Y(a,z)Y(h_1b,w)(h_2g_1c)\otimes h_3g_2m.
\end{align*}
and
\begin{align*}
&\sum Y_{V\otimes M}(h_1b\#1, w)Y_{V\otimes M}(a\#h_2g,z)(c\otimes m)\\
&=\sum Y(h_1b,w)Y(a,z)(h_2g_1c)\otimes h_3g_2m.
\end{align*}
Since $V$ is a vertex algebra and $\Delta(h)=\sum h_1\otimes h_2$ is a finite sum, there is $n \in  \mathbb{Z^+}$ such that 
\begin{align*}
(z-w)^nY(a,z)Y(h_1b,w)=(z-w)^nY(h_1b,w)Y(a,z).
\end{align*}
for all $h_1$. Hence, we can obtain that
\begin{align*}
(z-w)^nY_{V\otimes M}(a\#h, z)Y_{V\otimes M}(b\#g,w)=(z-w)^n\sum Y_{V\otimes M}(h_1b\#1, w)Y_{V\otimes M}(a\#h_2g,z).
\end{align*}
Thus, $V\otimes M$ is a $V\# H$-module as $\mathcal{S}$-local vertex algebra.
Conversely, first, we need to check that $M$ is a $V$-module. It is easy to check that $a_nm=0$ for any $a \in V, m \in M$ if $n \gg 0$
and $Y_M^V({\bf 1},z)m=Y_M({\bf 1} \#1,z)m=m.$ For translation invariance, we can get 

\begin{align*}
&s_M^VY_M^V(a,z)m-Y_M^V(a,z)(s_M^V(m))\\
&=s_M^VY_M(a\#1,z)m-Y_M(a\#1,z)(s_M^V(m))\\
&=s_MY_M(a\#1,z)m-Y_M(a\#1,z)(s_M(m))\\
&=\partial_zY_M(a\#1,z)m\\
&=\partial_zY_M^V(a,z)m,
\end{align*}
for any $a \in V, m \in M$. Next, we should prove that $M$ is an $H$-module. For any $h,g \in H, m \in M$, we obtain that
\begin{align*}
&h(gm)-(hg)m\\
=&Res_{z,w}z^{-1}w^{-1}(Y_M({\bf 1} \# h,z)Y_M({\bf 1} \# g,w)-Y_M({\bf 1} \# 1,w)Y_M({\bf 1} \# hg,z))m\\
=&Res_{x,z,w}z^{-1}w^{-1}x^{-1}\delta(\frac{z-w}{x})Y_M({\bf 1} \# h,z)Y_M({\bf 1} \# g,w)m\\
&-Res_{x,z,w}z^{-1}w^{-1}x^{-1}\delta(\frac{-w+z}{x})Y_M({\bf 1} \# 1,w)Y_M({\bf 1} \# hg,z)m\\
=&Res_{x,z,w}z^{-1}w^{-1}z^{-1}\delta(\frac{w+x}{z})Y_M(Y^{V\#H}({\bf 1} \# h,x){\bf 1} \# g,w)m\\
=&Res_{x,z,w}z^{-1}w^{-1}z^{-1}\delta(\frac{w+x}{z})Y_M({\bf 1} \# hg,w)m=0.
\end{align*}
Hence, $h(gm)=(hg)m$.

This completes the proof.
\end{proof}

As in \cite{HW}, Wang investigated the connection of a $V$-module as well as an $H$-module and a $V\#H$-module when $V$ is a $H$-module vertex operator algebra. With the similar method, we can obtain the following two lemmas.
\begin{lemma}\label{lm4.1}
Let $(V, Y, {\bf 1}, s)$ be an $H$-module vertex algebra. If $(M, Y_M, s_M)$ is a $V\#H$-module, then for any $a \in V, h \in H, m \in M$, we have
\begin{enumerate}[(i)]
\item $hY_M(a,z)m=\sum Y_M(h_1a,z)h_2m$,
\item $Y_M(a\#h,z)m=Y_M(a,z)hm$.
\end{enumerate}
Here we use $Y_M$ to denote both $V$-module and $V\#H$-module.
\end{lemma}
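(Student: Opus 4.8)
The plan is to recover the $H$-action and the $V$-module structure from the $V\#H$-module $(M,Y_M)$ via the identifications $hm = \mathrm{Res}_z\, z^{-1} Y_M({\bf 1}\#h,z)m$ and $Y_M^V(a,z)m = Y_M(a\#1,z)m$ as in Proposition \ref{pro3.4}, and then to verify the two compatibility identities by systematically converting vertex-algebra operator identities into information about individual modes. First I would establish (i). Starting from the definition of the smash product, the element ${\bf 1}\#h$ acts on $a\#1$ inside $V\#H$ by $Y^{V\#H}({\bf 1}\#h,z)(a\#1)=\sum Y({\bf 1},z)(h_1 a)\#h_2 = \sum (h_1 a)\#h_2$, a series with no negative powers of $z$ (indeed a polynomial of degree $0$ in $z$ since $Y({\bf 1},z)=\mathrm{id}$). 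I would then feed this into the $\mathcal S$-locality / Borcherds-type identity for the $V\#H$-module $M$ applied to the pair $({\bf 1}\#h,\ a\#1)$, multiply by $z^{-1}$ (the appropriate residue-extracting factor matching the definition of the $H$-action) and take $\mathrm{Res}$'s in the auxiliary and the $z$-variables; because ${\bf 1}\#h$ produces only the single term $\sum(h_1a)\#h_2$ under $Y^{V\#H}(\cdot,x)$, the "iterate" side collapses and one is left with exactly $h\bigl(Y_M^V(a,w)m\bigr) = \sum Y_M^V(h_1 a,w)(h_2 m)$. This is the analogue of the computation at the end of the proof of Proposition \ref{pro3.4} that showed $h(gm)=(hg)m$, run with $a\#1$ in place of ${\bf 1}\#g$.

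For (ii) the cleanest route is to note that $a\#h = (a\#1)\cdot({\bf 1}\#h)$ in the associative-algebra sense internal to $V\#H$, more precisely that $Y^{V\#H}({\bf 1}\#h,z)$ applied to the vacuum ${\bf 1}\#1$ yields ${\bf 1}\#h$ plus higher-order terms, so that $a\#h$ can be obtained as a suitable mode of $Y^{V\#H}(a\#1,\cdot)$ acting on ${\bf 1}\#h$: indeed $Y^{V\#H}(a\#1,z)({\bf 1}\#h)=\sum Y(a,z){\bf 1}\#h = (a\#h) + \cdots$, whence $a\#h = (a\#1)_{-1}({\bf 1}\#h)$. Applying the module $n$-th product formula for $n=-1$ (the associativity/iterate identity for $M$) to the operator $Y_M(a\#1,z)$ and the element $({\bf 1}\#h)$, together with $Y_M({\bf 1}\#h,z)$ acting as a power series in $z$ whose degree-$(-1)$ coefficient is $h\cdot$, gives $Y_M(a\#h,w)m = Y_M^V(a,w)(hm) = Y_M(a,w)hm$. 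Alternatively, and perhaps more transparently, one substitutes $b\#g = {\bf 1}\#h$ and $c=m$ directly into the associativity axiom relating $Y_M(Y^{V\#H}(a\#1,z)({\bf 1}\#h),-w)m$ to $i_{z,w}Y_M(a\#1,z)Y_M({\bf 1}\#h,-w)m$ and extracts the constant term in $z$ on both sides.

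The main obstacle I anticipate is bookkeeping rather than conceptual: one must be careful that the residue/constant-term extraction used to \emph{define} the $H$-action on $M$ (via $z^{-1}Y_M({\bf 1}\#h,z)$) is exactly the one that makes the iterate side of the Borcherds identity collapse to a single clean term, and that the finiteness of $\Delta(h)=\sum h_1\otimes h_2$ plus the vacuum axiom $Y({\bf 1},z)=\mathrm{id}$ are invoked at the right places so that no spurious $z$-expansions ($i_{z,w}$ versus $i_{w,z}$) survive. Once the vacuum element ${\bf 1}\#h$ is recognized as producing a \emph{holomorphic} (in fact constant-in-$z$) field under $Y^{V\#H}$, both identities reduce to the kind of single-mode manipulation already carried out in the proof of Proposition \ref{pro3.4}, and the verification is routine; I would present it in that style, citing Remark \ref{rm2.5}(1) for the Borcherds-type identity (\ref{eq2.5}) that legitimizes the residue computations in the $\mathcal S$-local setting.
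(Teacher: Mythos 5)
Your plan for part (i) is essentially the paper's argument: write $hY_M(a,w)m-\sum Y_M(h_1a,w)h_2m$ as $\mathrm{Res}_z\,z^{-1}$ of the difference of the two products, apply the Borcherds-type identity (\ref{eq2.5}) (in its module form) to the pair $({\bf 1}\#h,\,a\#1)$, whose $\mathcal S$-swap is $\sum(h_1a\#1,\,{\bf 1}\#h_2)$, and use the special form of the iterate. One clarification: the iterate term does not ``collapse to'' one of the two sides — it vanishes outright, because $Y^{V\#H}({\bf 1}\#h,x)(a\#1)=\sum h_1a\#h_2$ is constant in $x$ while $z^{-1}\delta\bigl(\frac{w+x}{z}\bigr)$ contains only nonnegative powers of $x$, so $\mathrm{Res}_x$ of the right-hand side of (\ref{eq2.5}) is zero; that vanishing is the whole content of the computation.

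For part (ii), both concrete routes you propose have genuine gaps. The $n=-1$ product route is not available: $V\#H$ is only $\mathcal S$-local, and the naive $n$-th product/normal-ordered formula $Y_M\bigl((a\#1)_{-1}({\bf 1}\#h),w\bigr)=\,:Y_M(a\#1,w)Y_M({\bf 1}\#h,w):$ is precisely what fails in the nonlocal setting — the correct analogue, (\ref{eq2.14}), carries the $\mathcal S$-swapped second term. Moreover, even granting it, you would need $Y_M({\bf 1}\#h,w)$ to be the constant operator $h$, which is the $a={\bf 1}$ case of (ii) itself (circular), and the normal ordering would then give $Y_M(a,w)_+h+hY_M(a,w)_-$, which by (i) differs from $Y_M(a,w)h$ since $h$ does not commute with the modes of $Y_M(a,w)$. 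Your ``more transparent'' alternative also fails as stated: the module associativity axiom holds only after multiplication by $(z-w)^N$, so you cannot extract the constant term in $z$ on both sides of the unmultiplied relation; and in that relation ${\bf 1}\#h$ occupies the inner slot with the same variable $-w$ as the outer operator, so the residue $\mathrm{Res}\,z^{-1}$ defining $hm$ cannot be taken in its variable at all. The paper's proof — which your closing paragraph gestures toward but does not carry out — applies (\ref{eq2.5}) to the pair $(v\#1,\,{\bf 1}\#h)$, whose $\mathcal S$-swap is $({\bf 1}\#1,\,v\#h)$: this is how $Y_M(v\#h,z)$ enters, ${\bf 1}\#h$ keeps its own variable $w$ (so $\mathrm{Res}_w\,w^{-1}$ produces $hm$), and the iterate term again dies under $\mathrm{Res}_x$ because $Y^{V\#H}(v\#1,x)({\bf 1}\#h)=Y(v,x){\bf 1}\#h$ has only nonnegative powers of $x$. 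So (i) matches the paper, but (ii) must be rerun through (\ref{eq2.5}) with this swapped pair rather than through the $-1$-st product or bare associativity.
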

\begin{proof}
(i) For any $a \in V, h \in H, m \in M$, we have
\begin{align*}
&hY_M(a,w)m-\sum Y_M(h_1a,w)h_2m\\
=&Res_{z}z^{-1}(Y_M({\bf 1} \#h,z)Y_M(a\#1,w)-\sum Y_M(h_1a\#1,w)Y_M({\bf 1}\#h_2,z))m\\
=&Res_{x,z}z^{-1}x^{-1}\delta(\frac{z-w}{x})Y_M({\bf 1} \#h,z)Y_M(a\#1,w)m\\
&-Res_{x,z}z^{-1}x^{-1}\delta(\frac{-w+z}{x})\sum Y_M(h_1a\#1,w)Y_M({\bf 1}\#h_2,z)m\\
=&Res_{x,z}z^{-1}z^{-1}\delta(\frac{w+x}{z})Y_M(Y^{V\#H}({\bf 1} \#h,x)a\#1,w)m\\
=&Res_{x,z}z^{-1}z^{-1}\delta(\frac{w+x}{z})Y_M(\sum h_1a\#h_2,w)m=0.
\end{align*}
(ii) For any $v \in V, h \in H, m \in M$, we have
\begin{align*}
&Y_M(v,z)hm-Y_M(v\#h,z)m\\
=&Res_{w}w^{-1}(Y_M(v\#1,z)Y_M({\bf 1} \#h,w)-Y_M({\bf 1}\#1,w)Y_M(v\#h,z))m\\
=&Res_{x,w}w^{-1}x^{-1}\delta(\frac{z-w}{x})Y_M(v\#1,z)Y_M({\bf 1} \#h,x)m\\
&-Res_{x,w}w^{-1}x^{-1}\delta(\frac{-w+z}{x})Y_M({\bf 1}\#1,w)Y_M(v\#h,z)m\\
=&Res_{x,w}w^{-1}z^{-1}\delta(\frac{w+x}{z})Y_M(Y^{V\#H}(v \#1,x){\bf 1}\#h,w)m\\
=&Res_{x,w}w^{-1}z^{-1}\delta(\frac{w+x}{z})Y_M(Y(v,x){\bf 1}\#h,w)m=0.
\end{align*}
This completes the proof.
\end{proof}

\begin{lemma}\label{lm4.2}
Let $(V, Y, {\bf 1}, s)$ be an $H$-module vertex algebra and $(M, Y_M, s_M)$ be a $V$-module as well as an $H$-module such that 
\begin{align*}
h(Y_M(a,z)m)=\sum Y_M(h_1a,z)h_2m, \quad hs_M(m)=s_M(hm), 
\end{align*}
for any $h\in H$, $a\in V$, $m \in M$, where $\Delta(h)=\sum h_1\otimes h_2$. Then $(M, \mathcal{Y},s_M)$ is a $V\#H$-module, where $\mathcal{Y}$ is defined as follows:
\begin{align*}
\mathcal{Y}(a\#h,z)m=Y_M(a,z)hm
\end{align*}
 for any $h\in H$, $a\in V$, $m \in M$.
\end{lemma}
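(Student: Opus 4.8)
The plan is to verify directly that $(M,\mathcal{Y},s_M)$ satisfies every axiom of a $V\#H$-module as an $\mathcal{S}$-local vertex algebra (Definition \ref{def2.9}), using the given compatibility conditions on the $V$-module and $H$-module structures on $M$. First I would check the elementary axioms: truncation ($\mathcal{Y}(a\#h,z)m=Y_M(a,z)(hm)\in M((z))$ since $M$ is a $V$-module), the vacuum axiom ($\mathcal{Y}({\bf 1}\#1,z)m=Y_M({\bf 1},z)m=m$), and translation invariance, where $s_M\mathcal{Y}(a\#h,z)m-\mathcal{Y}(a\#h,z)s_M(m)=s_MY_M(a,z)(hm)-Y_M(a,z)(hs_M(m))=s_MY_M(a,z)(hm)-Y_M(a,z)s_M(hm)=\partial_zY_M(a,z)(hm)=\partial_z\mathcal{Y}(a\#h,z)m$, the third equality using $hs_M(m)=s_M(hm)$ and the fourth using translation invariance of the $V$-module $M$.

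Next I would verify the associativity axiom. For $a\#h,b\#g\in V\#H$ and $m\in M$, I compute both
$\mathcal{Y}(a\#h,z-w)\mathcal{Y}(b\#g,-w)m=Y_M(a,z-w)\bigl(h(Y_M(b,-w)(gm))\bigr)=\sum Y_M(a,z-w)Y_M(h_1b,-w)(h_2gm)$, using the hypothesis $h(Y_M(b,z)v)=\sum Y_M(h_1b,z)(h_2v)$, and on the other side $\mathcal{Y}(Y^{V\#H}(a\#h,z)(b\#g),-w)m=\sum\mathcal{Y}(Y(a,z)(h_1b)\#h_2g,-w)m=\sum Y_M(Y(a,z)(h_1b),-w)(h_2gm)$. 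Since $\Delta(h)=\sum h_1\otimes h_2$ is a finite sum and $M$ is a $V$-module, there is $N\in\mathbb{Z^+}$ with $(z-w)^NY_M(a,z-w)Y_M(h_1b,-w)=(z-w)^Ni_{z,w}Y_M(Y(a,z)(h_1b),-w)$ for all the finitely many $h_1$ occurring, and equating gives the associativity relation. The $\mathcal{S}$-locality axiom is handled the same way: $\mathcal{Y}(a\#h,z)\mathcal{Y}(b\#g,w)m=\sum Y_M(a,z)Y_M(h_1b,w)(h_2gm)$ while $\sum\mathcal{Y}(h_1b\#1,w)\mathcal{Y}(a\#h_2g,z)m=\sum Y_M(h_1b,w)Y_M(a,z)(h_2gm)$, and locality of the $V$-module $M$ supplies an $n$ with $(z-w)^nY_M(a,z)Y_M(h_1b,w)=(z-w)^nY_M(h_1b,w)Y_M(a,z)$ simultaneously for all $h_1$, whence the $\mathcal{S}$-locality identity with $a^i=a$, $b^i$ running over the $h_1b$, and multiplicities $h_2g$.

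The only genuinely delicate point is making sure the required integer ($N$ for associativity, $n$ for $\mathcal{S}$-locality) can be chosen \emph{uniformly} over all the terms $h_1b$ appearing in $\Delta(h)b$; this is routine because $\Delta(h)$ is a finite sum, so one takes the maximum of finitely many integers, exactly as in the proofs of Theorem \ref{tm3.2} and Proposition \ref{pro3.4}. I expect no serious obstacle beyond bookkeeping with the Sweedler notation; the argument is essentially the converse computation to Proposition \ref{pro3.4} and parallels Lemma \ref{lm2.16}.
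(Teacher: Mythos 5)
Your proposal is correct and follows essentially the same route as the paper: the $\mathcal{S}$-locality computation (expanding $\mathcal{Y}(a\#h,z)\mathcal{Y}(b\#g,w)m$ and $\sum\mathcal{Y}(h_1b\#1,w)\mathcal{Y}(a\#h_2g,z)m$ via $h(Y_M(b,z)v)=\sum Y_M(h_1b,z)h_2v$ and taking a common $n$ over the finitely many $h_1$) is exactly the paper's argument. The only difference is that you verify the vacuum, translation, and associativity axioms by hand, whereas the paper simply delegates them to Lemma \ref{lm2.16}; your direct checks are consistent with that lemma, so nothing is gained or lost.
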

\begin{proof}
We just prove the $\mathcal{S}$-locality identity, and other axioms follow from Lemma \ref{lm2.16}. For any $h,g\in H$, $a,b\in V$, $m \in M$, we have
\begin{align*}
\mathcal{Y}(a\#h,z)\mathcal{Y}(b\#g,w)m=\sum Y_M(a,z)Y_M(h_1b,w)h_2gm,\\
\sum \mathcal{Y}(h_1b\#1,w)\mathcal{Y}(a\#h_2g,z)m=\sum Y_M(h_1b,w)Y_M(a,z)h_2gm.
\end{align*}
Since $M$ is a $V$-module and $\Delta(h)=\sum h_1\otimes h_2$ is a finite sum, there is $n \in  \mathbb{Z^+}$ such that 
\begin{align*}
(z-w)^nY_M(a,z)Y_M(h_1b,w)=(z-w)^nY_M(h_1b,w)Y_M(a,z).
\end{align*}
for all $h_1$. Thus, we obtain that 
\begin{align*}
(z-w)^n\mathcal{Y}(a\#h,z)\mathcal{Y}(b\#g,w)m=(z-w)^n\sum \mathcal{Y}(h_1b\#1,w)\mathcal{Y}(a\#h_2g,z)m.
\end{align*}
The $\mathcal{S}$-locality identity holds.

This completes the proof.
 \end{proof}

 By Theorem \ref{tm2.6} and \ref{tm3.2}, there is an associative algebra $A(V\#H)$. On the other hand, it is easy to check that $H$ preserves $V_{(g)}V$ and $A(V)$ is an $H$-module. Thus, $A(V)$ is an $H$-module algebra. So $A(V)\#H$ is also an associative algebra. Then, we obtain the following result.

\begin{theorem}\label{tm3.5}
$A(V\#H)\cong A(V)\#H$ as associative algebras.
\end{theorem}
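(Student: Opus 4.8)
The plan is to construct mutually inverse algebra homomorphisms between $A(V\#H)$ and $A(V)\#H$ directly from the natural linear identification $V\#H = V\otimes H = A(V)\otimes H$ at the level of representatives, and then check that the $f$-product descends correctly. First I would set up the linear map $\Phi\colon V\#H \to A(V)\#H$ by $a\#h \mapsto [a]\#h$, where $[a]$ denotes the image of $a$ in $A(V) = V/V_{(g)}V$. The key computation is to show $\Phi$ sends $(V\#H)_{(g)}(V\#H)$ into the relations defining $A(V)\#H$, so that it descends to a well-defined linear map $\bar\Phi\colon A(V\#H) \to A(V)\#H$; and conversely that the obvious map backwards is well-defined. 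For this I would unwind
\begin{align*}
(a\#h)_{(g)}(b\#k) = \operatorname{Res}_z g(z) Y^{V\#H}(a\#h,z)(b\#k) = \sum \big(a_{(g_h)}(h_1 b)\big)\# h_2 k,
\end{align*}
using (\ref{eq3.1}), where $a_{(g_h)}(h_1b) = \operatorname{Res}_z g(z) Y(a,z)(h_1 b)$ is precisely the $f$-product computation inside $V$; modulo $V_{(g)}V$ this becomes $[a]\cdot[h_1 b]\# h_2 k$ in $A(V)\#H$. So the subspace $(V\#H)_{(g)}(V\#H)$ maps onto $(A(V)_{(g)}A(V))\#H$, which is exactly the kernel of the quotient $A(V)\otimes H \to A(V)\#H$... wait — one must be slightly careful here: $A(V)\#H$ as an associative algebra is $A(V)\otimes H$ with a \emph{twisted} product, not a quotient, so the right statement is that $\Phi$ induces a linear isomorphism $A(V\#H) = (V\otimes H)/(V\#H)_{(g)}(V\#H) \xrightarrow{\ \sim\ } (V/V_{(g)}V)\otimes H = A(V)\otimes H = A(V)\#H$ of vector spaces, and the content is that it respects multiplication.

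Concretely the steps in order are: (1) observe that $H$ acts on $A(V)$ and preserves $V_{(g)}V$ (stated just before the theorem), so $A(V)$ is an $H$-module algebra and $A(V)\#H$ is a bona fide associative algebra with product $([a]\#h)([b]\#k) = \sum [a]\cdot[h_1 b]\# h_2 k$; (2) show $V_{(g)}V \otimes H \subseteq (V\#H)_{(g)}(V\#H) \subseteq V_{(g)}V\otimes H$, hence equality, using the displayed formula for $(a\#h)_{(g)}(b\#k)$ together with the fact that $h$ can be any element and $\varepsilon$ can be applied to split off an $H$-factor (take $h=1$ to get one inclusion, and note the general $f$-product lands in $(V_{(g)}V)\otimes H$ for the other); this gives the linear iso $\bar\Phi$; (3) verify $\bar\Phi$ is an algebra map by comparing $\bar\Phi\big((a\#h)_{(f)}(b\#k)\big)$ with $\bar\Phi(a\#h)\cdot\bar\Phi(b\#k)$ — both equal $\sum [a]\cdot[h_1 b]\# h_2 k$ — using that the $f$-product in $A(V)$ is by definition induced by $a_{(f)}b = \operatorname{Res}_z f(z)Y(a,z)b$ and that by Theorem \ref{tm2.6} it is a well-defined associative operation; (4) note that the unit $[{\bf 1}\#1]$ maps to $[{\bf 1}]\#1$, the unit of $A(V)\#H$. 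That $\bar\Phi$ is bijective is immediate from step (2) since it is the map induced on quotients by the identity on $V\otimes H$.

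The main obstacle I anticipate is step (2): showing that the relations match up exactly, i.e. that passing to the $f$-product quotient commutes with forming the smash product. The subtlety is that $(V\#H)_{(g)}(V\#H)$ is spanned by elements $\sum (a_{(g)}(h_1 b))\# h_2 k$ where the $V$-part is not simply an element of $V_{(g)}V$ but a sum $\sum_j c_j \# d_j$ with $c_j$ of the form $\operatorname{Res}_z g(z) Y(a,z)(h_1 b)$ — one needs that each such $c_j \in V_{(g)}V$ (clear, it is literally a $g$-product) and, conversely, that every element $a_{(g)}b$ of $V_{(g)}V$ is realized this way inside $V\#H$, which follows by taking $h=1=k$: $(a\#1)_{(g)}(b\#1) = (a_{(g)}b)\#1$. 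So the inclusion both ways is genuinely straightforward once the formula for $Y^{V\#H}$ is substituted; the only real care needed is bookkeeping with the coproduct $\Delta(h)=\sum h_1\otimes h_2$ and Sweedler notation, and making sure the identification of $A(V)\#H$'s underlying space with $A(V)\otimes H$ is used consistently. I would also remark that associativity of the $f$-product on $A(V\#H)$ is guaranteed by Theorem \ref{tm2.6} applied to the $\mathcal{S}$-local vertex algebra $V\#H$ from Theorem \ref{tm3.2}, so no independent associativity check is required beyond transporting it through $\bar\Phi$.
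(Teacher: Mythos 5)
Your proposal is correct and is essentially the paper's own proof: both define the mutually inverse maps $[a]\#h \leftrightarrow [a\#h]$ on representatives and verify multiplicativity by the same computation $(a\#h)_{(f)}(b\#k)=\sum \big(a_{(f)}(h_1b)\big)\#h_2k$ coming from the formula for $Y^{V\#H}$. Your step (2), identifying $(V\#H)_{(g)}(V\#H)$ with $V_{(g)}V\otimes H$, just makes explicit the well-definedness that the paper asserts without spelling out; only replace the parenthetical ``$h=1=k$'' by $(a\#1)_{(g)}(b\#k)=(a_{(g)}b)\#k$ so as to realize all of $V_{(g)}V\otimes H$, not merely $V_{(g)}V\#1$.
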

\begin{proof}
Define
\begin{align*}
\varphi : A(V)\#H \to A(V\#H),\quad [v]\#h \mapsto [v\#h],\\
\phi : A(V\#H) \to A(V)\#H ,\quad [v\#h]\mapsto [v]\#h.
\end{align*}
It is obvious that $\varphi \circ \phi=Id$, $\phi \circ \varphi=Id$.

Now, we only need to check that  $\varphi, \phi$ are well-defined algebra homomorphisms. Let $u,v \in V $, $h,g \in H$, we can obtain that 
\begin{align*}
&\varphi([u]\#h)_{(f)}\varphi([v]\#g)\\
&=[u\#h]_{(f)}[v\#g]\\
&=[Res_zf(z)Y^{V\#H}(u\#h,z)v\#g]\\
&=[Res_zf(z)\sum Y(u,z)(h_1v)\#h_2g]\\
&=\sum [u_{(f)}(h_1v)\#h_2g].
\end{align*}
On the other hand, we obtain that
\begin{align*}
&\varphi([u]\#h_{(f)}[v]\#g)\\
&=\varphi(Res_zf(z)Y^{A(V)\#H}([u]\#h,z)[v]\#g)\\
&=\varphi(Res_zf(z)\sum Y^{A(V)}([u],z)[h_1v]\#h_2g)\\
&=[Res_zf(z)\sum Y(u,z)(h_1v)\#h_2g]\\
&=\sum [u_{(f)}(h_1v)\#h_2g].
\end{align*}
Hence, $\varphi([u]\#h)_{(f)}\varphi([v]\#g)=\varphi([u]\#h_{(f)}[v]\#g)$. 

Similarly, we can prove that $\phi([u\#h])_{(f)}\phi([v\#g])=\phi([u\#h]_{(f)}[v\#g])$.

This completes the proof.
\end{proof}

One can deduce the following result by Example \ref{ex2.8}, Theorem \ref{tm2.6}, Corollary \ref{cor4.2} and Theorem \ref{tm3.5} immediately.

\begin{corollary}\label{cor4.7}
For each $n \in \mathbb{Z^+}$, we have:
\begin{enumerate}[(i)]
\item $A(V\#H\#H^*)\cong A(V)\#H\#H^*$;
\item $A(M(n, V))\cong A(V)\otimes M(n, {\bf k})\cong M(n, A(V))$.
\end{enumerate}
 \end{corollary}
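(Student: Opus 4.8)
The plan is to treat (ii) by a direct entrywise computation and (i) by iterating Theorem \ref{tm3.5}, in each case keeping track of how the subspace $V_{(g)}V$ defining the $f$-product quotient $A(V)=V/V_{(g)}V$ behaves under the construction at hand.

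For (ii), I would start from Example \ref{ex2.8}, which says $M(n,V)=V\otimes M(n,{\bf k})$ is an $\mathcal{S}$-local vertex algebra, so by Theorem \ref{tm2.6} the quotient $A(M(n,V))$ is an associative algebra under the $f$-product. Writing matrices as $A=(v_{ij})=\sum_{i,j}v_{ij}E_{ij}$ and $B=(u_{jk})=\sum_{j,k}u_{jk}E_{jk}$ with $E_{ij}$ the matrix units, formula (\ref{eq5.3}) shows that the $(i,k)$-entry of $A_{(f)}B=Res_z f(z)Y^{M(n,V)}(A,z)B$ is $\sum_j Res_z f(z)Y(v_{ij},z)u_{jk}=\sum_j (v_{ij})_{(f)}u_{jk}$; that is, the $f$-product on $M(n,V)$ is matrix multiplication with the $f$-product of $V$ in the entries, and the same holds with $g=\partial f$ in place of $f$. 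Evaluating on the single-entry matrices $A=aE_{ij}$, $B=bE_{jk}$ gives $A_{(g)}B=(a_{(g)}b)E_{ik}$, so letting $i,j,k$ vary yields $M(n,V)_{(g)}M(n,V)=(V_{(g)}V)\otimes M(n,{\bf k})$. Hence
\begin{align*}
A(M(n,V))&=\bigl(V\otimes M(n,{\bf k})\bigr)\big/\bigl((V_{(g)}V)\otimes M(n,{\bf k})\bigr)\\
&\cong\bigl(V/V_{(g)}V\bigr)\otimes M(n,{\bf k})=A(V)\otimes M(n,{\bf k})
\end{align*}
via $[A]\mapsto([v_{ij}])$, and since the induced product is matrix multiplication over $A(V)$, the last space is $M(n,A(V))$; this is (ii).

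For (i), I would use that $H={\bf k}G$ and, by Lemma \ref{lm5.1}, that $V\#H$ is an $H^*$-module $\mathcal{S}$-local vertex algebra; moreover the $V\#H\#H^*$ of Corollary \ref{cor4.2} is the smash product $(V\#H)\#H^*$ (one checks directly that the formula (\ref{eq5.2}) coincides with (\ref{eq3.1}) applied to the $H^*$-action of Lemma \ref{lm5.1}). As in the paragraph preceding Theorem \ref{tm3.5}, where it is noted that $H$ preserves $V_{(g)}V$, the identity of Lemma \ref{lm5.1}(ii) shows $H^*$ preserves $(V\#H)_{(g)}(V\#H)$, so $A(V\#H)$ is an $H^*$-module algebra; likewise $A(V)\#H$ is $G$-graded and hence an $H^*$-module algebra, so $A(V)\#H\#H^*$ is defined. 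The key point is that the proof of Theorem \ref{tm3.5} uses only the field-algebra structure of the smash product, the formula for $Y^{V\#H}$, the definition (\ref{eq2.7}) of the $f$-product, and the quotient of Theorem \ref{tm2.6}; it never uses locality. So Theorem \ref{tm3.5} holds verbatim with $V$ replaced by any $H$-module $\mathcal{S}$-local vertex algebra. Applying this generalization to the $H^*$-module $\mathcal{S}$-local vertex algebra $V\#H$ gives $A\bigl((V\#H)\#H^*\bigr)\cong A(V\#H)\#H^*$, and applying the original Theorem \ref{tm3.5} gives $A(V\#H)\cong A(V)\#H$. Combining, $A(V\#H\#H^*)\cong(A(V)\#H)\#H^*=A(V)\#H\#H^*$.

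The hard part will not be any single computation but two structural checks: that $M(n,V)_{(g)}M(n,V)$ is exactly $(V_{(g)}V)\otimes M(n,{\bf k})$ rather than merely contained in it, which the single-entry matrices above settle, and that the proof of Theorem \ref{tm3.5} genuinely survives the replacements ``vertex algebra'' $\leadsto$ ``$\mathcal{S}$-local vertex algebra'' and $H\leadsto H^*$, which I would verify by rereading that proof and confirming that only the structures listed above enter. Everything else is bookkeeping.
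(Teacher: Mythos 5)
Your proposal is correct and follows the same route the paper intends: the paper gives no written argument for this corollary, asserting only that it follows immediately from Example \ref{ex2.8}, Theorem \ref{tm2.6}, Corollary \ref{cor4.2} and Theorem \ref{tm3.5}, which is precisely the chain you execute (the entrywise identification of the $f$-product on $M(n,V)$ for (ii), and the iteration of Theorem \ref{tm3.5} through Lemma \ref{lm5.1} and Corollary \ref{cor4.2} for (i)). Your explicit check that the proof of Theorem \ref{tm3.5} only uses the field-algebra structure, (\ref{eq3.1}), (\ref{eq2.7}) and Theorem \ref{tm2.6}, and hence applies to the $\mathcal{S}$-local vertex algebra $V\#H$ with its $H^*$-action, is the one step the paper leaves entirely implicit, and it is worth retaining.
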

 

\section{Quantization of $\mathcal{S}$-local vertex algebra}
Throughout the rest of the paper we shall work over the algebra ${\bf k}[[h]]$ of formal power series in the variable $h$, and all the algebraic structures that we will consider are modules over ${\bf k}[[h]]$. 

First, we recall some basic results of \emph{topologically free} ${\bf k}[[h]]$-module for later use.
\begin{definition}\label{def6.1}(Ref. \cite{DGK})
\begin{em}
A \emph{topologically free} ${\bf k}[[h]]$-module is isomorphic to $W[[h]]$ for some vector space $W$ over ${\bf k}$.
\end{em}		
\end{definition}
\begin{remark}\label{rm6.2}(Ref. \cite{DGK})
Note that $W[[h]]\ncong W \otimes {\bf k}[[h]]$, unless $W$ is finite-dimensional vector space over ${\bf k}$, and that the tensor product $U[[h]] \otimes_{{\bf k}[[h]]} W[[h]]$ of topologically free ${\bf k}[[h]]$-modules is not topologically free, unless one of $U$ and $W$ is finite dimensional. One defines, for any vector spaces $U$ and $W$, the \emph{completed} tensor product by 
\begin{align}\label{eq6.1}
U[[h]] \widehat{\otimes}_{{\bf k}[[h]]} W[[h]] :=(U \otimes W)[[h]].
\end{align}
This is a completion in $h$-adic topology of $U[[h]] \otimes_{{\bf k}[[h]]} W[[h]]$ (see \cite{Kas}).
\end{remark}

Let $V$ be a topologically free ${\bf k}[[h]]$-module. An \emph{$End_{{\bf k}[[h]]}V$-valued quantum field} is an $End_{{\bf k}[[h]]}V$-valued formal distribution $Y(a,z)$ such that $Y(a,z)b \in V_h((z))$ for any $b \in V$, where 
\begin{align}\label{eq6.2}
V_h((z))=\{Y(a,z) \in V[[z,z^{-1}]] | Y(a,z) \in V[[z]][z^{-1}]\  mod\  h^M for\  every\  M \in \mathbb{Z}_{\ge 0}\}.
\end{align}

Now, we propose the definition of braided vertex algebra in \cite{DGK}.
\begin{definition}\label{def6.4}(Ref. \cite{DGK})
\begin{em}
Let $V$ be a topologically free ${\bf k}[[h]]$-module, with a given non-zero vector ${\bf 1} \in V$ (vacuum vector), and a ${\bf k}[[h]]$-linear map $s: V \to V$ such that $s({\bf 1})=0$ (translation operator).
\begin{enumerate}[(a)]
\item A \emph{topological state-field} correspondence on $V$ is a ${\bf k}[[h]]$-linear map
\begin{align}\label{eq6.3}
Y: V \widehat{\otimes} V \to V_h((z)),
\end{align}
satisfying the axioms of a state-field correspondence as in Definition \ref{def2.31}.

\item A \emph{braiding} on $V$ is a ${\bf k}[[h]]$-linear map
\begin{align}\label{eq6.4}
\mathcal{Q}: V \widehat{\otimes} V \to V \widehat{\otimes} V  \widehat{\otimes}  ({\bf k}((z))[[h]]),
\end{align}
sucth that $\mathcal{Q}=1+O(h)$.
\end{enumerate}
\end{em}		
\end{definition}

\begin{definition}\label{def6.5}(Ref. \cite{DGK})
\begin{em}
A \emph{braided vertex algebra} is a quintuple $(V, {\bf 1}, s, Y, \mathcal{Q})$ as in Definition \ref{def6.4}, satisfying the following $\mathcal{Q}$-locality: for any $a,b \in V$ and $M \in \mathbb{Z}_{\ge 0} $, there exists $n=n(a,b,M) \in  \mathbb{Z^+}$ such that 
\begin{eqnarray}\label{eq6.5}
(z-w)^nY(a,z)Y(b,w)c \cdot \mathcal{Q}(z-w)(a \otimes b)=(z-w)^nY(b,w)Y(a,z)c\ mod \ h^M
\end{eqnarray}
for all $c \in V$.
\end{em}		
\end{definition}

Next, we give the definition of quantum vertex algebra in \cite{DGK} which is similar to the definition of vertex algebra.
\begin{definition}\label{def6.6}(Ref. \cite{DGK})
\begin{em}
A \emph{quantum vertex algebra}  is a braided vertex algebra satisfying the \emph{weak associativity relation}: for any $a,b,c\in V$ and $M \in \mathbb{Z}_{\ge 0} $, there exists $n \in  \mathbb{Z}_{\ge 0}$ such that 
\begin{eqnarray}\label{eq6.6}
(z-w)^nY(a,z-w)Y(b,-w)c=(z-w)^nY(Y(a,z)b,-w)c  \ mod \ h^M.
\end{eqnarray}
\end{em}		
\end{definition}

Finally, we extend the $\mathcal{S}$-locality to the quantum vertex algebras. We can obtain the definition of $\mathcal{S}$-local quantum vertex algebra.
\begin{definition}\label{def6.7}
\begin{em}
An \emph{$\mathcal{S}$-local quantum vertex algebra} is a quintuple $(V, {\bf 1}, s, Y, \mathcal{Q})$ as in Definition \ref{def6.4}, satisfying the \emph{weak associativity relation (\ref{eq6.6})} and the following $\mathcal{QS}$-locality: for any $a,b \in V$ and $M \in \mathbb{Z}_{\ge 0} $, there exists $n=n(a,b,M), m \in  \mathbb{Z^+}$ and $a^i,b^i \in V, i=1,2,...,m$ such that 
\begin{align}\label{eq6.7}
(z-w)^nY(a,z)Y(b,w)c \cdot \mathcal{Q}(z-w)(a \otimes b)=(z-w)^n\sum_{i=1}^mY(b^i ,w)Y(a^i,z)c\ mod \ h^M
\end{align}
for all $c \in V$.\end{em}		
\end{definition}

Let $(V, Y,{\bf 1}, s, \mathcal{Q})$ be a quantum vertex algebra and $H$ a Hopf algebra. Then $V$ is called \emph{$H$-module quantum vertex algebra} if satisfies all axioms as in Definition \ref{def3.1}. Similar to Theorem \ref{tm3.2}, we can obtain the following result.
\begin{theorem}\label{tm6.9}
Let $(V, Y, {\bf 1}), s, \mathcal{Q})$ be an $H$-module quantum vertex algebra. Define $Y^{V\#H}(\cdot,z)$ as follows:
\begin{align}\label{eq6.8}
Y^{V\#H}(u\#a,z)(v\#b)=\sum Y(u,z)(a_1v)\#a_2b,
\end{align}
where $\Delta(a)=\sum a_1\otimes a_2$. 

Then, the smash product $(V\#H, Y^{V\#H}, {\bf 1} \#1, s\#1, \mathcal{Q}^{V\#H})$ is an $\mathcal{S}$-local quantum vertex algebra if $\mathcal{Q}^{V\#H}$ meets the following conditions: for any $(u\#a)\otimes (v\#b) \in (V\#H) \widehat{\otimes} (V\#H)$, 
\begin{align}\label{eq6.9}
&\mathcal{Q}^{V\#H}((u\#a)\otimes (v\#b))=(u\#a)\otimes (v\#b)\otimes F_h(u\#a,v\#b),\\
&\mathcal{Q}(u\otimes a_1v)=u\otimes a_1v\otimes G_h(u,a_1v),
\end{align}
where $F_h(u\#a,v\#b), G_h(u,a_1v)\in {\bf k}((z))[[h]]$, then $F_h(u\#a,v\#b)=G_h(u,a_1v)$ for all $a_1$, where $\Delta(a)=\sum a_1\otimes a_2$.
\end{theorem}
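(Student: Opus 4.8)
The plan is to mimic the proof of Theorem \ref{tm3.2}, adding the extra bookkeeping needed to carry the braiding $\mathcal{Q}^{V\#H}$ through the computation. First I would verify that $(V\#H, Y^{V\#H}, {\bf 1}\#1, s\#1)$ satisfies the weak associativity relation (\ref{eq6.6}) modulo $h^M$. This is essentially the computation already carried out in Lemma \ref{lm2.15} and Proposition \ref{pro3.4}(iii): for $u\#a, v\#b, w\#c \in V\#H$ one expands both $Y^{V\#H}(u\#a,z-w)Y^{V\#H}(v\#b,-w)(w\#c)$ and $Y^{V\#H}(Y^{V\#H}(u\#a,z)(v\#b),-w)(w\#c)$ and reduces everything to weak associativity for $Y$ in $V$ applied to the finitely many terms indexed by $\Delta(a)=\sum a_1\otimes a_2$; since $\Delta(a)$ is a finite sum and each term needs only a uniform $n$ that works mod $h^M$, taking the maximum of these finitely many $n$'s gives the required exponent. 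I must also check that $Y^{V\#H}$ is a topological state-field correspondence, i.e. that $Y^{V\#H}(u\#a,z)(v\#b) \in (V\#H)_h((z))$, which is immediate from $Y(u,z)(a_1v)\in V_h((z))$, and that $\mathcal{Q}^{V\#H}$ as defined in (\ref{eq6.9}) is a genuine braiding, namely $\mathcal{Q}^{V\#H} = 1 + O(h)$; this follows from $\mathcal{Q}=1+O(h)$ on $V$ together with the hypothesis $F_h(u\#a,v\#b)=G_h(u,a_1v)$, so that $F_h = 1 + O(h)$.

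Next I would establish the $\mathcal{QS}$-locality (\ref{eq6.7}). Following the proof of Theorem \ref{tm3.2}, compute
\begin{align*}
Y^{V\#H}(u\#a,z)Y^{V\#H}(v\#b,w)(w\#c) &= \sum Y(u,z)Y(a_1v,w)(a_2b_1w\#a_3b_2c),\\
Y^{V\#H}(a_1v\#1,w)Y^{V\#H}(u\#a_2b,z)(w\#c) &= \sum Y(a_1v,w)Y(u,z)(a_2b_1w\#a_3b_2c),
\end{align*}
so the candidate terms on the right side of (\ref{eq6.7}) are the $a^i\#1 = a_1v\#1$ and $b^i\#a_2b = u\#a_2b$. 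Now multiply the first displayed expression by $(z-w)^n$ and by the scalar $F_h(u\#a,v\#b) = G_h(u,a_1v)$; by the assumed form of $\mathcal{Q}^{V\#H}$ this scalar factors through the tensor slot, and by the hypothesis it equals $G_h(u,a_1v)$ uniformly in $a_1$, so it may be pulled inside the sum next to each term $Y(u,z)Y(a_1v,w)(\cdots)$. Since $V$ is a quantum vertex algebra, its $\mathcal{Q}$-locality (\ref{eq6.5}) gives, for each $a_1$ and mod $h^M$, an integer $n_{a_1}$ with $(z-w)^{n_{a_1}} Y(u,z)Y(a_1v,w)(\cdot)\,G_h(u,a_1v) = (z-w)^{n_{a_1}} Y(a_1v,w)Y(u,z)(\cdot)$ mod $h^M$. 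Taking $n = \max_{a_1} n_{a_1}$ over the finite sum $\Delta(a)=\sum a_1\otimes a_2$ and reassembling the tensor factors $a_2b_1w\#a_3b_2c$ yields exactly (\ref{eq6.7}).

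The translation-covariance and vacuum axioms are inherited verbatim from the field-algebra structure on $V\otimes H$ established in Lemma \ref{lm2.15} (with $s\#1$), so nothing new is needed there. The one point requiring genuine care — and the main obstacle — is the interaction between the scalar $F_h$ and the summation over $\Delta(a)$: the hypothesis of the theorem is precisely engineered so that $F_h(u\#a, v\#b)$ does not depend on $b$ and agrees with $G_h(u, a_1v)$ for every $a_1$ appearing in $\Delta(a)=\sum a_1\otimes a_2$, which is what allows the single scalar on the left of (\ref{eq6.7}) to be distributed across all terms of the sum and matched term-by-term with $\mathcal{Q}$-locality in $V$. I would state this dependence explicitly and note that without it the left side of (\ref{eq6.7}) could not be rewritten as a sum over the Sweedler components. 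Once that is isolated, the remainder is the same finite-maximum-of-exponents argument used throughout Sections 3 and 4, now performed mod $h^M$ for each fixed $M$, which is permissible since $n$ is allowed to depend on $M$ in Definition \ref{def6.7}.
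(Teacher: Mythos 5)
Your proposal is correct and follows essentially the same route as the paper: invoke the smash-product construction of Theorem \ref{tm3.2} for the field-algebra axioms, then verify weak associativity (\ref{eq6.6}) and the $\mathcal{QS}$-locality (\ref{eq6.7}) directly by expanding through the Sweedler components of $\Delta(a)$, using the finiteness of the coproduct to pick a single exponent $n$ mod $h^M$, and using the hypothesis $F_h(u\#a,v\#b)=G_h(u,a_1v)$ to distribute the braiding scalar across the sum exactly as the paper does. Your explicit remarks on $\mathcal{Q}^{V\#H}=1+O(h)$ and on why the hypothesis is indispensable are harmless additions, not a different argument.
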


\begin{proof}
By Theorem \ref{tm3.2}, we only need to check the associativity relation (\ref{eq6.6}) and $\mathcal{QS}$-locality identity (\ref{eq6.7}).

For any $u\#a, v\#b, w\#c \in V\#H$, we have
\begin{align*}
&Y^{V\#H}(u\#a,z_0)Y^{V\#H}(v\#b,z_1)w\#c \cdot \mathcal{Q}^{V\#H}(z_0-z_1)(u\#a \otimes v\#b)\\
&=Y^{V\#H}(u\#a,z_0)Y^{V\#H}(v\#b,z_1)w\#c \cdot F_h(u\#a,v\#b)\\
&=\sum Y(u,z_0)Y(a_1v, z_1)(a_2b_1w)\# a_3b_2c \cdot F_h(u\#a,v\#b)\ mod \ h^M,
\end{align*}
and
\begin{align*}
&Y^{V\#H}(a_1v\#1 ,z_1)Y^{V\#H}(u\#a_2b,z_0) w\#c\\
&=\sum Y(a_1v, z_1)Y(u,z_0)(a_2b_1w)\# a_3b_2c\ mod \ h^M,
\end{align*}
Since $\Delta(a)=\sum a_1\otimes a_2$ is a finite sum and $V$ is an $H$-module quantum vertex algebra, there is $n \in  \mathbb{Z^+}$ such that 
\begin{align*}
(z_0-z_1)^nY(u,z_0)Y(a_1v,z_1) \cdot G_h(u,a_1v)=(z_0-z_1)^nY(a_1v,z_1)Y(u,z_0)\ mod \ h^M,
\end{align*}
for all $a_1$. Together with the assumption of $\mathcal{Q}^{V\#H}$, i.e., $F_h(u\#a,v\#b)=G_h(u,a_1v)$ for all $a_1$, we can deduce that 
\begin{align*}
&(z_0-z_1)^nY^{V\#H}(u\#a,z_0)Y^{V\#H}(v\#b,z_1)w\#c \cdot \mathcal{Q}^{V\#H}(z_0-z_1)(u\#a \otimes v\#b)\\
&=(z_0-z_1)^n\sum Y^{V\#H}(a_1v\#1 ,z_1)Y^{V\#H}(u\#a_2b,z_0) w\#c \ mod \ h^M.
\end{align*}
The $\mathcal{QS}$-locality identity follows.

For any $u\#a, v\#b, w\#c \in V\#H$, we have
\begin{align*}
&Y^{V\#H}(u\#a,z_0+z_1)Y^{V\#H}(v\#b,z_1) w\#c\\
&=Y^{V\#H}(u\#a,z_0+z_1)\sum Y(v,z_1) b_1w\#b_2c\\
&=\sum Y(u,z_0+z_1)a_1 \sum Y(v,z_1) b_1w\#a_2b_2c\\
&=\sum Y(u,z_0+z_1)Y(a_1 v,z_1) a_2b_1w\#a_3b_2c,
\end{align*}
and 
\begin{align*}
&Y^{V\#H}(Y^{V\#H}(u\#a,z_0)v\#b,z_1) w\#c\\
&=Y^{V\#H}(\sum Y(u,z_0)a_1v\#a_2b,z_1) w\#c\\
&=\sum Y(Y(u,z_0)a_1v,z_1) a_2b_1w\#a_3b_2c.
\end{align*}
Since $\Delta(a)=\sum a_1\otimes a_2$ is a finite sum and $V$ is an $H$-module quantum vertex algebra, there is $n \in  \mathbb{Z^+}$ such that
\begin{align*}
(z_0+z_1)^nY(u,z_0+z_1)Y(a_1v,z_1)a_2b_1w=(z_0+z_1)^nY(Y(u,z_0)a_1v,z_1)a_2b_1w \ mod \ h^M.
\end{align*}
Hence, we can obtain that 
\begin{align*}
&(z_0+z_1)^nY^{V\#H}(u\#a,z_0+z_1)Y^{V\#H}(v\#b,z_1) w\#c\\
&=(z_0+z_1)^nY^{V\#H}(Y^{V\#H}(u\#a,z_0)v\#b,z_1) w\#c \ mod \ h^M.
\end{align*}
Thus, the associativity relation holds.

This completes the proof.
\end{proof}


%
\end{document}